\documentclass[11pt]{amsart}

\usepackage{amsmath,amssymb,latexsym,soul,cite,mathrsfs}

\usepackage{color,enumitem,graphicx}
\usepackage[colorlinks=true,urlcolor=blue,
citecolor=red,linkcolor=blue,linktocpage,pdfpagelabels,
bookmarksnumbered,bookmarksopen]{hyperref}
\usepackage[english]{babel}

\usepackage[left=2.9cm,right=2.9cm,top=2.8cm,bottom=2.8cm]{geometry}
\usepackage[hyperpageref]{backref}

\usepackage[colorinlistoftodos]{todonotes}
\makeatletter
\providecommand\@dotsep{5}
\def\listtodoname{List of Todos}
\def\listoftodos{\@starttoc{tdo}\listtodoname}
\makeatother

\numberwithin{equation}{section}





\newtheorem{theorem}{Theorem}[section]
\newtheorem{proposition}[theorem]{Proposition}
\newtheorem{lemma}[theorem]{Lemma}
\newtheorem{corollary}[theorem]{Corollary}

\newcommand\R{\mathbb R}
\newcommand\N{\mathbb N}

\begin{document}

\title[exterior domains with nonlocal Neumann condition]
{Multiplicity of solutions for a class of fractional elliptic problem with critical exponential growth and nonlocal Neumann condition}

\author{Claudianor O. Alves}
\author{C\'esar E. Torres Ledesma}

\address[Claudianor O. Alves]{\newline\indent Unidade Acad\^emica de Matem\'atica
\newline\indent 
Universidade Federal de Campina Grande,
\newline\indent
58429-970, Campina Grande - PB - Brazil}
\email{\href{mailto:coalves@mat.ufcg.edu.br}{coalves@mat.ufcg.edu.br}}

\address[C\'esar E. Torres Ledesma]
{\newline\indent Departamento de Matem\'aticas
\newline\indent 
Universidad Nacional de Trujillo
\newline\indent
Av. Juan Pablo II s/n. Trujillo-Per\'u}
\email{\href{ctl\_576@yahoo.es}{ctl\_576@yahoo.es}}

\pretolerance10000


\begin{abstract}
\noindent In this paper we consider the existence and multiplicity of weak solutions for the following class of fractional elliptic problem 
\begin{equation}\label{00}
\left\{\begin{aligned}
(-\Delta)^{\frac{1}{2}}u + u &= Q(x)f(u)\;\;\mbox{in}\;\;\R \setminus (a,b)\\
\mathcal{N}_{1/2}u(x) &= 0\;\;\mbox{in}\;\;(a,b),
\end{aligned}
\right.
\end{equation} 
where $a,b\in \R$ with $a<b$, $(-\Delta)^{\frac{1}{2}}$ denotes the fractional Laplacian operator and $\mathcal{N}_s$ is the nonlocal operator that describes the Neumann boundary condition, which is given by 
$$
\mathcal{N}_{1/2}u(x) = \frac{1}{\pi} \int_{\R\setminus (a,b)} \frac{u(x) - u(y)}{|x-y|^{2}}dy,\;\;x\in [a,b]. 
$$    

\end{abstract}

\subjclass[2010]{Primary 35A15; Secondary 35J60, 34B10} 
\keywords{Variational methods, Nonlinear elliptic equations, Nonlocal Problems }

\maketitle

\section{Introduction}

In this paper, we deal with the existence and multiplicity of weak solutions for the following class of fractional problem with nonlocal Neumann boundary condition
$$
\left\{
\begin{aligned}
(-\Delta)^{\frac{1}{2}}u + u &= Q(x)f(u)\;\;\mbox{in}\;\;\R \setminus (a,b)\\
\mathcal{N}_{1/2}u(x) &= 0\;\;\mbox{in}\;\;(a,b),\end{aligned}
\right.
\eqno{(P)}
$$
where $a,b\in \R$ with $a<b$, $\mathcal{N}_{1/2}$ denotes the non local normal derivative, defined as   
\begin{equation}\label{I01}
\mathcal{N}_{1/2}u(x) = \frac{1}{\pi} \int_{\R\setminus (a,b)} \frac{u(x) - u(y)}{|x-y|^{2}}dy,\;\;x\in [a,b],
\end{equation}
and  $(-\Delta)^{\frac{1}{2}}$ denotes the fractional Laplacian operator defined as,
\begin{equation}\label{I02}
(-\Delta)^{\frac{1}{2}}u(x) = -\frac{1}{2\pi}\int_{\R}\frac{u(x + y)-u(x-y)-2u(x)}{|y|^{2}}dy,
\end{equation}
and $f:\R \to \R$ is a smooth nonlinearity with an exponential critical growth. One of the main motivation to study this type, via variational methods in the fractional Sobolev space $H^{1/2}(\R)$ has been motivated by an interesting Trudinger-Moser
type inequality due to Lula et al. \cite{ Iu16} ( see also Ozawa \cite{Ozawa} )
\begin{equation}\label{I03}
\sup_{u\in H^{1/2}(\R), \|u\|_{1/2}\leq 1}\int_{\R}\left( e^{\pi |u|^2} - 1\right)dx < \infty.
\end{equation}

In view of (\ref{I03}), we say that $f$ has exponential critical growth at $+\infty$, if there exists $\omega \in (0,\pi]$ and $\alpha_0\in (0,\omega)$, such that 
$$
\lim_{|s|\to \infty} \frac{|f(s)|}{e^{\alpha |s|^2}} = \begin{cases}
0,&\mbox{for all}\;\;\alpha > \alpha_0,\\
+\infty,&\mbox{for all}\;\;\alpha < \alpha_0.
\end{cases}
$$
Recently, partial differential equations involving the fractional Laplacian operator $(-\Delta)^{s}$ with $s \in (0,1)$ has received a special attention, because its arises in a quite natural way in many different contexts, such as, among the others, the thin obstacle problem, optimization, finance, phase transitions, stratified materials, anomalous diffusion, crystal dislocation, soft thin films, semipermeable membranes, flame propagation, conservation laws, ultra-relativistic limits of quantum mechanics, quasi-geostrophic flows, multiple scattering, minimal surfaces, materials science and water waves, for more detail see \cite{Bucurb, Nez12, Dipierrob, Molicab, CP}.

In the last 20 years, there has been a lot of interest in the study of the existence and multiplicity of nodal solutions for nonlinear elliptic problems. Recently, the existence and multiplicity of nodal solutions for the fractional elliptic problem 
\begin{equation}\label{I04}
\left\{
\begin{aligned}
(-\Delta)^su &= f(x,u)\;\;\mbox{in}\;\;\Omega,\\
u&=0\;\;\mbox{in}\;\;\R^N \setminus \Omega,
\end{aligned}
\right.
\end{equation}
where $s\in (0,1)$ and $\Omega\subset \R^N$ is a smooth bounded domain, has been investigated by Chang and Wang \cite{Chang14}, by using the descended flow methods and harmonic extension techniques. Teng, Wang and Wang  \cite{KTKWRW} have prove the existence of nodal solutions for problem (\ref{I04}) by using the constrained minimization methods and adapting some arguments found in \cite{AlvesSouto}. We note that the main difficulties in the study of problem (\ref{I04}) is related to the presence of the fractional Laplacian $(-\Delta)^s$ which is a nonlocal operator. Indeed, the Euler-Lagrange functional associated to the problem (\ref{I04}), that is
$$
J(u) = \frac{C_{N,s}}{4}\iint_{\R^{2N}\setminus \Omega^c\times \Omega^c}\frac{|u(x) - u(y)|^2}{|x-y|^{N+2s}}dy dx - \int_{\Omega}F(x,u(x))dx 
$$
does not satisfy the decompositions 
$$
\begin{aligned}
&J(u) = J(u^+) + J(u^-)\\
&J'(u)u^{\pm} = J'(u^{\pm})u^{\pm},
\end{aligned}
$$
which were fundamental in applying the variational approach developed in \cite{BartschW05}. 

When $N=1$ and $s=\frac{1}{2}$ in (\ref{I05}), only few papers has appeared in literature; see \cite{VA, Alves16, AlvesFG, doO, Souza16, RFEL, Iannizzoto}. Indeed, one of the main difficulty in the study of this class of problems is related to the fact that the embedding $H^{1/2}(\R) \subset L^q(\R, \R)$ is continuous for all $q\in [2, \infty)$ but not in $L^\infty(\R, \R)$; see \cite{ Nez12}. This means that the maximal growth that we have to impose on the nonlinearity $f$ to deal with (\ref{I05}) via variational methods in a suitable subspace of $H^{1/2}(\R,\R)$, is given by $e^{\alpha_0 |u|^2}$ as $|u|\to \infty$ for some $\alpha_0>0$ which is a consequence of the fractional Moser-Trudinger inequality given in (\ref{I03}).

On the other hand, research has been done in recent years for the fractional elliptic problem with nonlocal Neumann condition. We mention the work by Dipierro, Ros-Oton and Valdinoci \cite{SDXREV}, where they established a complete description of the eigenvalues of $(-\Delta)^s$ with zero non local Neumann boundary condition, an existence and uniqueness result for the elliptic problem and the main properties of the fractional heat equation with this type of boundary condition. Chen \cite{Chen}, has considered the fractional Schr\"odinger equation
\begin{equation}\label{I05}
\left\{
\begin{aligned}
\epsilon^{2s}(-\Delta)^su + u &= |u|^{p-1}u\;\;\mbox{in}\;\;\Omega,\\
\mathcal{N}_su &= 0\;\;\mbox{on}\;\;\R^N \setminus \overline{\Omega},
\end{aligned}
\right.
\end{equation}
where $\epsilon >0$, $s\in (0,1)$, $\Omega\subset \R^N$ be a smooth bounded domain, $p\in (1, \frac{N+2s}{N-2s})$ and 
$$
\mathcal{N}_su(x) = C_{N,s}\int_{\Omega}\frac{u(x) - u(y)}{|x-y|^{N+2s}}dy,\;\;x\in \R^N \setminus \overline{\Omega}.
$$
By using mountain pass theorem, he showed that there exists a non-negative solution $u_\epsilon$ to (\ref{I05}). 

In the local case, i.e. $s=1$, the problem below 
\begin{equation}\label{I06}
\left\{
\begin{aligned}
-\Delta u + u &= Q(x)|u|^{p-1}u,\;\;\mbox{in}\;\;\R^N \setminus \Omega\\
\frac{\partial u}{\partial \eta} &= 0\;\;\mbox{on}\;\;\partial \Omega.
\end{aligned}
\right.
\end{equation}
has received a special attention of some authors. In \cite{VBGC}, Benci and Cerami showed that (\ref{I06}), with $Q\equiv 1$ and Dirichlet condition, has not a ground state solution, that is, there is no a solution of (\ref{I06}) with minima energy. However, Esteban in \cite{ME} proved that the same problem with Neumann condition has a ground state solution. In \cite{DC}, Cao studied the existence of positive solution for problem (\ref{I06}) by supposing that 
\begin{enumerate}
\item[$(Q'_1)$] $Q(x)\geq \tilde{Q}-Ce^{-\nu|x|}|x|^{-m}$ \quad as \quad $|x| \to +\infty$ and $\displaystyle \lim_{|x|\to +\infty}Q(x) = \tilde{Q}>0$,
\end{enumerate}
where $\nu=\frac{2(p+1)}{p-1}$, $m >N-1$ and $C>0$. In the same paper, Cao also studied the existence of solution that changes sign ( nodal solution ), by assuming the following condition on $Q$
\begin{enumerate}
	\item[$(Q'_2)$] $Q(x)\geq \tilde{Q}+Ce^{-\frac{p|x|}{p+1}}|x|^{-m}$ \quad as \quad $|x| \to +\infty$ and $\displaystyle \lim_{|x|\to +\infty}Q(x) = \tilde{Q}>0$,
\end{enumerate}
with $0<m < \frac{N-1}{2}$. In \cite{CAPCEM}, Alves, Carri\~ao and Medeiros showed that the results found in \cite{DC} also hold for the $p$-Laplacian operator and for a larger class of nonlinearity. We also mention the work by Alves \cite{CA}, where problem (\ref{I06}) was considered with a critical growth nonlinearity for $N=2$.  It is very important point out that in all the above mentioned papers the fact that the limit problem in whole $\mathbb{R}^N$ has a ground state solution with exponential decaying is a key point in their arguments, because this type of behavior at infinite works well with conditions $(Q'_1)$ and $(Q'_2)$.

Since we did not find in the literature any paper dealing with the existence of ground state and nodal solutions for problem $(P)$ in exterior domains, motivated by the previous works, we intend in the present paper to prove that $(P)$ has two nontrivial solutions, the first solution is a non-negative ground state solution while the second one is a nodal solution. However, different of  the local case $s=1$, we do not know if the  ground state solution of limit problem in whole $\mathbb{R}$ has an exponential decaying, which brings a lot of difficulties for the nonlocal case. The reader is invited to see that for the existence of nodal solution,  we overcome this difficulty by assuming more a condition on the function $Q$, see condition $(Q_2)$ and Theorem \ref{main2} below. Moreover, we prove a Lions type theorem for exterior domain that is crucial in our approach, see Proposition \ref{GSresult03} in Section 3. The main results of this paper, in some sense, complete the study made in \cite{CA} and \cite{CACT2}, because we are considering a version of those papers for the fractional Laplacian with critical exponential growth. Finally, we would like point out that in \cite{CACT}, Alves, Bisci and Torres have studied $(P)$ in exterior domain with Dirichlet boundary conditions.

In what follows, let us assume that $Q$ is a continuous function that satisfies  
\begin{enumerate}
\item[($Q_1$)] $Q(x) \geq \tilde{Q}>0$ in $\R \setminus (a,b)$ and 
$$
\lim_{|x|\to \infty}Q(x) = \tilde{Q}.
$$
\end{enumerate}
Related to the nonlinearity, we assume that $ f \in C^1(\R, \R)$, odd and verifies the following properties:  
\begin{enumerate}
\item[$(f_1)$] $|f(s)|\leq C e^{\pi |s|^2}$ for all $s\in \R$.
\item[$(f_2)$] There is $\theta >2$ such that 
$$
0< \theta F(s)\leq sf(s)\;\;\mbox{for all}\;\;s\in \R\setminus \{0\}.
$$ 
\item[$(f_3)$] There exists $q>1$ such that 
$$
\limsup_{|s|\to 0} \frac{|f(s)|}{|s|^q}< \infty.
$$ 
\item[$(f_4)$] the function $s\to \frac{f(s)}{s}$ is increasing in $(0, +\infty)$.
\item[$(f_5)$] There are constants $p>q+1$ and $C_p>0$ such that 
$$
f(s)\geq C_p s^{p-1}\;\;\mbox{for all}\;\;s\in [0, \infty),
$$
where 
$$
C_p> \left(\frac{(p-2)2\xi \theta}{(\theta -2)p} \right)^{\frac{p-2}{2}}S_{p}^{p},
$$
$$
S_p = \inf_{u\in H^{1/2}(\R) \setminus \{0\}} \frac{\|u\|_{1/2}}{\left( \int_{\R} \tilde{Q}|u|^{p}dx\right)^{1/p}}
$$
and $\xi$ is a positive constant such that the extension operator $E: H^{1/2}(\R\setminus (a,b))\to H^{1/2}(\R)$ satisfies 
$$
\|Eu\|_{1/2} \leq \xi \|u\|_{H_{\tilde{\Omega}}^{1/2}}\;\;\forall u\in H^{1/2}(\R\setminus \Omega).
$$
For more details see \cite{FDGD}. 
\end{enumerate} 

Now we are in position to state our main result concerning to the existence of ground state solution.
\begin{theorem}\label{main1}
Suppose that $(Q_1)$, $(f_1)-(f_5)$ hold. Then $(P)$ has a ground state solution.  
\end{theorem}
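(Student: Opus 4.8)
The plan is to apply the mountain pass theorem (without the Palais--Smale condition) to the energy functional associated to $(P)$, working in the fractional Hilbert space $H^{1/2}(\R\setminus(a,b))$ equipped with the natural norm coming from the Gagliardo seminorm restricted to $\R^2\setminus\big((a,b)\times(a,b)\big)$ plus the $L^2$-term, obtain a Palais--Smale sequence at the mountain pass level $c$, and then show that $c$ lies strictly below a threshold $\frac{\omega}{2\alpha_0}$ (or $\frac{\pi}{2}$ after normalization) dictated by the fractional Trudinger--Moser inequality \eqref{I03}, so that compactness can be recovered. First I would set up the functional
$$
I(u)=\frac12\|u\|^2 - \int_{\R\setminus(a,b)}Q(x)F(u)\,dx,
$$
verify using $(f_1)$, $(f_3)$ and the Trudinger--Moser inequality that $I$ is well defined and $C^1$ on the space, and check the mountain pass geometry: the origin is a strict local minimum by $(f_3)$ (which gives subcritical behaviour near zero and hence $\int Q F(u)=o(\|u\|^2)$), while $(f_2)$ (the Ambrosetti--Rabinowitz condition) forces $F$ to grow superquadratically, so $I(tu)\to-\infty$ along any fixed nonnegative direction. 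To get a \emph{non-negative} solution I would truncate $f$ by setting $f(s)=0$ for $s\le0$ (legitimate since $f$ is odd and we only seek a sign-definite solution), so that any nontrivial critical point is automatically $\ge0$ by testing the equation against the negative part.

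The key quantitative step is the energy estimate $c<\frac{\pi}{2}$ (with the normalization $\omega=\pi$, $\alpha_0<\pi$). Here is exactly where hypothesis $(f_5)$ enters: using the explicit extremal-type family for $S_p$, one pulls back a near-optimal function $u$ for $\int_\R\widetilde Q|u|^p$, extends it via the operator $E$ with norm control $\xi$, and estimates $\max_{t\ge0}I(tEu)$ from above by the quantity appearing on the right-hand side of the inequality satisfied by $C_p$; the stated lower bound on $C_p$ is precisely calibrated so that $\frac{(p-2)}{2p}\big(\frac{?}{C_p}\big)^{2/(p-2)}$-type expression drops below $\frac{\pi}{2}$. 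I would carry this out by choosing a good test path and using $(f_2)$ to bound $F$ from below and $(f_5)$ to bound it from below by $\frac{C_p}{p}|s|^p$, then optimizing in $t$.

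With a $(PS)_c$ sequence $(u_n)$ at level $c<\frac{\pi}{2}$ in hand, I would show $(u_n)$ is bounded (standard from $(f_2)$ and the $(PS)$ identity), extract a weak limit $u$, and prove $u\not\equiv0$: this is where the Lions-type concentration result for exterior domains, Proposition \ref{GSresult03}, is invoked to rule out vanishing, while the Ambrosetti--Rabinowitz condition rules out the sequence converging to zero in energy. The delicate point — and the main obstacle — is passing to the limit in the nonlinear term $\int Q f(u_n)\varphi$: one needs $f(u_n)\to f(u)$ in $L^1_{loc}$, which follows from the level restriction $c<\frac{\pi}{2}$ via a standard argument (bounding $\|u_n\|<1+\varepsilon$ eventually in a way that keeps $\int(e^{\pi r|u_n|^2}-1)$ bounded for some $r>1$ close to $1$, then using a Lieb/de Figueiredo--Miyagaki--Ruf type convergence lemma adapted to $H^{1/2}(\R)$). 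Once $u$ is shown to be a nontrivial critical point, a final minimization over the set of all nontrivial critical points (equivalently, over the Nehari manifold, using $(f_4)$ to identify the mountain pass level with the Nehari minimum) yields that $u$ is a ground state; comparing with the limit problem on all of $\R$ via $(Q_1)$ confirms the Nehari level is attained and $u\ge0$ is the desired ground state solution.
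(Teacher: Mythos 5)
Your outline captures the mountain-pass setup, the role of the Trudinger--Moser inequality, and the idea that $(f_5)$ is there to force a level estimate that keeps the exponential terms under control, but it misses the central difficulty of the exterior-domain setting and therefore has a genuine gap. Ruling out \emph{vanishing} via the Lions-type Proposition \ref{GSresult03} is not enough: since $Q(x)\to\tilde Q$ as $|x|\to\infty$, the $(PS)_{c_1}$ sequence can concentrate on balls $\Lambda(y_n,\kappa)$ with $|y_n|\to+\infty$, in which case the weak limit is $0$ even though no vanishing occurs. The paper excludes this scenario by first proving the strict comparison $0<c_1<c_\infty$ (Proposition \ref{GSresult02}), where $c_\infty$ is the mountain-pass level of the limit problem $(P_\infty)$ with $\frac12(-\Delta)^{1/2}$; this is done by translating the ground state $u_\infty$ of $(P_\infty)$ to infinity, using $(Q_1)$ ($Q\ge\tilde Q$) and a careful estimate of the terms cut out over $(a,b)$ (the ratio $s_n/t_n\to0$ argument). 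Then, in the escape case, the translated sequence $w_n(\cdot)=u_n(\cdot+y_n)$ is shown to converge weakly to a nontrivial solution $w$ of $(P_\infty)$, and Fatou's lemma gives $c_\infty\le I_\infty(w)-\tfrac12 I'_\infty(w)w\le\liminf I(u_n)=c_1<c_\infty$, a contradiction. Your proposal mentions ``comparing with the limit problem via $(Q_1)$'' only in passing at the very end, with no statement or proof of the inequality $c_1<c_\infty$ and no analysis of the translated sequence; without this, the argument that the weak limit is nontrivial does not go through.

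A second, smaller inaccuracy: the threshold you propose, $c<\frac{\pi}{2}$, is not the one actually needed here. Because the functional lives on $H_{\tilde\Omega}^{1/2}$ and the Trudinger--Moser inequality \eqref{I03} is stated on $H^{1/2}(\R)$, one must pass through the extension operator $E$ with norm constant $\xi$, and the bound that $(f_5)$ is calibrated to produce is $c_\infty<\frac{\theta-2}{4\xi^2\theta}$, which via the $(PS)$ identity yields $\limsup_n\|u_n\|_{H_{\tilde\Omega}^{1/2}}^2<\frac{1}{2\xi^2}$ and hence $\|Eu_n\|_{1/2}<\frac{1}{\sqrt2}<1$; only then does \eqref{I03} give uniform $L^t$ bounds on $e^{\pi|u_n|^2}-1$ and permit passage to the limit in $\int Q f(u_n)\varphi$. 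So the quantitative use of $(f_5)$ is to control the norm of the $(PS)$ sequence through $c_\infty$ and $\xi$, not to push the mountain-pass level below $\pi/2$ per se; with that correction, and with the missing strict inequality $c_1<c_\infty$ and the translated-sequence argument supplied, your scheme would align with the paper's proof.
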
 

In order to get a nodal solution, we assume the following additional conditions on $f$:
\begin{enumerate}
\item[($f_6$)] There exists $\sigma \geq 2$ such that 
$$
f'(s)s^2 - f(s)s\geq C|s|^{\sigma},\;\;\forall s\in \R.
$$
\item[($f_7$)] $|f'(s)s| \leq Ce^{\pi s^2}$ for all $s\in \R$ and for some positive constant $C$.
\end{enumerate}

\begin{theorem}\label{main2}
Suppose that $(f_1)-(f_7)$, $(Q_1)$, and that  there are $C>0$, $\gamma > 2p -1$, $R>|a|+|b|+1$ and $\sigma_R \in R$ with $|\sigma_R| > 3R$ such that     
$$
\displaystyle Q(x) - \tilde{Q} \geq CR^{\gamma}, \quad \forall x\in (R+\sigma_R,2R+\sigma_R). \leqno{(Q_2)}
$$ 
Then, there is $R_0>0$ such that $(P)$ has a nodal solution for all $R \geq R_0$.
\end{theorem}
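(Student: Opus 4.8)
The plan is to realize the nodal solution as a minimizer of the energy functional $I$ of $(P)$ on the nodal Nehari set
$$
\mathcal{M}=\bigl\{u\in\mathbb{X}:u^{+}\neq 0,\ u^{-}\neq 0,\ I'(u)u^{+}=0,\ I'(u)u^{-}=0\bigr\},
$$
where $\mathbb{X}$ is the natural subspace of $H^{1/2}(\R)$ in which $(P)$ is variationally well posed, so that critical points of $I$ on $\mathbb{X}$ are precisely the weak solutions of $(P)$, with $\mathcal{N}_{1/2}u=0$ appearing as the natural boundary condition. As stressed in the Introduction, the nonlocality of $(-\Delta)^{1/2}$ only gives
$$
I(u)\geq I(u^{+})+I(u^{-})\qquad\text{and}\qquad I'(u)u^{\pm}\geq I'(u^{\pm})u^{\pm},
$$
the gaps being nonnegative multiples of $\iint\frac{u^{+}(x)u^{-}(y)}{|x-y|^{2}}\,dx\,dy$, so the problem does not decouple and one has to argue as in Alves--Souto and Teng--Wang--Wang. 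Using $(f_3)$, $(f_4)$ and $(f_2)$ one shows that for each $u$ with $u^{\pm}\neq0$ there is a unique pair $(t_u,s_u)\in(0,\infty)^{2}$ with $t_uu^{+}-s_uu^{-}\in\mathcal{M}$, that it maximizes $(t,s)\mapsto I(tu^{+}-su^{-})$, and hence that $c_{*}:=\inf_{\mathcal{M}}I>0$, with $\|u^{\pm}\|_{\mathbb{X}}$ bounded away from $0$ on $\mathcal{M}$.

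Next I would prove that a minimizing sequence $(u_n)\subset\mathcal{M}$ for $c_{*}$ is compact. By $(f_2)$ it is bounded; moreover, exactly as in the proof of Theorem \ref{main1}, the explicit bound on $C_p$ in $(f_5)$ keeps $c_{*}$ strictly below the threshold dictated by the Moser--Trudinger inequality (\ref{I03}), which makes $v\mapsto\int Q f(v)v$ and $v\mapsto\int Q F(v)$ behave well along $(u_n)$. The two genuinely noncompact phenomena to rule out for the pieces $u_n^{\pm}$ are vanishing and dichotomy. Vanishing is excluded by the Lions-type result for exterior domains, Proposition \ref{GSresult03}: if $\sup_{y}\int_{B(y,r)}|u_n^{\pm}|^{2}\to0$ then $\int Q f(u_n^{\pm})u_n^{\pm}\to0$, contradicting the lower bounds on $\mathcal{M}$. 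Dichotomy --- a bump of $u_n$ remaining near $(a,b)$ while another escapes to infinity, where $Q\to\tilde Q$ --- is excluded once we prove the strict inequality
$$
c_{*}<c_{P}+c_{\infty},
$$
with $c_{P}$ the (nonnegative) ground state level of $(P)$ from Theorem \ref{main1} and $c_{\infty}$ the least energy level of the limit problem $(-\Delta)^{1/2}v+v=\tilde Q f(v)$ in $\R$; granted this, a Brezis--Lieb/splitting argument combined with Proposition \ref{GSresult03} produces a subsequence with $u_n\to u_{*}$ strongly, so $u_{*}\in\mathcal{M}$ and $I(u_{*})=c_{*}$.

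The decisive point, and the main obstacle, is the strict inequality $c_{*}<c_{P}+c_{\infty}$; this is where $(Q_2)$ and the size of $R$ are used. Condition $(Q_2)$ furnishes a far-off interval $(R+\sigma_R,2R+\sigma_R)$ (with $|\sigma_R|>3R$, hence at distance $\gtrsim R$ from $(a,b)$) on which $Q-\tilde Q\geq CR^{\gamma}$. Starting from the ground state $u_{P}\geq0$ of $(P)$ and a suitable negative function concentrated on this interval --- built from the power lower bound $f(s)\geq C_p s^{p-1}$ of $(f_5)$ --- one forms a competitor, projects it onto $\mathcal{M}$ by choosing $(t,s)\in(0,\infty)^{2}$, and estimates $I$ on it: the positive part contributes at most $c_{P}+o(1)$, while the large values of $Q$ on the interval force the Nehari scaling of the negative part to be so small that its energy is strictly below $c_{\infty}$ by a definite amount. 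The error terms --- the nonlocal interaction between the two far-apart pieces and the tail corrections coming from the merely polynomial (not exponential) decay of $u_{P}$ and of the limit ground state --- are of strictly lower order in $R$, and the requirement $\gamma>2p-1$ together with $R\geq R_0$ is precisely what is needed to make the $R^{\gamma}$ gain dominate them. Having established $c_{*}<c_{P}+c_{\infty}$, one obtains the minimizer $u_{*}\in\mathcal{M}$, and a standard deformation/Brouwer-degree argument on $\mathcal{M}$ (if $I'(u_{*})\neq0$ one deforms $u_{*}$ keeping the sign pattern and lands on an element of $\mathcal{M}$ with energy $<c_{*}$) shows $I'(u_{*})=0$; since $u_{*}^{\pm}\neq0$, $u_{*}$ changes sign and is the desired nodal solution.
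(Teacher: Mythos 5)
Your sketch of the strict inequality is in line with the paper's Lemma \ref{Nlm02}: there the level $c<c_1+c_\infty$ is obtained by projecting $\alpha u_1-\tau u_\infty(\cdot-\sigma_R)$ onto $\mathcal{M}$ via Miranda's theorem and exploiting $(Q_2)$, $(f_5)$ and the polynomial decay (\ref{decaimento}), the condition $\gamma>2p-1$ making the gain of order $R^{\gamma-2p+1}$ dominate the $R$-independent error terms; your competitor and bookkeeping are compatible with this. The genuine gap is in the compactness step. You minimize $I$ directly on $\mathcal{M}$ and assert that, once vanishing is excluded by Proposition \ref{GSresult03} and dichotomy by $c<c_1+c_\infty$, a ``Brezis--Lieb/splitting argument'' yields strong convergence of a minimizing sequence, hence attainment of $c$ and then criticality by deformation. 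But a minimizing sequence on $\mathcal{M}$ is not a Palais--Smale sequence: $\mathcal{M}$ is not a $C^1$ manifold (the constraints $u\mapsto I'(u)u^{\pm}$ are not differentiable), so it cannot be upgraded by Ekeland/Lagrange multiplier arguments, and without $I'(u_n)\to 0$ the weak limits of the translated escaping bumps are not known to solve the limit problem $(P_\infty)$. Consequently the lower bounds on which your dichotomy analysis rests --- each escaping piece costs at least $c_\infty$, a surviving piece costs at least $c_1$ --- are not justified, and this is exactly the point where the threshold $c<c_1+c_\infty$ is supposed to act.

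The paper circumvents this with a device absent from your plan: it truncates the nonlinearity by the cutoff $\varphi_\rho$, obtains for each $\rho$ a nodal minimizer $u_\rho\in\mathcal{M}_\rho$ of $I_\rho$ which is a genuine critical point (the arguments of \cite{AlvesSouto,KTKWRW} apply because the truncation restores compactness of the nonlinear term), proves $c_\rho\to c$ (Lemma \ref{Nlm03}), and then lets $\rho\to+\infty$. Since $I'_{\rho_n}(u_{\rho_n})=0$, the translated limits do solve $(P_\infty)$, and the contradiction $2c_\infty\le c<c_1+c_\infty<2c_\infty$ (using $c_1<c_\infty$ from Proposition \ref{GSresult02}) rules out the loss of the sign-changing structure in the weak limit. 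Note also that the paper never proves that $c$ is attained on $\mathcal{M}$: it only produces a sign-changing critical point, which is all the theorem asserts. Your stronger claim of strong convergence and attainment is precisely the step your proposal leaves unproved; either supply a full profile-decomposition argument valid for non-PS minimizing sequences on $\mathcal{M}$, or adopt an approximation scheme such as the paper's truncation.
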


\section{Preliminary Results}
In this section we introduce some function spaces and consider the existence of positive solution of the limit problem 
$$
\left\{
\begin{aligned}
\frac{1}{2}(-\Delta)^{1/2}u + u &= \tilde{Q}f(u)\;\;\mbox{in}\;\;\R, \\
u&\in H^{1/2}(\R).
\end{aligned}
\right.
\eqno{(P_\infty)}
$$

In general in the literature the operator that appears in the limit problem is $(-\Delta)^s$, here we have a new phenomena and we must work with the limit problem involving the operator $\frac{1}{2}(-\Delta)^s$, this is justified because in the energy functional $(P)$ appears the term 
$\frac{1}{4\pi}\iint_{\R^{2}\setminus (a,b)^2}\frac{|u(x) - u(y)|^2}{|x-y|^{2}}dy dx$, see Section 3 for more details, and in this paper we need to do some estimates involving the energy functionals of the Neumann problem and limit problem, in this sense the first part of the two functionals must be quite similar.

We recall that the fractional Sobolev space $H^{1/2}(\R)$ is defined as
$$
H^{1/2}(\R) = \left\{u\in L^2(\R):\;\;\iint_{\R^2} \frac{|u(x) - u(y)^2|}{|x-y|^{2}}dy dx < \infty \right\}
$$
endowed with the norm 
$$
\|u\|_{1/2} = \left( \int_{\R}|u|^2dx + \iint_{\R^2} \frac{|u(x) - u(y)|^2}{|x-y|^{2}}dy dx\right)^{1/2}.
$$
The square root of the Laplacian $(-\Delta)^{1/2}$, of a smooth function $u:\R \to \R$ is defined through Fourier transform by 
$$
\mathcal{F}((-\Delta)^{1/2}u)(\xi) = |\xi| \mathcal{F}(u)(\xi).
$$
By \cite[Proposition 3.6]{Nez12}, we have 
$$
\|(-\Delta)^{1/4}u\|_{L^2(\R)}^{2} := \frac{1}{2\pi}\iint_{\R^2}\frac{|u(x) - u(y)|^2}{|x-y|^{2}}dy dx,\;\;\mbox{for all}\;\;u\in H^{1/2}(\R),
$$  
and the continuous Sobolev embeddings
\begin{equation} \label{Plm01}
H^{1/2}(\R) \hookrightarrow  L^q(\R)\;\;\mbox{for every}\;\,q\in [2,\infty).
\end{equation}

In what follows, we set $\tilde{\Omega} = \R \setminus (a,b)$ and denote by $H_{\tilde{\Omega}}^{1/2}$ the fractional Sobolev space given by
$$
H_{\tilde{\Omega}}^{1/2} = \left\{u:\R \to \R\;\;\mbox{measurable and}\;\; \frac{1}{2\pi}\iint_{\R^2\setminus (a,b)^2}\frac{|u(x) - u(y)|^2}{|x-y|^{2}}dy dx + \int_{\R \setminus (a,b)}|u|^2dx < \infty \right\}
$$ 
endowed with the norm 
$$
\|u\|_{H_{\tilde{\Omega}}^{1/2}} = \left( \frac{1}{2\pi}\iint_{\R^2\setminus (a,b)^2}\frac{|u(x) - u(y)|^2}{|x-y|^{2}}dy dx + \int_{\R \setminus (a,b)} |u|^2dx\right)^{1/2}.
$$
The next lemma will be often used in the present paper
\begin{lemma}\label{embe}
\begin{enumerate}
\item Since $\R^2\setminus (a,b)^2 \subset \R^2$, we have  
$$
\iint_{\R^2\setminus (a,b)^2} \frac{|u(x) - u(y)|^2}{|x-y|^{2}}dy dx \leq \iint_{\R^2} \frac{|u(x) - u(y)|^2}{|x-y|^{2}}dy dx\;\;\mbox{for all $u\in H^{1/2}(\R)$}.
$$
Then the embedding $H^{1/2}(\R) \hookrightarrow H_{\tilde{\Omega}}^{1/2}$ is continuous.
\item Since $\R\setminus (a,b)\times \R \setminus (a,b) \subset \R^2\setminus (a,b)^2$, it follows that
$$
\int_{\R\setminus (a,b)}\int_{\R \setminus (a,b)}\frac{|u(x) - u(y)|^2}{|x-y|^2}dy dx \leq \iint_{\R^2\setminus (a,b)^2}\frac{|u(x) - u(y)|^2}{|x-y|^2}dy dx\;\;\mbox{for all $u\in H_{\tilde{\Omega}}^{1/2}$}.
$$ 
Thus, the embedding $H_{\tilde{\Omega}}^{1/2} \hookrightarrow H^{1/2}(\R\setminus (a,b))$ is continuous.
\item Note that the embedding $H^{1/2}(\R\setminus (a,b))  \hookrightarrow L^q(\R\setminus (a,b))$ is continuous for any $q\in [2, \infty)$. Furthermore, there exists $C_q>0$ such that 
$$
\|u\|_{L^q(\tilde{\Omega})} \leq C_q \|u\|_{H^{1/2}(\tilde{\Omega})}.
$$   
Combining (2) and (3), we can ensure that the embedding $H_{\tilde{\Omega}}^{1/2}  \hookrightarrow L^q(\tilde{\Omega})$ is continuous for any $q\in [2,\infty)$. Moreover, there exists a positive constant $S_q$ such that 
$$
\|u\|_{L^q(\tilde{\Omega})}\leq S_q\|u\|_{H_{\tilde{\Omega}}^{1/2}}\;\;\mbox{for all $u\in H_{\tilde{\Omega}}^{1/2}$}.
$$        
\end{enumerate}
\end{lemma}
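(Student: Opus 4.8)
The plan is to verify the three assertions in turn; each reduces either to monotonicity of an integral under shrinking of the domain of integration, or to a citation of a known fractional Sobolev embedding, so the argument is short.

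First I would dispatch (1) and (2) as pure set-inclusion estimates. For (1), since $\R^2\setminus(a,b)^2\subseteq\R^2$ and the Gagliardo integrand $|u(x)-u(y)|^2/|x-y|^{2}$ is nonnegative, restricting the region of integration can only decrease its integral; combined with $\int_{\R\setminus(a,b)}|u|^2\,dx\le\int_\R|u|^2\,dx$ (and noting that the normalization constant $\tfrac{1}{2\pi}$ in $\|\cdot\|_{H^{1/2}_{\tilde{\Omega}}}$ is $\le 1$), this gives $\|u\|_{H^{1/2}_{\tilde{\Omega}}}\le\|u\|_{1/2}$ for every $u\in H^{1/2}(\R)$, hence the continuous embedding $H^{1/2}(\R)\hookrightarrow H^{1/2}_{\tilde{\Omega}}$. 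For (2), I would use in the same way the inclusion $(\R\setminus(a,b))\times(\R\setminus(a,b))\subseteq\R^2\setminus(a,b)^2$, together with the fact that the $L^2(\R\setminus(a,b))$ parts of the two norms coincide, to obtain $\|u\|_{H^{1/2}(\R\setminus(a,b))}\le C\|u\|_{H^{1/2}_{\tilde{\Omega}}}$, and therefore the continuous embedding $H^{1/2}_{\tilde{\Omega}}\hookrightarrow H^{1/2}(\R\setminus(a,b))$.

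For (3) the key point is that $\tilde{\Omega}=\R\setminus(a,b)$ is an exterior domain with finite (hence Lipschitz) boundary, so it is an $H^{1/2}$-extension domain: there is a bounded linear extension operator $E\colon H^{1/2}(\tilde{\Omega})\to H^{1/2}(\R)$ with $(Eu)|_{\tilde{\Omega}}=u$ (see \cite{FDGD,Nez12}), which is exactly the operator invoked in $(f_5)$. Since $N=1$ and $s=\tfrac12$ we are in the critical regime $2s=N$, and \eqref{Plm01} gives $H^{1/2}(\R)\hookrightarrow L^q(\R)$ for all $q\in[2,\infty)$. Composing extension, global embedding and restriction, for $u\in H^{1/2}(\tilde{\Omega})$ we get $\|u\|_{L^q(\tilde{\Omega})}\le\|Eu\|_{L^q(\R)}\le C\|Eu\|_{1/2}\le C\xi\|u\|_{H^{1/2}(\tilde{\Omega})}$, which is the continuous embedding $H^{1/2}(\R\setminus(a,b))\hookrightarrow L^q(\R\setminus(a,b))$ with constant $C_q$. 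Finally I would chain this with (2): $\|u\|_{L^q(\tilde{\Omega})}\le C_q\|u\|_{H^{1/2}(\tilde{\Omega})}\le C_q C\|u\|_{H^{1/2}_{\tilde{\Omega}}}=:S_q\|u\|_{H^{1/2}_{\tilde{\Omega}}}$ for all $u\in H^{1/2}_{\tilde{\Omega}}$, giving $H^{1/2}_{\tilde{\Omega}}\hookrightarrow L^q(\tilde{\Omega})$ with constant $S_q$.

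There is no genuine obstacle. The only mild care needed is in (3): one should not try to prove the $L^q$-embedding directly on the unbounded set $\tilde{\Omega}$, but instead pass through the global inequality \eqref{Plm01} via the extension operator; and throughout one must keep track of the normalization factor $\tfrac{1}{2\pi}$ by which the Gagliardo seminorms in $\|\cdot\|_{1/2}$ and in $\|\cdot\|_{H^{1/2}_{\tilde{\Omega}}}$ differ, which affects only the implied constants and not the validity of the embeddings.
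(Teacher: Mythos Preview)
Your proposal is correct and follows essentially the same approach as the paper, which embeds its reasoning directly in the lemma statement: parts (1) and (2) are exactly the set-inclusion/monotonicity argument the paper sketches, and for (3) the paper simply asserts the embedding $H^{1/2}(\tilde\Omega)\hookrightarrow L^q(\tilde\Omega)$ as known, whereas you supply the standard justification via the extension operator of \cite{FDGD} and the global embedding \eqref{Plm01}. There is nothing to correct.
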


\begin{lemma}\label{lmCC}
	\cite{Souza16} If $(u_n)$ is bounded sequence in $H^{1/2}(\R)$ and 
	\begin{equation}\label{04}
	\lim_{n\to \infty}\sup_{y\in \R} \int_{B(y,\kappa)} |u_n(x)|^2dx=0,
	\end{equation}
	for some $\kappa>0$, then $u_n \to 0$ strongly in $L^p(\R)$ for $q\in (2, \infty)$.
\end{lemma}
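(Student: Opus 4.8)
The plan is to prove the convergence for a single, conveniently chosen exponent $p_0\in(2,4)$ and to recover the whole range $q\in(2,\infty)$ afterwards by interpolation. Indeed, since $(u_n)$ is bounded in $H^{1/2}(\R)$, the embedding (\ref{Plm01}) makes $(u_n)$ bounded in $L^r(\R)$ for every $r\in[2,\infty)$; hence, once $u_n\to 0$ in $L^{p_0}(\R)$ is established, for $q\in(2,p_0]$ one interpolates $\|u_n\|_{L^q(\R)}$ between the bounded norm $\|u_n\|_{L^2(\R)}$ and the vanishing one $\|u_n\|_{L^{p_0}(\R)}$, while for $q>p_0$ one interpolates between $\|u_n\|_{L^{p_0}(\R)}$ (vanishing) and $\|u_n\|_{L^r(\R)}$ with $r>q$ (bounded). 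In either case the vanishing factor appears raised to a strictly positive power, which is all that is needed.

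For $p_0$ (take for instance $p_0=3$), the first step is to fix the auxiliary exponent $r:=\frac{4}{4-p_0}\in(2,\infty)$ and $\theta:=\frac{2}{p_0}\in(0,1)$, chosen so that $\frac1{p_0}=\frac{1-\theta}{2}+\frac{\theta}{r}$ and, crucially, $\theta p_0=2$. Cover $\R$ by the balls $B_i:=B(i\kappa,\kappa)$, $i\in\mathbb{Z}$, which cover $\R$ and whose overlap, as well as that of $\{B_i\times B_i\}$ in $\R^2$, is bounded by an absolute constant $N_\kappa$. On each $B_i$, Hölder's inequality gives $\|u_n\|_{L^{p_0}(B_i)}\le\|u_n\|_{L^2(B_i)}^{1-\theta}\|u_n\|_{L^r(B_i)}^{\theta}$, and the fractional Sobolev embedding $H^{1/2}(B(0,\kappa))\hookrightarrow L^r(B(0,\kappa))$, transported to each $B_i$ by translation with a constant $C$ independent of $i$, gives $\|u_n\|_{L^r(B_i)}\le C\|u_n\|_{H^{1/2}(B_i)}$. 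Because $\theta p_0=2$, these combine into
\[
\int_{B_i}|u_n|^{p_0}\,dx\;\le\;C^{2}\left(\int_{B_i}|u_n|^2\,dx\right)^{\frac{p_0-2}{2}}\,\|u_n\|_{H^{1/2}(B_i)}^{2},
\]
with exponent $\frac{p_0-2}{2}>0$.

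Summing over $i\in\mathbb{Z}$ then finishes the argument: the left-hand sides add to at least $\int_{\R}|u_n|^{p_0}\,dx$ since the $B_i$ cover $\R$, and on the right-hand side one extracts the factor $\big(\sup_{y\in\R}\int_{B(y,\kappa)}|u_n|^2\,dx\big)^{(p_0-2)/2}$ and controls the remaining sum $\sum_i\|u_n\|_{H^{1/2}(B_i)}^2$ — i.e. $\sum_i\int_{B_i}|u_n|^2\,dx$ plus a multiple of $\sum_i\iint_{B_i\times B_i}\frac{|u_n(x)-u_n(y)|^2}{|x-y|^2}\,dy\,dx$ — by a universal multiple of $\|u_n\|_{1/2}^2$, using the bounded overlap of $\{B_i\}$ and $\{B_i\times B_i\}$. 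Since $\sup_n\|u_n\|_{1/2}<\infty$ and $\frac{p_0-2}{2}>0$, hypothesis (\ref{04}) forces $\int_{\R}|u_n|^{p_0}\,dx\to 0$. I expect the delicate point to be precisely the exponent bookkeeping in the displayed estimate: interpolating $L^{p_0}$ locally between $L^2$ and a higher Lebesgue space by Hölder alone (without invoking the Gagliardo seminorm) only reproduces the already known $L^{p_0}(\R)$-boundedness of $(u_n)$, with no surviving vanishing factor, so the regularity carried by the bound on $\|u_n\|_{1/2}$ must genuinely be used, and it must enter with a power (here $2$) for which the nonlocal seminorm sums correctly over the overlapping cover, while the local $L^2$-mass still appears with a strictly positive power. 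The constraint $p_0<4$ that this balance imposes is harmless thanks to the interpolation reduction of the first paragraph.
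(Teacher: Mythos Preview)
The paper does not prove this lemma; it simply quotes it from \cite{Souza16}, so there is no proof in the paper to compare against. Your argument is correct and is precisely the classical Lions concentration--compactness argument, adapted to $H^{1/2}(\R)$: cover $\R$ by overlapping intervals, use the local interpolation inequality and the local fractional Sobolev embedding to produce, on each interval, an estimate in which the local $L^2$-mass appears with a strictly positive exponent and the $H^{1/2}$-norm appears squared, then sum using bounded overlap. Your bookkeeping with $\theta=2/p_0$ and $r=4/(4-p_0)$ is exactly what makes the summation work, and the final interpolation step to recover all $q\in(2,\infty)$ is routine. One cosmetic remark: your covering $B_i=B(i\kappa,\kappa)$ has overlap number exactly $2$, so you may replace the abstract constant $N_\kappa$ by $2$ throughout; and the ``delicate point'' you flag at the end is handled correctly by your own choice of exponents, so the paragraph of self-criticism is unnecessary.
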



Next, we will recall and prove some technical results involving exponential critical growth.
\begin{lemma}\label{lm03}
\cite{Souza16} Let $\alpha >0$ and $r>1$. Then for each $\beta >r$ there exists $C=C(\beta)>0$ such that
\begin{equation*}\label{03}
\left(e^{\alpha |s|^2} - 1\right)^r \leq C\left(e^{\alpha \beta |s|^2}-1\right).
\end{equation*}
\end{lemma}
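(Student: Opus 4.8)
The plan is to remove $\alpha$ and $s$ from the inequality by the substitution $t = e^{\alpha|s|^2}-1$. As $|s|$ ranges over $[0,\infty)$, the quantity $t$ ranges over all of $[0,\infty)$, and since $e^{\alpha\beta|s|^2} = \bigl(e^{\alpha|s|^2}\bigr)^{\beta} = (1+t)^{\beta}$, the claimed bound is equivalent to the one-variable statement
$$
t^{r} \leq C\bigl((1+t)^{\beta} - 1\bigr) \qquad \text{for all } t \geq 0,
$$
in which neither $\alpha$ nor $s$ appears; in particular this already explains why the constant $C$ may be chosen depending only on $\beta$ (and $r$).

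To prove this one-variable inequality I would split into the regimes $0 \leq t \leq 1$ and $t \geq 1$, noting first that $\beta > r > 1$ so $\beta \geq 1$. For $t \geq 1$, apply the superadditivity inequality $(a+b)^{\beta} \geq a^{\beta} + b^{\beta}$ (valid for $a,b \geq 0$ and $\beta \geq 1$) with $a=1$, $b=t$, to get $(1+t)^{\beta} - 1 \geq t^{\beta} \geq t^{r}$, the last step because $t \geq 1$ and $\beta \geq r$; so the inequality holds with constant $1$ there. For $0 \leq t \leq 1$, use $t^{r} \leq t$ (since $r>1$) together with $(1+t)^{\beta} - 1 \geq \beta t$ (from the mean value theorem applied to $x \mapsto x^{\beta}$ on $[1,1+t]$, since $\xi^{\beta-1}\geq 1$ for $\xi \geq 1$), which gives $t^{r} \leq t \leq \tfrac{1}{\beta}\bigl((1+t)^{\beta}-1\bigr)$. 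Taking $C = \max\{1,\ 1/\beta\}$ then covers all $t \geq 0$, and tracing back through the substitution yields the lemma with an explicit constant.

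Alternatively, and with essentially no computation, one may observe that $g(t) = t^{r}/\bigl((1+t)^{\beta}-1\bigr)$ is continuous and positive on $(0,\infty)$, that $g(t)\to 0$ as $t\to 0^{+}$ (because $r>1$, so $t^{r}$ vanishes faster than the linear term $\beta t$ controlling the denominator from below), and that $g(t)\to 0$ as $t\to\infty$ (because $\beta>r$, so the denominator grows like $t^{\beta}$); hence $g$ is bounded on $(0,\infty)$ by some $C=C(\beta)$, which is the desired constant. There is no genuine obstacle in this lemma: the only thing to watch is that both limiting behaviours of $g$ are zero, and this is precisely where the hypotheses $r>1$ and $\beta>r$ are used.
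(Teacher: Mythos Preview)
Your proof is correct. The substitution $t=e^{\alpha|s|^2}-1$ cleanly reduces the statement to a one-variable inequality, and both of your arguments for bounding $t^{r}/\bigl((1+t)^{\beta}-1\bigr)$ are valid; in fact, since $\beta>r>1$ you have $\max\{1,1/\beta\}=1$, so the explicit constant from your first argument is simply $C=1$.

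As for comparison with the paper: the paper does not supply its own proof of this lemma. It is merely quoted from \cite{Souza16} and then used as a tool (for instance in Lemma~\ref{lm04} and Proposition~\ref{GSresult04}). Your write-up therefore fills in a genuine gap rather than duplicating anything already present, and either of the two arguments you give would be perfectly adequate for that purpose.
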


\begin{lemma}\label{lm04}
If $p>2$ and $u\in H^{1/2}(\R)$. Then there exists $C>0$, such that 
$$
\int_{\R}\left( e^{\pi |u|^2} - 1\right)|u|^pdx \leq C\|u\|_{1/2}^{p}.
$$
\end{lemma}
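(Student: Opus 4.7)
The plan is to combine H\"older's inequality with the sharp fractional Trudinger--Moser inequality (\ref{I03}) and Lemma \ref{lm03}, exactly as in the standard Trudinger--Moser toolkit. The statement is understood in the usual sense that $C$ may depend on a fixed upper bound for $\|u\|_{1/2}$; concretely, we work with $u$ in a small enough ball of $H^{1/2}(\R)$ so that $\pi\beta\|u\|_{1/2}^{2}<\pi$ for a suitable $\beta>1$.

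First I would pick $r>1$ very close to $1$, let $r'=r/(r-1)$, and apply H\"older to split
\begin{equation*}
\int_{\R}\bigl(e^{\pi|u|^{2}}-1\bigr)|u|^{p}\,dx\ \leq\ \left(\int_{\R}\bigl(e^{\pi|u|^{2}}-1\bigr)^{r}\,dx\right)^{1/r}\!\left(\int_{\R}|u|^{pr'}\,dx\right)^{1/r'}.
\end{equation*}
For the second factor, $pr'>2$, so the continuous embedding \eqref{Plm01} gives $\|u\|_{L^{pr'}(\R)}^{p}\leq C\|u\|_{1/2}^{p}$, which already produces the desired power $\|u\|_{1/2}^{p}$ on the right-hand side.

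Next I would handle the exponential factor. Choose $\beta>r$ (still close to $1$). Lemma \ref{lm03} with $\alpha=\pi$ yields
\begin{equation*}
\bigl(e^{\pi|u|^{2}}-1\bigr)^{r}\ \leq\ C\bigl(e^{\pi\beta|u|^{2}}-1\bigr),
\end{equation*}
so it suffices to control $\int_{\R}(e^{\pi\beta|u|^{2}}-1)\,dx$. Writing $v=u/\|u\|_{1/2}$ (so $\|v\|_{1/2}=1$) and imposing $\|u\|_{1/2}\leq M$ with $\beta M^{2}\leq 1$, I get
\begin{equation*}
\int_{\R}\bigl(e^{\pi\beta|u|^{2}}-1\bigr)\,dx\ =\ \int_{\R}\bigl(e^{\pi(\beta\|u\|_{1/2}^{2})|v|^{2}}-1\bigr)\,dx\ \leq\ \int_{\R}\bigl(e^{\pi|v|^{2}}-1\bigr)\,dx\ \leq\ K,
\end{equation*}
by the Trudinger--Moser inequality \eqref{I03}, with $K$ independent of $u$. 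Raising to the power $1/r$ gives a uniform bound on the first H\"older factor, and combining with the embedding bound closes the argument.

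The only subtle point is the admissibility of parameters: one must have $r>1$, $\beta>r$ and $\beta\|u\|_{1/2}^{2}\leq 1$ simultaneously. Since $r$ and $\beta$ can be taken arbitrarily close to $1$, this is possible as long as one restricts to a ball of radius strictly less than $1$ in $H^{1/2}(\R)$, which is precisely the range in which the lemma will be invoked in the variational arguments below. In particular, the Hardy--Littlewood--Sobolev threshold $\pi$ in (\ref{I03}) is sharp here, so the assumption $\|u\|_{1/2}$ suitably small is unavoidable, and this is the only genuine constraint one has to track when using the lemma elsewhere in the paper.
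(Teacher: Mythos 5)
Your proposal is correct and follows essentially the same route as the paper: H\"older's inequality with $r>1$ close to $1$, Lemma \ref{lm03} to pass from $(e^{\pi|u|^2}-1)^r$ to $e^{\pi\beta|u|^2}-1$, the Trudinger--Moser inequality (\ref{I03}) after normalizing by $\|u\|_{1/2}$, and the embedding (\ref{Plm01}) for the $L^{pr'}$ factor. The smallness restriction $\beta\|u\|_{1/2}^{2}<1$ that you make explicit is also present, tacitly, in the paper's proof (it chooses $r>1$ with $r\|u\|_{1/2}^{2}<1$), so your reading of the statement matches how the lemma is actually used.
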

\begin{proof}
Consider $r>1$ close to $1$ such that 
$$
r \|u\|_{1/2}^{2}< 1\;\;\mbox{and}\;\;r'p\geq 2\;\;\mbox{with}\;\;r' = \frac{r}{r-1}
$$ 
Let $\beta >r$ close to $r$, such that $\beta \|u\|_{1/2}^2 < 1$. Then by H\"older inequality, (\ref{I03}), (\ref{Plm01}) and Lemma \ref{lm03}, 
$$
\begin{aligned}
\int_{\R}\left(e^{\pi |u|^2} - 1 \right)|u|^pdx &\leq \left(\int_{\R}\left( e^{\pi |u|^2} - 1\right)^{r} dx\right)^{1/r}\|u\|_{L^{r'p}(\R)}^{p}\\
&\leq C^{1/r} \left(\int_{\R} \left( e^{\pi \beta |u|^2} - 1\right)dx\right)^{1/r}\|u\|_{L^{r'p}(\R)}^{p}\\
&\leq C^{1/r}C_{r'p}^{p} \left(\int_{\R} \left(e^{\pi \beta \|u\|_{1/2}^{2} \left| \frac{u}{\|u\|_{1/2}}\right|^2} - 1\right)dx \right)^{1/r} \|u\|_{1/2}^{p}\\
&\leq \tilde{C}\|u\|_{1/2}^{p}.
\end{aligned}
$$ 
\end{proof}

Arguing as Alves \cite{CA16}, we can get the next two results  
\begin{lemma}\label{PLlm04}
Let $(u_n) \subset H^{1/2}(\R)$ be a sequence such that
\begin{equation}\label{05}
\limsup_{n\to \infty} \|u_n\|_{1/2}^{2} < 1.
\end{equation}
Then, there exists $t>1$ close to $1$ and $C>0$ such that 
\begin{equation}\label{06}
\int_{\R} \left( e^{\pi |u_n|^2 }-1\right)^tdx \leq C\;\;\mbox{for all}\;\;n\in \N.
\end{equation}
\end{lemma}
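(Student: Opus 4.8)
The plan is to exploit the hypothesis \eqref{05} to interpolate between a uniform control of the $H^{1/2}$-norms and the sharp Trudinger--Moser inequality \eqref{I03}. First I would use \eqref{05} to produce, for all $n$ large, a uniform bound $\|u_n\|_{1/2}^2 \leq m < 1$; the finitely many small-index terms are harmless and can be absorbed into the constant $C$. Then choose $t>1$ sufficiently close to $1$ and a further auxiliary exponent $\beta>t$ close to $t$ so that $t\beta\, m < 1$ still holds — this is possible precisely because $m<1$ strictly, which leaves room for two successive enlargements of the exponent.

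Next I would apply Lemma \ref{lm03} with $\alpha=\pi$, $r=t$ and this $\beta$, giving the pointwise inequality
$$
\left(e^{\pi |u_n|^2}-1\right)^t \leq C\left(e^{\pi\beta |u_n|^2}-1\right),
$$
and integrate over $\R$. For the resulting right-hand side I would normalize: writing $v_n = u_n/\|u_n\|_{1/2}$, so that $\|v_n\|_{1/2}\le 1$, we have $\pi\beta |u_n|^2 = \pi\beta\|u_n\|_{1/2}^2 |v_n|^2 \leq (\pi\beta m)\,|v_n|^2$. Since $\pi\beta m<\pi$ and (after the choices above) even $t\beta m<1$ — here it suffices that $\pi\beta m<\pi$, i.e. $\beta m<1$ — the function $e^{\pi\beta m |v_n|^2}-1 \leq e^{\pi |v_n|^2}-1$ pointwise when $\beta m\le 1$, and a monotonicity/convexity argument (or simply $e^{\lambda\tau}-1\le \lambda(e^{\tau}-1)$ for $\lambda\in(0,1]$, $\tau\ge 0$, applied with $\tau=\pi|v_n|^2$, $\lambda=\beta m$) bounds $\int_{\R}(e^{\pi\beta|u_n|^2}-1)\,dx$ by a constant times $\sup_{\|v\|_{1/2}\le 1}\int_{\R}(e^{\pi|v|^2}-1)\,dx$, which is finite by \eqref{I03}. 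Combining the two steps yields \eqref{06} with a constant independent of $n$.

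The main obstacle — and the only delicate point — is the bookkeeping of the three exponents $t$, $\beta$, $m$: one must verify that the strict inequality $\limsup\|u_n\|_{1/2}^2<1$ genuinely provides enough slack to pick $t>1$ and $\beta>t$ with $\beta m<1$ (equivalently $t\beta m<1$ for the stronger form), so that after raising to the power $t$ via Lemma \ref{lm03} and scaling out $\|u_n\|_{1/2}^2$ the exponent landing in the Trudinger--Moser supremum is still at most $\pi$. Everything else is a routine application of Lemma \ref{lm03}, the elementary inequality $e^{\lambda\tau}-1\le\lambda(e^\tau-1)$, and \eqref{I03}; I do not expect any difficulty beyond this calibration of constants.
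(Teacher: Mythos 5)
Your argument is correct and is essentially the paper's own route (the same computation appears explicitly in the proofs of Propositions \ref{GSresult04} and \ref{GSresult05}): fix $m<1$ with $\|u_n\|_{1/2}^2\le m$ for large $n$, apply Lemma \ref{lm03} with exponents $1<t<\beta$ chosen so that $\beta m<1$, normalize by $\|u_n\|_{1/2}$ and invoke the Trudinger--Moser bound (\ref{I03}). Your calibration of $t,\beta,m$ is exactly the needed one, and the finitely many initial indices are indeed harmless since $\int_{\R}\left(e^{\alpha u^2}-1\right)dx<\infty$ for each fixed $u\in H^{1/2}(\R)$ and $\alpha>0$.
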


\begin{corollary}\label{PLcor05}
Let $(u_n) \subset H^{1/2}(\R)$ be a sequence satisfying (\ref{05}). If $u_n \rightharpoonup u$ in $H^{1/2}(\R)$ and $u_n(x) \to u(x)$ a.e. in $\R$, as $n\to \infty$, then,
\begin{equation}\label{07}
F(u_n(x)) \to F(u(x))\;\;\mbox{in}\;\;L^1(-T,T),
\end{equation} 
\begin{equation}\label{08}
f(u_n(x))u_n(x) \to f(u(x))u(x)\;\;\mbox{in}\;\;L^1(-T,T),
\end{equation} 
\begin{equation}\label{09}
\int_{-R}^{R}f(u_n(x))\varphi (x)dx \to \int_{-R}^{R}f(u(x))\varphi (x)dx,
\end{equation} 
as $n\to \infty$ for all $\varphi \in H^{1/2}(\R)$ and $T>0$. In particular, if $\varphi \in C_0^{\infty}(\R)$ we have 
\begin{equation}\label{09*}
\int_{\R}f(u_n(x))\varphi (x)dx \to \int_{\R}f(u(x))\varphi (x)dx.
\end{equation} 

\end{corollary}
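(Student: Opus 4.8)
\textbf{Proof proposal for Corollary \ref{PLcor05}.}

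The plan is to combine the uniform integrability estimate of Lemma \ref{PLlm04} with a generalized Lebesgue dominated convergence argument (the Vitali convergence theorem / a De La Vall\'ee-Poussin type criterion) on the bounded interval $(-T,T)$. First I would fix $T>0$ and record that, by \eqref{05}, we may pick $t>1$ close to $1$ with $\limsup_n t\|u_n\|_{1/2}^2<1$, so Lemma \ref{PLlm04} gives a constant $C>0$ with $\int_{\R}(e^{\pi|u_n|^2}-1)^t\,dx\le C$ for all $n$. From the growth hypotheses $(f_1)$ and $(f_2)$ one obtains, via the usual integration of $\theta F(s)\le sf(s)$, the pointwise bounds $|F(s)|\le C(e^{2\pi|s|^2}-1)$ and $|sf(s)|\le C(e^{2\pi|s|^2}-1)$ for $|s|$ large, hence globally $|F(s)|+|sf(s)|\le C|s|^2+C(e^{2\pi|s|^2}-1)$ (adjusting constants, and shrinking $\pi$ to $\pi$ after using Lemma \ref{lm03} if the exponent picked up a factor — this is where $\alpha_0<\omega\le\pi$ matters).

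Next I would show the sequences $\{F(u_n)\}$ and $\{f(u_n)u_n\}$ are uniformly integrable on $(-T,T)$. Given a measurable $A\subset(-T,T)$, H\"older's inequality with exponents $t$ and $t'$ yields
$$
\int_A\big(e^{2\pi|u_n|^2}-1\big)\,dx\le\Big(\int_{\R}\big(e^{2\pi|u_n|^2}-1\big)^t\,dx\Big)^{1/t}|A|^{1/t'}\le C^{1/t}|A|^{1/t'},
$$
where I apply Lemma \ref{lm03} to absorb the factor $2$ in the exponent into the constant (choosing $t$ slightly larger so that $2\pi t<\pi\beta$ with $\beta\|u_n\|_{1/2}^2<1$ is not automatic — more carefully, one runs Lemma \ref{PLlm04} directly with exponent $2\pi$ in place of $\pi$, which is legitimate provided $\limsup_n 2\|u_n\|_{1/2}^2<1$; if this fails one must instead split $u_n$ and use that only the ``infinite tail'' part has small norm, which is the standard Lions-type trick). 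Together with the $L^2$ bound on $(u_n)$ (which follows from the norm bound and compactly supported testing, or simply from \eqref{05}), this gives: for every $\varepsilon>0$ there is $\delta>0$ so that $|A|<\delta$ implies $\int_A(|F(u_n)|+|f(u_n)u_n|)\,dx<\varepsilon$ for all $n$. Since $u_n(x)\to u(x)$ a.e. and $F,f$ are continuous, $F(u_n)\to F(u)$ and $f(u_n)u_n\to f(u)u$ a.e. on $(-T,T)$; the Vitali convergence theorem on the finite-measure set $(-T,T)$ then yields \eqref{07} and \eqref{08}.

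For \eqref{09} I would argue analogously: for fixed $\varphi\in H^{1/2}(\R)$ and $R>0$, the functions $f(u_n)\varphi$ converge a.e. on $(-R,R)$ to $f(u)\varphi$, and for uniform integrability I estimate $\int_A|f(u_n)\varphi|\,dx$ by H\"older with three exponents: the exponential part $(e^{2\pi|u_n|^2}-1)$ in $L^t$ (bounded by Lemma \ref{PLlm04}-type control), $\varphi$ in $L^{s}(-R,R)$ for a suitable $s<\infty$ (finite by the embedding \eqref{Plm01}), and the remaining factor controlling $|A|^{\text{positive power}}$; this shows $\{f(u_n)\varphi\}$ is uniformly integrable on $(-R,R)$, and Vitali applies again. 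Finally \eqref{09*} is immediate from \eqref{09}: if $\varphi\in C_0^\infty(\R)$ then $\mathrm{supp}\,\varphi\subset(-R,R)$ for some $R$, and outside this interval the integrands vanish identically. The main obstacle is the bookkeeping in the exponential estimate: one needs to ensure that after absorbing the factor $2$ (coming from $(f_1)$--$(f_2)$, i.e. from $F(s)\lesssim e^{2\pi|s|^2}$) into Lemma \ref{lm03}, the resulting exponent is still controlled by a power of $e^{\pi|u_n|^2}-1$ whose $L^t$ norm stays bounded under \eqref{05} — this is exactly the point where the strict inequality $\limsup_n\|u_n\|_{1/2}^2<1$ is used, possibly together with a splitting of $u_n$ into a compactly supported piece and a small-norm tail if a naive choice of exponents is too large.
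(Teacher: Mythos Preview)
Your approach via uniform integrability and the Vitali convergence theorem is correct and is the standard argument the paper has in mind when it writes ``Arguing as Alves \cite{CA16}'' (no explicit proof is given in the paper itself).

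That said, you create an unnecessary obstacle with the ``factor of $2$'' in the exponent. There is no need to bound $|sf(s)|$ by $C(e^{2\pi|s|^2}-1)$. From $(f_1)$ and $(f_3)$ one has directly
\[
|f(s)|\le \epsilon|s|+C_\epsilon\bigl(e^{\pi|s|^2}-1\bigr)
\]
(this is exactly the estimate \eqref{01} used later in the paper), hence $|f(s)s|\le \epsilon|s|^2+C_\epsilon|s|(e^{\pi|s|^2}-1)$, and similarly for $F$. For the term $|u_n|(e^{\pi|u_n|^2}-1)$ on a measurable set $A\subset(-T,T)$, a three-way H\"older inequality with exponents $r_1$, $t$, $r_3$ satisfying $1/r_1+1/t+1/r_3=1$ --- where $t>1$ is the exponent from Lemma \ref{PLlm04} and $r_1\ge 2$ is any exponent for which $\|u_n\|_{L^{r_1}}$ is bounded via the embedding \eqref{Plm01} --- gives a bound of the form $C|A|^{1/r_3}$, which is precisely the uniform integrability you need. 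The same trick handles \eqref{09}: bound $|f(u_n)\varphi|$ by $\epsilon|u_n||\varphi|+C_\epsilon|\varphi|(e^{\pi|u_n|^2}-1)$ and apply H\"older with $\varphi\in L^{r_1}(-R,R)$. No splitting of $u_n$ into compact and tail pieces, and no hypothesis stronger than $\limsup_n\|u_n\|_{1/2}^2<1$, is required.
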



Associated to problem $(P_\infty)$, we have the functional $I_\infty : H^{1/2}(\R) \to \R$ defined as 
\begin{equation}\label{10}
I_\infty (u) = \frac{1}{4\pi}\iint_{\R^{2}} \frac{|u(x) - u(y)|^2}{|x-y|^{2}}dy dx +\frac{1}{2} \int_{\R}|u|^2dx - \int_{\R}\tilde{Q}F(u)dx.
\end{equation}
It is standard to show that $I_\infty \in C^1(H^{1/2}(\R), \R)$ with  
\begin{equation}\label{11}
I'_\infty(u)v = \frac{1}{2\pi}\iint_{\R^2} \frac{[u(x) - u(y)][v(x) - v(y)]}{|x-y|^{2}}dy dx + \int_{\R}u(x)v(x)dx - \int_{\R} \tilde{Q}f(u(x))v(x)dx
\end{equation}
for all $u,v \in H^{1/2}(\R)$. 

We start our analysis recalling that $I_{\infty}$ satisfies the mountain pass geometry 
\begin{lemma}\label{Plm02}
The functional $I_{\infty}$ satisfies the following conditions:
\begin{enumerate}
\item[(i)] There exist $\beta, \delta>0$, such that $I_{\infty}(u) \geq \beta$ if $\|u\|_{1/2} = \delta$.
\item[(ii)] There exists $e\in H^{1/2}(\mathbb{R}^N)$ with $\|e\|_{1/2} > \delta$ such that $I_{\infty}(e) <0$.
\end{enumerate}
\end{lemma}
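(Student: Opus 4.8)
The plan is to verify conditions (i) and (ii) separately; both reduce to standard mountain--pass estimates once the nonlinear term $\int_\R\tilde Q F(u)\,dx$ is controlled near the sphere $\|u\|_{1/2}=\delta$.

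For (i), I would start from the expression (\ref{10}) and use that, because $\tfrac{1}{4\pi}<\tfrac12$,
$$\frac{1}{4\pi}\iint_{\R^2}\frac{|u(x)-u(y)|^2}{|x-y|^2}\,dy\,dx+\frac12\int_\R|u|^2\,dx\ \ge\ \frac{1}{4\pi}\|u\|_{1/2}^2 .$$
To handle $F$, I would combine $(f_1)$ and $(f_3)$ to obtain, for every $\epsilon>0$, a constant $C_\epsilon>0$ with
$$|F(s)|\le \frac{\epsilon}{2}\,|s|^2+C_\epsilon\,|s|^{q+1}\bigl(e^{\pi|s|^2}-1\bigr),\qquad s\in\R,$$
the quadratic term absorbing $F$ near the origin (note $q+1>2$ by $(f_3)$) and the second term dominating it away from the origin by $(f_1)$. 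Integrating, using $\|u\|_{L^2}\le\|u\|_{1/2}$ and Lemma \ref{lm04} with $p=q+1$, I would get for every $u$ with $\|u\|_{1/2}=\delta$ (with $\delta<1$ fixed from the outset)
$$I_\infty(u)\ \ge\ \Bigl(\frac{1}{4\pi}-\frac{\tilde Q\,\epsilon}{2}\Bigr)\delta^2-C\,\delta^{q+1}.$$
Choosing $\epsilon$ so small that $\kappa:=\tfrac{1}{4\pi}-\tfrac{\tilde Q\epsilon}{2}>0$, and then $\delta$ so small that $C\delta^{q-1}\le\tfrac{\kappa}{2}$, yields $I_\infty(u)\ge\tfrac{\kappa}{2}\delta^2=:\beta>0$ on $\|u\|_{1/2}=\delta$, which is (i).

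For (ii), I would fix $u_0\in C_0^\infty(\R)\setminus\{0\}$ with $u_0\ge 0$ and test $I_\infty$ along the ray $t\mapsto tu_0$, $t>0$. From $(f_2)$ one has $F(s)>0$ for $s\neq 0$, and integrating $\tfrac{f(s)}{F(s)}\ge\tfrac{\theta}{s}$ over $(1,s)$ gives $F(s)\ge F(1)\,s^\theta$ for $s\ge1$. Since $u_0\ge c_0>0$ on a set $A$ of positive measure, for $t$ large enough
$$\int_\R\tilde Q\,F(tu_0)\,dx\ \ge\ \tilde Q\,F(1)\,c_0^\theta\,|A|\,t^\theta,$$
so that $I_\infty(tu_0)\le C_1 t^2-C_2 t^\theta$ with $C_2>0$. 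As $\theta>2$, this tends to $-\infty$ as $t\to\infty$; picking $t^\ast$ large with $I_\infty(t^\ast u_0)<0$ and $\|t^\ast u_0\|_{1/2}>\delta$ and setting $e=t^\ast u_0$ gives (ii).

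I do not anticipate a real obstacle. The only point requiring care is that the constant in Lemma \ref{lm04} is uniform only when the $H^{1/2}$--norm stays below $1$ (its proof needs $\|u\|_{1/2}<1$): hence in (i) one must choose $\delta<1$ before selecting the auxiliary exponents used in that lemma, after which all constants appearing on the sphere $\|u\|_{1/2}=\delta$ are uniform. The remaining computations are routine.
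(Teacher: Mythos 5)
Your argument is correct: the paper states Lemma \ref{Plm02} without proof (it is the standard mountain--pass geometry), and your verification is exactly the argument the paper implicitly relies on, using the splitting $|F(s)|\le \frac{\epsilon}{2}|s|^2+C_\epsilon|s|^{q+1}\bigl(e^{\pi|s|^2}-1\bigr)$ (the analogue of (\ref{GS05})) together with Lemma \ref{lm04} for (i), and the Ambrosetti--Rabinowitz condition $(f_2)$ for (ii). You also correctly flag the one delicate point, namely that Lemma \ref{lm04} yields a constant uniform on the sphere only after fixing $\delta<1$ so that the Moser--Trudinger bound (\ref{I03}) applies, so nothing is missing.
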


\noindent 
Let $\Gamma_{\infty} = \{\gamma \in C([0,1], H^{1/2}(\mathbb{R})):\;\;\gamma (0) = 0, I_{\infty}(\gamma (1))<0\}$, from Lemma \ref{Plm02}, the mountain pass level
$$
c_\infty = \inf_{\gamma \in \Gamma_{\infty}} \sup_{t\in [0,1]} I_{\infty}(\gamma (t)) \geq \beta >0,
$$
is well defined, and the equality below holds 
\begin{equation}\label{12}
c_{\infty}= \inf_{u\in \mathcal{N}_{\infty}} I_{\infty}(u)
\end{equation}
where 
$$
\mathcal{N}_{\infty} = \{u\in H^{1/2}(\mathbb{R})\setminus \{0\}:\;\;I'_{\infty}(u)u=0\}
$$ 
is the Nehari manifold associated to $(P_\infty)$. 

We wold like point out that the arguments explored in \cite[Theorem 1.5]{PFAQJT} still holds for $N=1$ and $\alpha=1/2$, hence the ground state solution $u_\infty \in H^{s}(\mathbb{R}^N)$ satisfies the estimate below  
\begin{equation} \label{decaimento}
0<\frac{C_1}{|x|^{2}} \leq u_\infty(x) \leq \frac{C_2}{|x|^{2}}, \quad \mbox{for all} \quad |x| \geq 1.  
\end{equation}
For the case where $f$ is a power, we cite the paper \cite[Proposition 3.1]{FLS}

We recall that by a ground state we understand by a function $u_\infty \in H^{1/2}(\mathbb{R})$ satisfying 
$$
I_\infty(u_\infty)=c_\infty \quad \mbox{and} \quad I'_\infty(u_\infty)=0.
$$

\section{Proof of Theorem \ref{main1}}
In this section, we are going to prove Theorem \ref{main1}. We start our analysis by state a version of a Lions type lemma that is crucial in our approach, whose the proof follows as in \cite[Proposition 3.1]{CACT2}.  

\begin{proposition}\label{GSresult03}
Let $(u_n) \subset H_{\tilde{\Omega}}^{1/2}$ be a bounded sequence such that 
\begin{equation}\label{GS10}
\lim_{n\to \infty}\sup_{y\in \R}\int_{\Lambda(y,\kappa)} |u_n|^2dx =0,
\end{equation}
for some $\kappa>0$ and $\Lambda(y,\kappa) = (y-\kappa,y+\kappa)\cap \R\setminus (a,b)$ with $\Lambda(y,\kappa)\neq \emptyset$. Then,
\begin{equation}\label{GS11}
\lim_{n\to \infty}\int_{\R\setminus (a,b)}|u_n|^{p}dx = 0\;\;\mbox{for all}\;\;p\in (2,\infty).
\end{equation}
\end{proposition}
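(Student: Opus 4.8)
The plan is to mimic the classical Lions vanishing lemma, but carried out inside the exterior domain $\tilde\Omega = \R\setminus(a,b)$ together with the seminorm restricted to $\R^2\setminus(a,b)^2$, so that the hypothesis is phrased only in terms of the truncated balls $\Lambda(y,\kappa)$. First I would fix $p\in(2,\infty)$ and interpolate: for $q\in(2,p)$ and any interval $\Lambda(y,\kappa)$, the continuous embedding $H^{1/2}(\Lambda(y,\kappa))\hookrightarrow L^{q}(\Lambda(y,\kappa))$ gives, by a standard Gagliardo–Nirenberg-type estimate on the bounded-diameter interval (with constant independent of the center $y$, since all the intervals are translates of each other, intersected with $\tilde\Omega$),
$$
\int_{\Lambda(y,\kappa)}|u_n|^{q}\,dx \le C\left(\int_{\Lambda(y,\kappa)}|u_n|^{2}\,dx\right)^{1-\lambda}\left(\|u_n\|_{L^2(\Lambda(y,\kappa))}^2 + \iint_{\Lambda(y,\kappa)^2}\frac{|u_n(x)-u_n(z)|^2}{|x-z|^{2}}\,dz\,dx\right)^{\lambda}
$$
for a suitable $\lambda\in(0,1)$ depending on $q$.

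Next I would cover $\tilde\Omega$ by a countable family of intervals $\Lambda(y_j,\kappa)$ with the property that every point of $\tilde\Omega$ lies in at most a fixed finite number $M$ of them (a bounded-overlap covering of $\R$ by intervals of length $2\kappa$ does the job, after intersecting with $\tilde\Omega$). Summing the interpolation inequality over $j$, using that $\sup_j \int_{\Lambda(y_j,\kappa)}|u_n|^2\,dx \le \sup_{y\in\R}\int_{\Lambda(y,\kappa)}|u_n|^2\,dx =: \varepsilon_n \to 0$, and that $\sum_j \|u_n\|_{L^2(\Lambda(y_j,\kappa))}^2 \le M\|u_n\|_{L^2(\tilde\Omega)}^2$ while $\sum_j \iint_{\Lambda(y_j,\kappa)^2}\cdots \le M \iint_{\R^2\setminus(a,b)^2}\cdots \le CM\|u_n\|_{H^{1/2}_{\tilde\Omega}}^2$ (by Lemma \ref{embe}(2), the double integral over $\Lambda(y_j,\kappa)^2$ is controlled by the $H^{1/2}_{\tilde\Omega}$-seminorm restricted to that region; actually one bounds it directly since $\Lambda(y_j,\kappa)^2\subset \R^2\setminus(a,b)^2$), I obtain
$$
\int_{\tilde\Omega}|u_n|^{q}\,dx \le C\,\varepsilon_n^{\,1-\lambda}\,\|u_n\|_{H^{1/2}_{\tilde\Omega}}^{2\lambda} \le C'\varepsilon_n^{\,1-\lambda} \to 0,
$$
using the boundedness of $(u_n)$ in $H^{1/2}_{\tilde\Omega}$. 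This proves \eqref{GS11} for every $q\in(2,p)$.

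Finally, to reach an arbitrary $p\in(2,\infty)$ rather than only exponents below a fixed one, I would upgrade by a second interpolation: pick $q\in(2,p)$ as just treated and $r\in(p,\infty)$; since $H^{1/2}_{\tilde\Omega}\hookrightarrow L^{r}(\tilde\Omega)$ continuously (Lemma \ref{embe}(3)), the sequence is bounded in $L^r(\tilde\Omega)$, and writing $\frac1p = \frac{\vartheta}{q} + \frac{1-\vartheta}{r}$ with $\vartheta\in(0,1)$ gives $\|u_n\|_{L^p(\tilde\Omega)} \le \|u_n\|_{L^q(\tilde\Omega)}^{\vartheta}\|u_n\|_{L^r(\tilde\Omega)}^{1-\vartheta} \to 0$. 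Thus \eqref{GS11} holds for all $p\in(2,\infty)$.

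The step I expect to be the main obstacle is the local interpolation inequality on $\Lambda(y,\kappa)$ with a \emph{uniform} constant: one must be careful that these sets are not genuine translates of a single fixed set (near $x=a$ and $x=b$ the intersection with $\R\setminus(a,b)$ changes the shape of $\Lambda(y,\kappa)$), so the cleanest route is to prove the estimate on a plain interval of length $2\kappa$ and on a half-line-type piece, note there are only finitely many ``shapes'' up to translation (an honest interval, or an interval with a sub-interval removed near one of two fixed points), and take the worst constant; alternatively, absorb the boundary effect by enlarging $\kappa$ to a fixed radius and using that $(a,b)$ is bounded. Once the uniform local estimate is in hand, the bounded-overlap summation and the two interpolations are routine.
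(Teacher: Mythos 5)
Your covering strategy is the classical Lions vanishing argument transplanted to $\tilde{\Omega}=\R\setminus(a,b)$, and in outline it is viable; note that the paper itself gives no proof of Proposition \ref{GSresult03} but defers it to \cite[Proposition 3.1]{CACT2}, so what you propose would be a self-contained substitute for that citation. The uniformity issue you flag at the end is in fact the least of the problems: it disappears entirely if you cover the two half-lines $(-\infty,a]$ and $[b,\infty)$ separately by honest intervals of length $2\kappa$ contained in them (each such interval is a set $\Lambda(y,\kappa)$ for a suitable center $y$, so hypothesis \eqref{GS10} applies to it), giving a cover of $\tilde{\Omega}$ with overlap number $1$ and a translation-invariant local constant.

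The genuine gap is in the exponents and in the summation step. As displayed, your local inequality is not homogeneous in $u_n$ (the left side has degree $q$, the right side degree $2$), so it cannot be correct as stated; and, more seriously, after pulling out $\varepsilon_n^{1-\lambda}$ your summation implicitly uses $\sum_j a_j^{\lambda}\leq M\bigl(\sum_j a_j\bigr)^{\lambda}$ with $a_j=\|u_n\|_{L^2(\Lambda_j)}^2+[u_n]^2_{\Lambda_j\times\Lambda_j}$ and $\lambda\in(0,1)$, which is false (the inequality goes the other way, and $\sum_j a_j^{\lambda}$ may even diverge while $\sum_j a_j$ is finite); hence the bound $\int_{\tilde{\Omega}}|u_n|^q\,dx\leq C\varepsilon_n^{1-\lambda}\|u_n\|_{H^{1/2}_{\tilde{\Omega}}}^{2\lambda}$ does not follow from what precedes it. The missing idea is precisely the classical one: choose the exponents so that the local $H^{1/2}$-norm enters with power exactly $2$. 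Concretely, fix $q\in(2,4)$, set $r=\frac{4}{4-q}$ and $\theta=\frac{2}{q}$, and use H\"older together with the uniform embedding $H^{1/2}(\Lambda_j)\hookrightarrow L^{r}(\Lambda_j)$ on intervals of length $2\kappa$ to get
\begin{equation*}
\int_{\Lambda_j}|u_n|^{q}dx \leq C\left(\int_{\Lambda_j}|u_n|^{2}dx\right)^{\frac{q-2}{2}}\left(\|u_n\|_{L^2(\Lambda_j)}^2+\iint_{\Lambda_j\times\Lambda_j}\frac{|u_n(x)-u_n(z)|^2}{|x-z|^{2}}dz\,dx\right),
\end{equation*}
with $C$ independent of $j$ and $n$. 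Now the second factor appears to the first power, so the bounded-overlap summation is legitimate and yields $\|u_n\|_{L^q(\tilde{\Omega})}^{q}\leq CM\,\varepsilon_n^{\frac{q-2}{2}}\|u_n\|_{H^{1/2}_{\tilde{\Omega}}}^{2}\to 0$ for that one $q$; your final interpolation step (between this $q$, the $L^2$-bound, and an $L^{r}(\tilde{\Omega})$-bound coming from Lemma \ref{embe}) then gives \eqref{GS11} for every $p\in(2,\infty)$ exactly as you wrote. With this correction of the local estimate and of the summation, your proof is complete.
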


\noindent

We start our analysis by considering the functional $I: H_{\tilde{\Omega}}^{1/2} \to \R$ associated to problem (P) which is defined as 
\begin{equation}\label{GS01}
I(u) = \frac{1}{4\pi}\iint_{\R^2 \setminus (a,b)^2} \frac{|u(x) - u(y)|^2}{|x-y|^2}dy dx + \frac{1}{2}\int_{\R \setminus (a,b)} |u|^2dx - \int_{\R}Q(x)F(u)dx.
\end{equation} 
The same idea explored in the proof of Lemma \ref{Plm02} works well to show that $I$ also satisfies
the geometry conditions of mountain pass theorem. Thus, applying the mountain pass theorem
without Palais-Smale condition found in \cite{MW}, it follows that there exists a $(PS)_{c_1}$ sequence $(u_n) \subset H_{\tilde{\Omega}}^{1/2}$ such that
\begin{equation}\label{GS02}
I(u_n) \to c_1 \quad \mbox{and}\quad I'(u_n) \to 0\;\;\mbox{as $n\to \infty$},
\end{equation}
where 
\begin{equation} \label{c1}
c_1 = \inf_{u\in H_{\tilde{\Omega}}^{1/2}\setminus \{0\}} \sup_{\sigma \geq 0}I(\sigma u)>0.
\end{equation}
The next result shows an important relation between the levels $c_1$ and $c_\infty$.
\begin{proposition}\label{GSresult02}
Assume that $(Q_1)$ holds. Then 
\begin{equation}\label{GS03}
0< c_1 < c_\infty.
\end{equation} 
\end{proposition}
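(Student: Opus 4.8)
The plan is to compare the mountain-pass level $c_1$ of the Neumann functional $I$ with the mountain-pass level $c_\infty$ of the limit functional $I_\infty$, using the ground state $u_\infty$ of $(P_\infty)$ as a test object. First I would recall from \eqref{decaimento} that $u_\infty$ is a positive ground state and note that for the characterization \eqref{c1} of $c_1$ it suffices to exhibit one function $w\in H_{\tilde\Omega}^{1/2}\setminus\{0\}$ with $\sup_{\sigma\ge 0} I(\sigma w) < c_\infty$. A natural candidate is $w = u_\infty|_{\tilde\Omega}$, i.e.\ the restriction of $u_\infty$ to $\R\setminus(a,b)$; since $u_\infty\in H^{1/2}(\R)$, Lemma \ref{embe}(1) guarantees $w\in H_{\tilde\Omega}^{1/2}$.

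The key computation is then to show $I(\sigma w)\le I_\infty(\sigma u_\infty)$ for every $\sigma\ge 0$, with strict inequality somewhere along the path. For the quadratic part, I would use that
$$
\iint_{\R^2\setminus(a,b)^2}\frac{|w(x)-w(y)|^2}{|x-y|^2}\,dy\,dx \le \iint_{\R^2}\frac{|u_\infty(x)-u_\infty(y)|^2}{|x-y|^2}\,dy\,dx
$$
(this is exactly the inclusion used in Lemma \ref{embe}(1)) and $\int_{\R\setminus(a,b)}|w|^2\,dx \le \int_{\R}|u_\infty|^2\,dx$. For the nonlinear part, $(Q_1)$ gives $Q(x)\ge\tilde Q$ on $\R\setminus(a,b)$, and since $(f_2)$ forces $F(s)>0$ for $s\ne0$, we get $\int_{\R}Q(x)F(\sigma w)\,dx = \int_{\R\setminus(a,b)}Q(x)F(\sigma u_\infty)\,dx \ge \tilde Q\int_{\R\setminus(a,b)}F(\sigma u_\infty)\,dx$, which subtracts off at least as much as the corresponding term in $I_\infty(\sigma u_\infty)$ would if the integral were only over $\tilde\Omega$; the part of $\int_\R\tilde Q F(\sigma u_\infty)$ over $(a,b)$ is strictly positive for $\sigma>0$ because $u_\infty>0$ everywhere. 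Combining these, $I(\sigma w) < I_\infty(\sigma u_\infty)$ for all $\sigma>0$. Taking the supremum over $\sigma\ge0$ and recalling the Nehari-type identity $\sup_{\sigma\ge0} I_\infty(\sigma u_\infty) = I_\infty(u_\infty) = c_\infty$ (since $u_\infty\in\mathcal N_\infty$ and $(f_4)$ makes the fibering map have a unique maximum), we conclude $c_1 \le \sup_{\sigma\ge0} I(\sigma w) < c_\infty$. That $c_1>0$ was already noted in \eqref{c1}, so \eqref{GS03} follows.

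The main technical point to be careful about is making the strict inequality fully rigorous: one must check that the discrepancy coming from the domain $(a,b)$ in the nonlinear term is genuinely positive and not cancelled by anything, and that $\sup_{\sigma\ge0}I(\sigma w)$ is actually attained at some finite $\sigma^\ast>0$ (which follows from $(f_2)$ giving superquadratic growth, so $I(\sigma w)\to-\infty$, together with $I(\sigma w)>0$ for small $\sigma$ by $(f_3)$ and Lemma \ref{embe}(3)). At that maximizing $\sigma^\ast$ we have $I(\sigma^\ast w) < I_\infty(\sigma^\ast u_\infty) \le c_\infty$, which is exactly what is needed. One should also verify $w\not\equiv 0$, which is clear since $u_\infty>0$ on a set of positive measure outside $(a,b)$. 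I expect the only real obstacle is bookkeeping the inequalities in the right direction — everything reduces to the monotonicity of the Gagliardo integral under shrinking the domain and the sign conditions $(Q_1)$, $(f_2)$ on $Q$ and $F$.
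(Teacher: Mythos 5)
Your overall strategy (test the fibering characterization \eqref{c1} of $c_1$ with a function built from the ground state $u_\infty$ and compare with $\sup_{\sigma\ge 0}I_\infty(\sigma u_\infty)=c_\infty$) is the right one, but the central inequality $I(\sigma w)<I_\infty(\sigma u_\infty)$ for $w=u_\infty|_{\tilde\Omega}$ is not justified, and in fact the sign bookkeeping in your nonlinear term goes the wrong way. Writing the difference out explicitly,
\begin{equation*}
I(\sigma w)-I_\infty(\sigma u_\infty)
= -\frac{\sigma^2}{2}\left[\frac{1}{2\pi}\iint_{(a,b)^2}\frac{|u_\infty(x)-u_\infty(y)|^2}{|x-y|^{2}}\,dy\,dx+\int_a^b u_\infty^2\,dx\right]
-\int_{\R\setminus(a,b)}\bigl(Q(x)-\tilde Q\bigr)F(\sigma u_\infty)\,dx
+\int_a^b \tilde Q\,F(\sigma u_\infty)\,dx .
\end{equation*}
The first two terms are indeed $\le 0$, but the last term is strictly \emph{positive}: the Neumann functional $I$ does not subtract the nonlinear energy on $(a,b)$, whereas $I_\infty$ does, so the positivity of $\int_a^b\tilde Q F(\sigma u_\infty)\,dx$ (which you correctly note) pushes $I(\sigma w)$ \emph{above} $I_\infty(\sigma u_\infty)$, not below. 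In the admissible borderline case $Q\equiv\tilde Q$ nothing compensates this term except the small quadratic contribution on $(a,b)$, and since $F$ is superquadratic by $(f_2)$ the bad term dominates the quadratic one for large $\sigma$; there is no structural reason the comparison holds at the maximizing $\sigma^\ast$ either. So the claimed pointwise inequality "$I(\sigma w)<I_\infty(\sigma u_\infty)$ for all $\sigma>0$" can fail, and with it the whole argument.

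The missing idea, which is exactly what the paper's proof supplies, is to translate the ground state: take $u_n(x)=u_\infty(x-n)$, pick the fiber maximizers $\alpha_n\to 1$ with $I(\alpha_n u_n)=\max_{t\ge0}I(tu_n)$, and set $t_n=\frac{1}{2\pi}\iint_{(a,b)^2}\frac{|u_n(x)-u_n(y)|^2}{|x-y|^2}\,dy\,dx+\int_a^b u_n^2\,dx$. Both the good quadratic gain ($-\alpha_n^2 t_n/2$) and the bad nonlinear gain on $(a,b)$ then tend to zero, but thanks to the splitting \eqref{GS05} and Lemma \ref{lm04} the bad term $s_n$ satisfies $s_n\le C\,t_n^{p}$ with $p>2$, hence $s_n=o(t_n)$. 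This yields $c_1\le I_\infty(\alpha_n u_n)-t_n\bigl(\tfrac{\alpha_n^2}{2}-O(\epsilon)\bigr)+s_n<I_\infty(\alpha_n u_n)\le c_\infty$ for $n$ large, using translation invariance of $I_\infty$ and $\sup_{t\ge0}I_\infty(tu_\infty)=c_\infty$. The remaining pieces of your sketch ($c_1>0$, the Nehari identity for $I_\infty$, membership $w\in H^{1/2}_{\tilde\Omega}$) are fine, but without the translation and the estimate $s_n=o(t_n)$ the strict inequality $c_1<c_\infty$ does not follow.
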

\begin{proof}
Let $u_\infty$ be a ground state solution of $(P_\infty)$. Define $u_n(x) = u_\infty(x-n)$ and let $\alpha_n \in (0,\infty)$ with $\alpha_n\to 1$ as $n\to \infty$, such that 
$$
I(\alpha_nu_n) = \max_{t\geq 0} I(t u_n),\;\;\; \forall n \in \mathbb{N},
$$ 
and 
$$
\frac{1}{2\pi}\iint_{\R^2\setminus (a,b)^2} \frac{|u_n(x) - u_n(y)|^2}{|x-y|^{2}}dy dx + \int_{\R \setminus (a,b)} u_n^2(x)dx = \int_{\R\setminus (a,b)}Q(x)\frac{f(\alpha_nu_n)}{(\alpha_nu_n)}u_n^2dx.
$$
Then, by $(Q_1)$,  
\begin{equation}\label{GS04}
\begin{aligned}
c_1 &\leq \max_{t\geq 0} I(t u_n) = I(\alpha_nu_n)\\
& = I_\infty(\alpha_nu_n) - \frac{1}{4\pi}\iint_{(a,b)^2} \frac{|(\alpha_nu_n)(x) - (\alpha_nu_n)(y)|^2}{|x-y|^2}dy dx - \frac{1}{2}\int_{a}^{b}(\alpha_nu_n)^2dx \\
&+ \int_{\R \setminus (a,b)} [\tilde{Q} - Q(x)] F(\alpha_nu_n)dx + \int_{a}^{b} \tilde{Q}F(\alpha_nu_n)dx\\
&\leq I_\infty(\alpha_nu_n) - \frac{\alpha_n^2t_n}{2} + \int_{a}^b \tilde{Q}F(\alpha_nu_n)dx 
\end{aligned}
\end{equation}
where 
$$
t_n = \frac{1}{2\pi} \iint_{(a,b)^2} \frac{|u_n(x) - u_n(y)|^2}{|x-y|^{2}}dy dx + \int_{a}^{b}u_n^2(x)dx.
$$
Fixed $p>2$, by ($f_1$) and ($f_3$), for each $\tau >1$ and $\epsilon >0$ there exists $C_\epsilon>0$ such that
\begin{equation}\label{GS05}
|F(s)| \leq \epsilon |s|^2 + C_\epsilon |s|^p\left(e^{\pi \tau |s|^2} - 1 \right),\;\;\forall s\in \R,
\end{equation} 
and so,  
\begin{equation}\label{GS06}
\begin{aligned}
\int_{a}^{b}\tilde{Q}F(\alpha_nu_n)dx 
&\leq \epsilon \tilde{Q} \alpha_n^2 t_n + C_\epsilon \int_{a}^b \tilde{Q}\alpha_n^p|u_n|^p \left(e^{\pi \tau \alpha_n^2 |u_n|^2} -1 \right)dx.
\end{aligned}
\end{equation}
Combining (\ref{GS04}) with (\ref{GS06}), we obtain 
\begin{equation}\label{GS07}
c_1 \leq I_\infty (\alpha_nu_n) - t_n \left(\frac{\alpha_n^2}{2} - O(\epsilon) \right) + s_n,
\end{equation}
where 
$$
s_n = C_\epsilon  \tilde{Q} \alpha_n^p\int_{a}^{b}|u_n|^p\left(e^{\pi \tau \alpha_n^{2}|u_n|^2} - 1 \right)dx.  
$$
We claim that
\begin{equation}\label{GS08}
\lim_{n\to \infty} \frac{s_n}{t_n} = 0.
\end{equation}
In fact, first of all, note that $u_\infty\in H^{1/2}(\R)$ yields $t_n\to 0$ as $n\to +\infty$. On the other hand, from Lemma \ref{lm04}, there is a positive constant $C$, independent of $n$, such that 
\begin{equation}\label{GS09}
s_n \leq Ct_n^p.
\end{equation}  
Thus, 
$$
\frac{s_n}{t_n} \leq C t_n^{p-1} \to 0\;\;\mbox{as}\;\;n\to \infty, 
$$
proving the lemma.
\end{proof}

In what follows, we denote by $\xi$ the positive constant such that the extension operator $E: H_{\tilde{\Omega}}^{1/2} \to H^{1/2}(\R)$ satisfies 
\begin{equation}\label{GS17}
\|Eu\|_{1/2} \leq \xi \|u\|_{H_{\tilde{\Omega}}^{1/2}}\;\;\forall u\in H_{\tilde{\Omega}}^{1/2}.
\end{equation}  
For more details extension operator see \cite[Proposition 4.43]{FDGD}.

\begin{proposition}\label{GSresult04}
Let $(u_n) \subset H_{\tilde{\Omega}}^{1/2}$ be a sequence with $u_n \rightharpoonup 0$ and 
\begin{equation} \label{EQNOVA1}
\limsup_{n\to +\infty}\|u_n\|_{H_{\tilde{\Omega}}^{1/2}}^{2}\leq m < \frac{1}{2\xi^2}.
\end{equation}
If there is $\kappa>0$ such that 
\begin{equation}\label{GS18}
\lim_{n\to +\infty}\sup_{y\in \R}\int_{\Lambda (y,\kappa)}|u_n(x)|^2dx =0
\end{equation}
and $(f_1)-(f_5)$ hold, then 
\begin{equation}\label{GS19}
\lim_{n\to +\infty} \int_{\R\setminus (a,b)} F(u_n)dx = \lim_{n\to +\infty}\int_{\R\setminus (a,b)}f(u_n)u_ndx =0.
\end{equation}
\end{proposition}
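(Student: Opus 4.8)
The plan is to split the integrals over $\R\setminus(a,b)$ into the region where $|u_n|$ is small and the region where $|u_n|$ is large, and show both contributions vanish. Since the embedding $H_{\tilde\Omega}^{1/2}\hookrightarrow L^q(\tilde\Omega)$ is continuous for $q\in[2,\infty)$ by Lemma \ref{embe}, the vanishing hypothesis \eqref{GS18} together with a suitable Lions-type argument already kills all $L^q$ norms. The cleanest route is to transplant the problem to $H^{1/2}(\R)$ via the extension operator $E$: set $v_n = Eu_n$. Then $\|v_n\|_{1/2}\le\xi\|u_n\|_{H_{\tilde\Omega}^{1/2}}$, so \eqref{EQNOVA1} gives $\limsup_n\|v_n\|_{1/2}^2\le 2m\xi^2 < 1$, which is exactly the hypothesis \eqref{05} needed to invoke Lemma \ref{PLlm04}: there exist $t>1$ close to $1$ and $C>0$ with $\int_\R(e^{\pi|v_n|^2}-1)^t\,dx\le C$ for all $n$. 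Since $v_n=u_n$ on $\tilde\Omega$, the same bound holds for $u_n$ on $\tilde\Omega$.

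Next I would apply Proposition \ref{GSresult03}: $u_n$ is bounded in $H_{\tilde\Omega}^{1/2}$ (by \eqref{EQNOVA1}) and satisfies \eqref{GS18}, hence $\|u_n\|_{L^p(\tilde\Omega)}\to 0$ for every $p\in(2,\infty)$; moreover $u_n\to 0$ in $L^2_{loc}$ as well, but actually the $L^2$ norm need not vanish, so I must be careful to control the small-$u_n$ region using $(f_3)$, which gives growth like $|s|^{q+1}$ with $q>1$, hence a power strictly above $2$. Concretely, fix $p>2$ and use \eqref{GS05}: $|F(s)|\le\epsilon|s|^2 + C_\epsilon|s|^p(e^{\pi\tau|s|^2}-1)$ with $\tau>1$ chosen so that $\tau\cdot 2m\xi^2<1$ still holds (possible since $2m\xi^2<1$), so that Lemma \ref{PLlm04} applies with exponent $\pi\tau$ in place of $\pi$ after adjusting $t$. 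Split: the term $\epsilon\int_{\tilde\Omega}|u_n|^2\,dx$ is bounded by $\epsilon\cdot\text{const}$, and by Hölder with exponents $t$ and $t'$,
$$
\int_{\tilde\Omega}|u_n|^p(e^{\pi\tau|u_n|^2}-1)\,dx \le \Big(\int_{\tilde\Omega}(e^{\pi\tau|u_n|^2}-1)^t\,dx\Big)^{1/t}\|u_n\|_{L^{pt'}(\tilde\Omega)}^{p} \le C\,\|u_n\|_{L^{pt'}(\tilde\Omega)}^{p}.
$$
Since $pt'>2$, Proposition \ref{GSresult03} forces $\|u_n\|_{L^{pt'}(\tilde\Omega)}\to 0$, so this term vanishes. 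Letting $n\to\infty$ and then $\epsilon\to 0$ yields $\int_{\tilde\Omega}F(u_n)\,dx\to 0$. The argument for $\int_{\tilde\Omega}f(u_n)u_n\,dx$ is identical: $(f_1)$ and $(f_3)$ give the analogous bound $|f(s)s|\le\epsilon|s|^2 + C_\epsilon|s|^p(e^{\pi\tau|s|^2}-1)$ (absorbing the extra factor $|s|$ into the $|s|^p$ term since the relevant exponent is flexible), and the same Hölder splitting applies.

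The main obstacle is purely technical: one must verify that there is enough room between $2m\xi^2$ and $1$ to accommodate \emph{both} the Hölder exponent $t>1$ from Lemma \ref{PLlm04} \emph{and} the auxiliary parameter $\tau>1$ from \eqref{GS05}, i.e. that $\tau t \cdot 2m\xi^2 < 1$ can be arranged; this is fine because $2m\xi^2 < 1$ is a strict inequality, so both $\tau$ and $t$ can be taken arbitrarily close to $1$. A secondary point is to make sure the $\epsilon|s|^2$ piece is genuinely harmless — here one only needs $\limsup_n\|u_n\|_{L^2(\tilde\Omega)}^2<\infty$, which follows from \eqref{EQNOVA1}, and then send $\epsilon\to0$ after $n\to\infty$. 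No compactness of the embedding is needed anywhere; everything rests on the uniform exponential bound (Lemma \ref{PLlm04}, via the extension operator) and the Lions-type vanishing (Proposition \ref{GSresult03}).
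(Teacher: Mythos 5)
Your proposal is correct and follows essentially the same route as the paper: extend via $E$ so that \eqref{EQNOVA1} gives $\|Eu_n\|_{1/2}^2<1$, deduce a uniform $L^t$ bound on $e^{\pi|u_n|^2}-1$ from the Trudinger--Moser inequality, and combine H\"older with the Lions-type vanishing of Proposition \ref{GSresult03} to kill the critical term, handling the quadratic piece with $\epsilon$. The only cosmetic difference is that you use the $|s|^p$-weighted estimate \eqref{GS05} with an extra parameter $\tau>1$, whereas the paper uses the simpler bound $|f(s)|\le \epsilon|s|+C_\epsilon(e^{\pi|s|^2}-1)$ and the fact that $t'>2$, which avoids juggling $\tau$ altogether.
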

\begin{proof}
By using (\ref{GS18}) together with Proposition \ref{GSresult03}, we get 
\begin{equation}\label{GS20}
u_n \to 0\;\;\mbox{in}\;\;L^{p}(\R\setminus (a,b))\;\;\mbox{for all $p\in (2,\infty)$}.
\end{equation} 
Setting $v_n = Eu_n$, it follows from (\ref{GS17})-(\ref{EQNOVA1}), 
$$
\|v_n\|_{1/2}\leq \xi \|u_n\|_{H_{\tilde{\Omega}}^{1/2}} < \frac{1}{\sqrt{2}} < 1, \quad \forall n \in \mathbb{N}.
$$  
Then, by (\ref{I03}) and Lemma \ref{lm03}, there exists $t>1$ close to $1$ such that 
$$
\sup_{n \in \mathbb{N}}\int_{\R}\left(e^{\pi |v_n|^2}-1 \right)^tdx<+\infty.
$$
From this, the function
$$
g_n(x) = e^{\pi |v_n(x)|^2} - 1,\;\;\forall x\in \R,
$$
belongs to $L^t(\R)$ and there exists $C>0$ such that 
$\|g_n\|_{L^t(\R)}\leq C$ for all $n\in \N$. Therefore, the sequence 
$$
h_n(x) = e^{\pi |u_n|^2} -1\;\;x\in \R\setminus (a,b)
$$ 
belongs to $L^t(\R\setminus (a,b))$ and there exists $C>0$ such that $\|h_n\|_{L^t(\tilde{\Omega})}\leq C$ for all $n\in \N$. On the other hand, by $(f_1)$ and $(f_3)$, given $\epsilon >0$, there is $C_\epsilon>0$ such that 
\begin{equation}\label{01}
|f(s)| \leq \epsilon |s| + C_\epsilon \left(e^{\pi |s|^2} - 1 \right),\;\forall s\in \R,
\end{equation}
and so, 
$$
|f(u_n)| \leq \epsilon |u_n| + C_\epsilon \left(e^{\pi |u_n|^2} - 1 \right), \forall n \in \mathbb{N}.
$$
Now, the H\"older inequality leads to
$$
\begin{aligned}
\int_{\R\setminus (a,b)} f(u_n)u_n 
&\leq \epsilon \int_{\R\setminus (a,b)}u_n^2dx + C_\epsilon C\|u_n\|_{L^{t'}(\tilde{\Omega})},
\end{aligned}
$$
with $\frac{1}{t}+\frac{1}{t'} = 1$. Recalling that $(u_n)$ is bounded in $H_{\tilde{\Omega}}^{1/2}$, the last inequality yields  
$$
\lim_{n\to \infty} \int_{\R\setminus (a,b)} f(u_n)u_ndx =0.
$$
The same idea works to show that 
$$
\lim_{n\to \infty}\int_{\R\setminus (a,b)}F(u_n)dx = 0.
$$
\end{proof}

\begin{proposition}\label{GSresult05}
If $(u_n) \subset H_{\tilde{\Omega}}^{1/2}$ satisfies 
$$
I(u_n) \to c_1\;\;\mbox{and}\;\;I'(u_n)\to 0,
$$
we have that 
\begin{equation} \label{NEQ1}
\limsup_{n\to +\infty}\|u_n\|_{H_{\tilde{\Omega}}^{1/2}} < \frac{1}{\sqrt{2}\xi}.
\end{equation}
Moreover, the weak limit $u_1$ of $(u_n)$ in $H_{\tilde{\Omega}}^{1/2}$ is a nontrivial critical point of $I$ with $I(u_1) = c_1$.
\end{proposition}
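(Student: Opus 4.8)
The plan is to first establish the bound \eqref{NEQ1} on the energy norm of the Palais–Smale sequence, then use it to pass to the weak limit and identify a nontrivial critical point at level $c_1$. For the first part, I would combine $I(u_n) - \frac{1}{\theta}I'(u_n)u_n = c_1 + o_n(1) + o_n(1)\|u_n\|_{H_{\tilde\Omega}^{1/2}}$ with $(f_2)$ in the usual way to deduce that $(u_n)$ is bounded in $H_{\tilde\Omega}^{1/2}$; more precisely, $(f_2)$ gives
$$
\Bigl(\tfrac12 - \tfrac1\theta\Bigr)\Bigl(\tfrac{1}{2\pi}\iint_{\R^2\setminus(a,b)^2}\tfrac{|u_n(x)-u_n(y)|^2}{|x-y|^2}\,dy\,dx + \int_{\tilde\Omega}|u_n|^2\,dx\Bigr) \le c_1 + o_n(1) + o_n(1)\|u_n\|_{H_{\tilde\Omega}^{1/2}},
$$
so $\limsup_n \|u_n\|_{H_{\tilde\Omega}^{1/2}}^2 \le \frac{2\theta}{\theta-2}\,c_1$. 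The point is then that $c_1 < c_\infty$ by Proposition \ref{GSresult02}, and a comparison argument using $(f_5)$ — exactly as in the hypothesis on $C_p$ in $(f_5)$ — forces $c_1$ to be small enough that $\frac{2\theta}{\theta-2}c_1 < \frac{1}{2\xi^2}$. Concretely: using $(f_5)$ and the mountain-pass characterization \eqref{c1}, one estimates $c_1 \le \sup_{\sigma\ge0} I(\sigma w)$ for a minimizer-type function $w$ realizing $S_p$ (appropriately truncated so that it lies in $H_{\tilde\Omega}^{1/2}$, whence the factor $\xi$ enters), and the explicit lower bound on $C_p$ is calibrated precisely so that this supremum is below $\frac{(\theta-2)}{2\theta}\cdot\frac{1}{2\xi^2}$; combining the two inequalities yields \eqref{NEQ1}. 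The inequality is strict because $(f_5)$ is a strict inequality on $C_p$.

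Once \eqref{NEQ1} holds, let $u_1$ be the weak limit of $(u_n)$ in $H_{\tilde\Omega}^{1/2}$ (up to a subsequence), so $u_n \rightharpoonup u_1$, $u_n \to u_1$ in $L^q_{loc}$ for $q \in [2,\infty)$, and $u_n(x)\to u_1(x)$ a.e. Since \eqref{NEQ1} transfers (via the extension operator and \eqref{GS17}) to $\limsup_n\|Eu_n\|_{1/2}^2 < \frac12$, Corollary \ref{PLcor05} applies to the extended sequence: $f(u_n)\varphi \to f(u_1)\varphi$ in $L^1$ for $\varphi \in C_0^\infty(\R)$, hence $I'(u_n)\varphi \to I'(u_1)\varphi$ for all such $\varphi$ (the quadratic part is weakly continuous tested against fixed $\varphi$), and since $I'(u_n)\to 0$ we get $I'(u_1)\varphi = 0$ for all $\varphi \in C_0^\infty$, and by density $I'(u_1) = 0$. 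So $u_1$ is a critical point of $I$.

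It remains to show $u_1 \neq 0$ and $I(u_1) = c_1$. For nontriviality I would argue by contradiction: suppose $u_1 = 0$. Then either (a) vanishing occurs, i.e. $\lim_n \sup_{y\in\R}\int_{\Lambda(y,\kappa)}|u_n|^2\,dx = 0$ for every $\kappa>0$, or (b) it does not. In case (a), Proposition \ref{GSresult04} (whose hypotheses are met thanks to \eqref{NEQ1} and $u_n\rightharpoonup 0$) gives $\int_{\tilde\Omega}f(u_n)u_n\,dx \to 0$ and $\int_{\tilde\Omega}F(u_n)\,dx\to 0$; plugging into $I'(u_n)u_n = o_n(1)$ forces $\|u_n\|_{H_{\tilde\Omega}^{1/2}}\to 0$, whence $I(u_n)\to 0$, contradicting $I(u_n)\to c_1 > 0$. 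In case (b), there exist $\delta>0$, $\kappa>0$ and points $y_n\in\R$ with $\int_{\Lambda(y_n,\kappa)}|u_n|^2\,dx \ge \delta$; since $u_n\rightharpoonup 0$, the $|y_n|$ must go to $\infty$, and translating $u_n$ back by $y_n$ (using $(Q_1)$, so that $Q(\cdot + y_n)\to \tilde Q$, and that the domain $\tilde\Omega$ shifted by $y_n$ exhausts $\R$) one produces a nontrivial critical point of $I_\infty$, and a standard energy-comparison with Fatou's lemma yields $\liminf_n I(u_n) \ge c_\infty$, contradicting $c_1 < c_\infty$ from Proposition \ref{GSresult02}. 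Hence $u_1 \neq 0$, so $I(u_1) \ge c_1$ by \eqref{c1}. For the reverse inequality, $I(u_1) = I(u_1) - \frac12 I'(u_1)u_1 = \int_{\R}Q(x)\bigl(\tfrac12 f(u_1)u_1 - F(u_1)\bigr)dx$, and by $(f_2)$ the integrand is nonnegative; applying Fatou to $I(u_n) - \frac12 I'(u_n)u_n = c_1 + o_n(1)$ gives $I(u_1) \le c_1$. Therefore $I(u_1) = c_1$, completing the proof. The main obstacle is the first part — verifying that the calibration of $C_p$ in $(f_5)$ together with $c_1 < c_\infty$ genuinely delivers the quantitative bound \eqref{NEQ1}, since this is where the exponential-critical threshold $\pi$ in the Trudinger–Moser inequality \eqref{I03} must not be exceeded.
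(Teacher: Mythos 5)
Your proposal is correct and follows essentially the same route as the paper: boundedness of the $(PS)_{c_1}$ sequence from $(f_2)$, the quantitative bound $\limsup_n\|u_n\|_{H_{\tilde{\Omega}}^{1/2}}^{2}\leq \frac{2\theta c_1}{\theta-2}<\frac{1}{2\xi^2}$ obtained from the calibration of $C_p$ in $(f_5)$ together with $c_1<c_\infty$, the extension operator plus the Trudinger--Moser bound to pass to the limit in the nonlinear term, and the vanishing/non-vanishing dichotomy with translations, the limit problem $(P_\infty)$ and Fatou's lemma to exclude $u_1=0$. The only deviations are minor: the paper runs the $(f_5)$-calibration on $c_\infty$ (asserting $c_\infty<\frac{\theta-2}{4\xi^2\theta}$, with $\xi$ entering through the threshold $\frac{1}{2\xi^2}$ coming from the extension operator rather than through any truncation of an $S_p$-minimizer, which is unnecessary since $H^{1/2}(\R)\hookrightarrow H_{\tilde{\Omega}}^{1/2}$) and then invokes $c_1<c_\infty$, whereas you estimate $c_1$ directly; and you add the verification $I(u_1)=c_1$ via $(f_4)$ and Fatou, a step the paper's written proof leaves implicit.
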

\begin{proof}
From ($f_1$)-($f_5$),  
\begin{equation}\label{GS21}
c_\infty < \frac{\theta-2}{4\xi^2\theta}.
\end{equation}
On the other hand, from (\ref{NEQ1}) and $(f_2)$, there exists $n_0\in \N$ such that for all $n\geq n_0$, it holds
$$
\begin{aligned}
\left(\frac{1}{2}-\frac{1}{\theta} \right)\|u_n\|_{H_{\tilde{\Omega}}^{1/2}}^{2} & \leq \left( \frac{1}{2}-\frac{1}{\theta}\right)\|u_n\|_{H_{\tilde{\Omega}}^{1/2}}^{2} + \int_{\R\setminus (a,b)}\left[\frac{1}{\theta}f(u_n)u_n - F(u_n) \right]dx\\
&= I(u_n) - \frac{1}{\theta}I'(u_n)u_n\\
&\leq c_1 + \|u_n\|_{H_{\tilde{\Omega}}^{1/2}}.
\end{aligned}
$$
Therefore $(u_n)$ is bounded in $H_{\tilde{\Omega}}^{1/2}$. Since $H_{\tilde{\Omega}}^{1/2}$ is a Hilbert space, up to a subsequence still denoted by $(u_n)$, there is $u_1 \in H_{\tilde{\Omega}}^{1/2}$ such that
\begin{equation*}\label{GS22}
\begin{aligned}
&u_n \rightharpoonup u_1\;\;\mbox{in}\;\;H_{\tilde{\Omega}}^{1/2},\\
&u_n \to u_1\;\;\mbox{in}\;\;L_{loc}^{q}(\R)\;\;\mbox{for all}\;\;q\geq 1,\\
&u_n(x) \to u_1(x)\;\;\mbox{a.e. in}\;\;\R.
\end{aligned}
\end{equation*}
By using again (\ref{NEQ1}) and ($f_2$),  we derive that
$$
\begin{aligned}
c_1 & = \lim_{n\to \infty} I(u_n)\\
& = \lim_{n\to \infty} \left(I (u_n) - \frac{1}{\theta}I'(u_n)u_n \right)\\
&\geq  \frac{\theta-2}{2\theta} \limsup_{n\to \infty}\|u_n\|_{H_{\tilde{\Omega}}^{1/2}}^{2}.
\end{aligned}
$$  
Hence, by Proposition \ref{GSresult02} and (\ref{GS21}), 
\begin{equation}\label{GS23}
\limsup_{n\to \infty}\|u_n\|_{H_{\tilde{\Omega}}^{1}}^{2} = m \leq \frac{2\theta c_1}{\theta - 2} < \frac{1}{2\xi^2}.
\end{equation}
Then, there exists $n_0\in \N$ such that 
\begin{equation}\label{GS24}
\|u_n\|_{H_{\tilde{\Omega}}^{1/2}}< \frac{1}{\sqrt{2}\xi}\;\;\forall n\geq n_0.
\end{equation}
Consequently, for $v_n = Eu_n$ we get 
$$
\|v_n\|_{1/2} \leq \xi \|u_n\|_{H_{\tilde{\Omega}}^{1/2}}
$$ 
and by (\ref{GS24}) 
\begin{equation}\label{GS25}
\|v_n\|_{1/2}\leq \xi\|u_n\|_{H_{\tilde{\Omega}}^{1/2}} < \frac{1}{\sqrt{2}} < 1,\;\;\forall n\geq n_0.
\end{equation}
By (\ref{I03}), there exist $\beta, t>1$ close to $1$ with $\beta \|v_n\|_{1/2}^{2}< 1$ such that 
$$
\int_{\R}\left(e^{\pi |v_n|^2}-1 \right)^tdx \leq \int_{\R}\left(e^{\pi \beta \|v_n\|_{1/2}^{2} \left|\frac{v_n}{\|v_n\|_{1/2}} \right|^2}-1 \right)dx \leq C.
$$
Thus,  the function 
$$
f_n(x) = e^{\pi |v_n(x)|^2} - 1\;\;\forall x\in \R,
$$
belongs to $L^t(\R)$ and there exists $C>0$ such that $\|f_n\|_{L^t(\R)}\leq C$ for all $n\in \N$. Hence, the sequence 
\begin{equation}\label{GS26}
h_n(x) = e^{\pi |u_n(x)|^2} -1\;\;x\in \R\setminus (a,b)
\end{equation}
belongs to $L^t(\R\setminus (a,b))$ and there exists $C>0$ such that $\|h_n\|_{L^t(\tilde{\Omega})}\leq C$ for all $n\in \N$. 

Let $\varphi\in H^{1/2}(\tilde{\Omega})$ a test function with bounded support, then  $I'(u_n)\varphi = o_n(1),$
that is, 
$$
\frac{1}{2\pi}\iint_{\R^2\setminus (a,b)^2}\frac{[u_n(x) - u_n(y)][\varphi (x) - \varphi (y)]}{|x-y|^2}dy dx + \int_{\R \setminus (a,b)} u_n(x)\varphi (x)dx = \int_{\R\setminus (a,b)} Q(x)f(u_n(x))\varphi (x)dx.
$$
By the weak convergence $u_n \rightharpoonup u$ in $H_{\tilde{\Omega}}^{1/2}$, 
\begin{equation}\label{GS27}
\begin{aligned}
\frac{1}{2\pi}\iint_{\R^2\setminus (a,b)^2}&\frac{[u_n(x) - u_n(y)][\varphi (x) - \varphi (y)]}{|x-y|^2}dy dx + \int_{\R \setminus (a,b)} u_n(x)\varphi (x)dx\\
&\to \frac{1}{2\pi}\iint_{\R^2\setminus (a,b)^2}\frac{[u_1(x) - u_1(y)][\varphi (x) - \varphi (y)]}{|x-y|^2}dy dx + \int_{\R \setminus (a,b)} u_1(x)\varphi (x)dx
\end{aligned}
\end{equation}
as $n\to \infty$. On the other hand 
$$
\begin{aligned}
\int_{\R\setminus (a,b)}Q(x)[f(u_n(x)) - f(u_1(x))]\varphi(x)dx & = \int_{B(0,T)\cap {(\R\setminus (a,b))}} Q(x)[f(u_n(x)) - f(u_1(x))]\varphi (x) dx \\
&+\int_{B^c(0,T)\cap (\R\setminus (a,b))}Q(x)[f(u_n(x)) - f(u_1(x))]\varphi (x)dx\\
& = A_1 + A_2.
\end{aligned}
$$
For $A_2$, the boundedness of $Q$ combined with H\"older inequality gives  
$$
\begin{aligned}
\int_{B^c(0,T)\cap(\R\setminus (a,b))}\hspace{-2cm}Q(x)[f(u_n) - f(u_1)]\varphi (x)dx &\leq K\left( \int_{B^c(0,T)\cap(\R\setminus (a,b))} |f(u_n) - f(u_1)|^tdx\right)^{1/t} \left( \int_{B^c(0,T)\cap (\R\setminus (a,b))} |\varphi|^{t'}dx\right)^{1/t'}\\
&\leq K\left( \int_{\R\setminus (a,b)} |f(u_n) - f(u_1)|^tdx\right)^{1/t} \left( \int_{B^c(0,T)} |\varphi|^{t'}dx\right)^{1/t'},
\end{aligned}
$$
where $t>1$ is close to $1$ and 
$$
\frac{1}{t} + \frac{1}{t'} = 1.
$$
By (\ref{GS26}), there exist a positive constant $C$ such that 
$$
\int_{B^c(0,T)\cap(\R\setminus (a,b))}Q(x)[f(u_n) - f(u_1)]\varphi (x)dx \leq \tilde{K}\left( \int_{B^c(0,T)} |\varphi|^{t'}dx\right)^{1/t'}.
$$
So for any $\epsilon >0$, there exists $T_0>0$ such that 
\begin{equation}\label{GS28}
\int_{B^c(0,T)\cap(\R\setminus (a,b))}Q(x)[f(u_n) - f(u_1)]\varphi (x)dx < \frac{\epsilon}{2}\;\;\mbox{for all}\;\;T>T_0.
\end{equation}
On the other hand, following the ideas of Corollary \ref{PLcor05} we can show that 
\begin{equation}\label{GS29}
 \int_{B(0,T)\cap {(\R\setminus (a,b))}} Q(x)[f(u_n(x)) - f(u_1(x))]\varphi (x) dx \to 0\;\;\mbox{as}\;\;n\to \infty.
\end{equation}  
Therefore, taking the limit in $I'(u_n)\varphi=o_n(1)$ and using (\ref{GS27})-(\ref{GS29}), we obtain  
$$
\frac{1}{2\pi}\iint_{\R^2\setminus (a,b)^2}\frac{[u_1(x)-u_1(y)][\varphi(x) - \varphi(y)]}{|x-y|^2}dy dx + \int_{\R\setminus (a,b)}u_1(x)\varphi(x)dx  = \int_{\R\setminus (a,b)}Q(x)f(u_1(x))\varphi(x)dx,
$$ 
for all $\varphi\in H_{\tilde{\Omega}}^{1/2}(\R)$, that is,  
$$
I'(u_1)\varphi = 0\;\;\mbox{for all}\;\;\varphi\in H_{\tilde{\Omega}}^{1/2}.
$$ 

\noindent 
Now, we are going to show that $u\neq 0$. Assuming by contradiction that $u=0$, we have two situations to consider:
\begin{enumerate}
\item[(I)] $\displaystyle \lim_{n\to \infty}\sup_{y\in \R} \int_{\Lambda(y,\kappa)}|u_n|^2dx=0$, or
\item[(II)] There exist $\eta>0$ and $(y_n) \subset \R$ with $|y_n| \to +\infty$ such that 
$$
\liminf_{n\to \infty}\int_{\Lambda(y_n, \kappa)} |u_n|^2dx\geq \eta.
$$
\end{enumerate}
In the sequel, we prove that the aforementioned cases (I) and (II) do not hold, thus we can conclude that $u\neq 0$.

\noindent 
{\bf Analysis of (I):} If (I) holds, it follows from Proposition \ref{GSresult04},  
$$
\lim_{n\to \infty} \int_{\R\setminus (a,b)} f(u_n)u_ndx =0.
$$
Combining this equality with 
$$
o(1) = I'(u_n)u_n = \|u_n\|_{H_{\tilde{\Omega}}^{1/2}}^{2}
- \int_{\R\setminus (a,b)}Q(x)f(u_n)u_ndx, 
$$
we conclude that $\|u_n\|_{H_{\tilde{\Omega}}^{1/2}}\to 0$ as $n\to \infty$, which is absurd, because $I(u_n)\to c_1>0$. Therefore, (I) does not hold.  

\noindent 
{\bf Analysis of (II):} Let $w_n(x) = u_n(x+y_n)$ for $x\in \R\setminus (a-y_n,b-y_n)$. Since $|y_n| \to \infty$ as $n\to \infty$, for each $T>0$ fixed, there is $n_0=n_0(T) \in \mathbb{N}$ such that  
$$
(-T,T) \subset \R\setminus (a-y_n,b-y_n), \quad \forall n \geq n_0.
$$ 
In what follows we set $\tilde{\Omega}_n = \R\setminus (a-y_n, b-y_n).$ As $(u_n)$ is bounded in $H_{\tilde{\Omega}}^{1/2}$, then there exists a positive constant $M$ such that 
$$
\begin{aligned}
M & \geq  \frac{1}{2\pi}\iint_{\R^{2} \setminus (a,b)^2} \frac{|u_n(x) - u_n(y)|^2}{|x-y|^{2}}dy dx + \int_{\R\setminus (a,b)}|u_n|^2dx\\
&= \frac{1}{2\pi}\iint_{\R^{2} \setminus (a-y_n,b-y_n)^2} \frac{|w_n(x) - w_n(y)|^2}{|x-y|^{2}}dy dx + \int_{\R \setminus (a-y_n,b -y_n)}|w_n|^2dx \\
&\geq \frac{1}{2\pi}\int_{-T}^{T}\int_{-T}^{T} \frac{|w_n(x) - w_n(y)|^2}{|x-y|^{2}}dy dx + \int_{-T}^{T} |w_n|^2dx,
\end{aligned}
$$  
that is, 
$$
\|w_n\|_{H^{1/2}(-T,T)}^{2} \leq M, \quad \forall n \geq n_0 \quad \mbox{and} \quad \forall T>0. 
$$ 
From this, there is a subsequence of $(u_n)$, still denoted by itself, and $w\in H_{loc}^{1/2}(\R) \setminus \{0\}$ such that for each $T>0$,
$$
w_n \rightharpoonup  w \;\;\mbox{in $H^{1/2}(-T,T)$, as $n\to \infty$}.
$$
Then, by the lower semicontinuity of the norm  
$$
\|w\|_{H^{1/2}(-T,T)}\leq \liminf_{n\to \infty}\|w_n\|_{H^{1/2}(-T,T)} \leq M,\quad \forall T>0,
$$ 
from where it follows that $w \in H^{1/2}(\mathbb{R})$ and 
$$
\|w\|_{H^{1/2}(\R)}\leq \liminf_{T\to \infty} \|w\|_{H^{1/2}(-T,T)}\leq M. 
$$
Denoting $\hat{w}_n = Ew_n$, it follows that 
$$
\|\hat{w}_n\|_{1/2} \leq \xi \|w_n\|_{H_{\tilde{\Omega}_n}^{1/2}},
$$
then
\begin{equation}\label{GS30}
\|\hat{w}_n\|_{1/2} \leq \xi \|u_n\|_{H_{\tilde{\Omega}}^{1/2}} < \frac{1}{\sqrt{2}},\;\;\mbox{for all $n\in \N$}.
\end{equation}
Now, let $\psi \in H_{\tilde{\Omega}}^{1/2}$ be a test function with bounded support. Since $I'(u_{n}) = 0$, we have    
\begin{equation}\label{GS31}
I'(u_n)\psi(.-y_{n}) = 0.
\end{equation}
By doing the change of variable $\tilde{x} = x - y_{n}$ and $\tilde{y} = y - y_{n}$, we get 
\begin{equation}\label{GS32}
\begin{aligned}
&\frac{1}{2\pi}\iint_{\R^{2}\setminus (a-y_n,b-y_n)^2} \frac{[w_{n}(x) - w_{n}(y)][\psi(x) - \psi(y)]}{|x-y|^{2}}dy dx + \int_{\tilde{\Omega}_n} w_{n}(x)\psi(x)dx \\
&\hspace{6cm}= \int_{\tilde{\Omega}_n} Q(x + y_{n})f(w_n(x))\psi(x)dx.
\end{aligned}
\end{equation} 
By the weak convergence of $w_n$ to $w$,  
\begin{equation}\label{GS33}
\begin{aligned}
\frac{1}{2\pi}\iint_{\R^{2}\setminus (a-y_n, b-y_n)^2} &\frac{[w_{n}(x) - w_{n}(y)][\psi(x) - \psi(y)]}{|x-y|^{2}}dy dx + \int_{\tilde{\Omega}_n} w_{n}(x)\psi(x)dx \\
&\to \frac{1}{2\pi} \iint_{\R^{2}} \frac{[w(x) - w(y)][\psi(x) - \psi(y)]}{|x-y|^{2}}dydx + \int_{\R} w(x)\psi(x)dx.
\end{aligned}
\end{equation}
Now we are going to show that 
$$
\int_{\tilde{\Omega}_n} Q(x+y_n)f(w_n(x))\psi(x)dx \to \int_{\R} \tilde{Q}f(w(x))\psi(x)dx.
$$
In fact
$$
\begin{aligned}
\int_{\tilde{\Omega}_n} Q(x+y_n)f(w_n)\psi dx - \int_{\R}\tilde{Q}f(w)\psi dx &= \int_{\tilde{\Omega}_n} [Q(x+y_n)f(w_n) - \tilde{Q}f(w)]\psi dx\\
&-\int_{a-y_n}^{b-y_n}\tilde{Q}f(w)\psi dx
\end{aligned}
$$
By H\"older inequality,  
$$
\begin{aligned}
\int_{a-y_n}^{b-y_n}\tilde{Q}f(w_n)\psi dx &= \tilde{Q}\int_{\R} \chi_{(a-y_n,b-y_n)}(x)f(w_n)\psi dx\\
&\leq \tilde{Q} \left(\int_{\R} |f(w_n)|^tdx\right)^{1/t}\left(\int_{\R}\chi_{(a-y_n.b-y_n)}(x) |\psi|^{t'}dx\right)^{1/t'},
\end{aligned}
$$
where $t>1$ close to 1 is such that $f(w_n)\in L^t(\R)$ and there exists $C>0$ such that $\|f(w_n)\|_{L^t(\R)}\leq C$ and 
$$
\frac{1}{t} + \frac{1}{t'} = 1.
$$ 
Note that,
$$
\chi_{(a-y_n, b-y_n)}(x) \to 0\;\;\mbox{as $n\to \infty$, for a.e. $x\in \R$}.
$$
then 
$$
\chi_{(a-y_n, b-y_n)}(x)|\psi(x)|^{t'} \to 0\;\;\mbox{as $n\to +\infty$, for a.e. $x\in \R$}.
$$
Furthermore 
$$
|\chi_{(a-y_n, b-y_n)}(x) |\psi|^{t'}| \leq |\psi|^{t'} \in L^1(\R).
$$
Thereby, by Lebesgue's theorem,  
$$
\int_{\R} \chi_{a-y_n,b-y_n}(x)|\psi|^{t'} dx \to 0\;\;\mbox{as}\;\;n\to +\infty, 
$$
which implies 
\begin{equation}\label{GS34}
\int_{a-y_n}^{b-y_n}\tilde{Q}f(w_n)\psi dx \to 0,\;\,\mbox{as}\;\;n\to \infty.
\end{equation}

From now on, we fix $T>0$ and $n_0 \in \mathbb{N}$ such that  
$$
(-T,T)\subset \R\setminus (a-y_n,b-y_n) \;\;\mbox{for all $n\geq n_0$}.
$$  
Then 
$$
\begin{aligned}
\int_{\tilde{\Omega}_n} [Q(x+y_n)f(w_n) - \tilde{Q}f(w)]\psi dx &= \int_{(-T,T) \cap \tilde{\Omega}_n} [Q(x+y_n)f(w_n) - \tilde{Q}f(w)]\psi dx \\
&+ \int_{(\R\setminus (-T,T)) \cap \tilde{\Omega}_n} [Q(x+y_n)f(w_n) - \tilde{Q}f(w)]\psi dx.
\end{aligned}
$$
Setting $\Lambda_n = (\R\setminus (-T,T)) \cap \tilde{\Omega}_n$, it follows from H\"older inequality 
$$
\begin{aligned}
\int_{\Lambda_n} [Q(x+y_n)f(w_n) - \tilde{Q}f(w)]\psi dx &= \int_{\Lambda_n} (Q(x+y_n) - \tilde{Q})f(w_n)\psi dx +\int_{\Lambda_n} \tilde{Q}(f(w_n) - f(w))\psi dx \\
&\leq \left(\int_{\Lambda_n}|f(w_n)|^tdx\right)^{1/t}  \left(\int_{\Lambda_n} |[Q(x+y_n) - \tilde{Q}]\psi|^{t'} dx\right)^{1/t'} \\
&+ \tilde{Q} \left(\int_{\Lambda_n}|f(w_n) - f(w)|^{t}dx\right)^{1/t} \left(\int_{\Lambda_n}|\psi|^{t'} dx\right)^{1/t'}\\
&\leq \left(\int_{\tilde{\Omega}_n}|f(w_n)|^tdx\right)^{1/t}  \left(\int_{\R\setminus (-T,T)} |[Q(x+y_n) - \tilde{Q}]\psi|^{t'} dx\right)^{1/t'} \\
&+\tilde{Q} \left(\int_{\tilde{\Omega}_n}|f(w_n) - f(w)|^{t}dx\right)^{1/t} \left(\int_{\R\setminus (-T,T)}|\psi|^{t'} dx\right)^{1/t'}.
\end{aligned}
$$
Note that, for a.e. $x\in \R\setminus (-T,T)$ 
$$
|Q(x+y_n) - \tilde{Q}|^{t'}|\psi(x)|^{t'} \to 0 \;\;\mbox{as}\;\;n\to \infty,
$$
and since $Q$ is bounded, we have 
$$
|Q(x+y_n) - \tilde{Q}|^{t'}|\psi|^{t'}\leq K|\psi|^{t'}\in L^1(\R).
$$ 
Then, by Lebesgue's Theorem 
$$
\int_{\R\setminus (-T,T)} |[Q(x+y_n) - \tilde{Q}]\psi|^{t'} dx \to 0\;\;\mbox{as}\;\,n\to 0.
$$
Observing that $(f(\hat{w}_n))$ is bounded in $L^{t}(\R)$ and $f(w) \in L^{t}(\R)$, given $\epsilon>0$, we can take  $T>0$ large enough such that 
$$
\tilde{Q} \left(\int_{\tilde{\Omega}_n}|f(w_n) - f(w)|^{t}dx\right)^{1/t} \left(\int_{\R\setminus (-T,T)}|\psi|^{t'} dx\right)^{1/t'} < \epsilon, \quad \forall n \in \mathbb{N}.
$$
The above analysis give
\begin{equation}\label{GS35}
\int_{\Lambda_n} [Q(x+y_n)f(w_n) - \tilde{Q}f(w)]\psi dx \to 0.
\end{equation}
On the other hand, as in Corollary \ref{PLcor05}, we have 
\begin{equation}\label{GS36}
\int_{(-T,T) \cap \tilde{\Omega}_n} [Q(x+y_n)f(w_n) - \tilde{Q}f(w)]\psi dx = \int_{(-R,R)} [Q(x+y_n)f(w_n) - \tilde{Q}f(w)]\psi dx \to 0\;\,\mbox{as}\;\;n\to +\infty.
\end{equation}
Finally, by (\ref{GS33})-(\ref{GS36}), we get 
$$
I'_\infty(w)\psi = 0,
$$
showing that $w$ is a nontrivial weak solution of $(P_\infty)$. Now, applying the Fatou's lemma we get the inequality below
$$
\begin{aligned}
c_\infty &\leq I_\infty (w) - \frac{1}{2}I'_\infty(w)w \leq \liminf_{n\to \infty}I(u_n) = c_1 < c_\infty,
\end{aligned}
$$
which contradicts Proposition \ref{GSresult02}.
\end{proof}

\noindent
{\bf Proof of Theorem \ref{main1}.} First of all, in order to find positive ground state solution we assume that 
$$
f(s)=0\;\;\mbox{for all $s\leq 0$}.
$$
By Proposition \ref{GSresult05} and the Mountain Pass Theorem, $I$ has a critical point $u_1$ at the level set $c_1$.


\section{Nodal Solution}

\noindent
We star this section recalling an important lemma that will be used later on, whose the proof can be found in \cite{CAPCEM}
\begin{lemma}\label{alves}
	Let $F\in C^2(\R, \R^+)$ be a convex and even function such that $F(0) = 0$ and $f(s)=F'(s)\geq 0$ for all $s\in [0,\infty)$. Then, for all $u,v\geq 0$
	$$
	|F(u-v) - F(u) - F(v)|\leq 2(f(u)v + f(v)u).
	$$
\end{lemma}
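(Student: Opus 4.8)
The plan is to reduce to the ordered case and then bound the two one-sided differences separately. Since the right-hand side is symmetric in $u$ and $v$, and $F$ is even so that $F(u-v)=F(v-u)$, it suffices to treat $u\ge v\ge 0$. In this range I would split the claim into the two inequalities
\[
-2\bigl(f(u)v+f(v)u\bigr)\ \le\ F(u-v)-F(u)-F(v)\ \le\ 2\bigl(f(u)v+f(v)u\bigr).
\]

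The upper estimate is immediate from monotonicity: since $F'=f\ge 0$ on $[0,\infty)$, the function $F$ is nondecreasing there, and $0\le u-v\le u$ gives $F(u-v)\le F(u)$; hence $F(u-v)-F(u)-F(v)\le -F(v)\le 0\le 2(f(u)v+f(v)u)$.

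For the lower estimate I would use convexity of $F$, which makes $f=F'$ nondecreasing. Writing $F(u)-F(u-v)=\int_{u-v}^{u}f(t)\,dt$ and $F(v)=F(v)-F(0)=\int_{0}^{v}f(t)\,dt$, monotonicity of $f$ yields $F(u)-F(u-v)\le v\,f(u)$ and $F(v)\le v\,f(v)$; since $u\ge v\ge 0$, the latter is $\le u\,f(v)$. Adding gives $F(u)+F(v)-F(u-v)\le f(u)v+f(v)u\le 2(f(u)v+f(v)u)$, which is exactly the lower estimate. Combining the two one-sided bounds proves the lemma.

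The only delicate point — and the reason the ordering reduction is needed — is that the term $f(v)u$ on the right must absorb $F(v)$, for which one uses $v\le u$ to pass from $f(v)v$ to $f(v)u$; without this normalization the naive bound produces $f(v)v$, which is weaker than claimed. Everything else is a direct consequence of the monotonicity of $F$ and of $f$.
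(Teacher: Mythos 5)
Your argument is correct. The reduction to $u\ge v\ge 0$ is legitimate (the right-hand side is symmetric and $F$ even makes the left-hand side symmetric), the upper bound follows from $F$ being nondecreasing on $[0,\infty)$ together with $F\ge 0$, and the lower bound from writing $F(u)-F(u-v)=\int_{u-v}^{u}f$ and $F(v)=\int_{0}^{v}f$ and using that convexity makes $f$ nondecreasing, with $v\le u$ absorbing $f(v)v$ into $f(v)u$ — that last normalization is indeed the only delicate step, and you handle it. Note that the paper itself does not prove this lemma but only cites \cite{CAPCEM}, so your self-contained proof is a genuine addition rather than a rephrasing; in fact, since in the ordered case $F(u-v)-F(u)-F(v)\le 0$, your estimates give the sharper inequality $|F(u-v)-F(u)-F(v)|\le f(u)v+f(v)u$, of which the stated bound with the factor $2$ is an immediate consequence.
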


In what follows, we introduce the nodal set
$$
\mathcal{M} = \{u\in \mathcal{N}:\;\;u^{\pm} \not \equiv 0, I'(u)u^+ = I'(u)u^-=0\}
$$
and consider the following real number 
$$
c = \inf_{u\in \mathcal{M}} I(u).
$$
Let us point out that for all $u\in H_{\tilde{\Omega}}^{1/2}$,
\begin{equation}\label{N01}
[ u ]_{H_{\tilde{\Omega}}^{1/2}}^{2} = [u^+]_{H_{\tilde{\Omega}}^{1/2}}^{2} + [u^-]_{H_{\tilde{\Omega}}^{1/2}}^{2}- \frac{1}{2\pi}\iint_{\R^{2}\setminus (a,b)^2}\frac{u^+(x)u^-(y) + u^-(x)u^+(y)}{|x-y|^{2}}dy dx,
\end{equation}
where
$$
[u]_{H_{\tilde{\Omega}}^{1/2}}^{2} = \frac{1}{2\pi}\iint_{\R^{2}\setminus (a,b)^2}\frac{|u(x) - u(y)|^2}{|x-y|^{2}}dy dx.
$$
Therefore
\begin{equation}\label{N02}
I(u) = I(u^+) + I(u^-) - \frac{1}{2\pi}\iint_{\R^{2}\setminus (a,b)^2}\frac{u^+(x)u^-(y) + u^-(x)u^+(y)}{|x-y|^{2}}dy dx,
\end{equation}
\begin{equation}\label{N03}
I'(u)u^+ = I'(u^+)u^+ - \frac{1}{\pi}\iint_{\R^{2}\setminus (a,b)^2}\frac{u^+(x)u^-(y) + u^-(x)u^+(y)}{|x-y|^{2}}dy dx
\end{equation}
and 
\begin{equation}\label{N04}
I'(u)u^- = I'(u^-)u^- - \frac{1}{\pi}\iint_{\R^{2}\setminus (a,b)^2}\frac{u^+(x)u^-(y) + u^-(x)u^+(y)}{|x-y|^{2}}dy dx.
\end{equation}
In particular, since for any $u \in H_{\tilde{\Omega}}^{1/2}$
$$
\frac{1}{\pi}\iint_{\R^{2N}\setminus \Omega^2}\frac{u^+(x)u^-(y) + u^-(x)u^+(y)}{|x-y|^{2}}dy dx \leq 0,
$$
then for any $u\in \mathcal{M}$
\begin{equation}\label{N05}
I'(u^{\pm})u^{\pm} \leq 0.
\end{equation}
\begin{lemma}\label{Nlm01}
There exists $\rho >0$ such that 
\begin{enumerate}
\item[$(i)$] $\|u\|_{H_{\tilde{\Omega}}^{1/2}} \geq \rho$ and $I(u) >0 $ for all $u\in \mathcal{N}$;
\item[$(ii)$] $\|u^{\pm}\|_{H_{\tilde{\Omega}}^{1/2}} \geq \rho$ for all $u \in \mathcal{M}$.
\end{enumerate}
\end{lemma}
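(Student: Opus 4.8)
The plan is to run the standard Nehari--manifold uniform lower--bound argument, the only non-routine ingredient being the control of the exponential critical term via the extension operator $E$ together with the Moser--Trudinger estimate of Lemma \ref{lm04}. I will first establish the norm lower bounds $\|u\|_{H_{\tilde{\Omega}}^{1/2}}\ge\rho$ on $\mathcal N$ and $\|u^{\pm}\|_{H_{\tilde{\Omega}}^{1/2}}\ge\rho$ on $\mathcal M$, and then deduce $I(u)>0$ on $\mathcal N$ from the Ambrosetti--Rabinowitz condition $(f_2)$.

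For part $(i)$, fix $p>2$. From $(f_1)$ and $(f_3)$ one gets, for every $\epsilon>0$, a constant $C_\epsilon>0$ with $|f(s)s|\le\epsilon|s|^2+C_\epsilon|s|^p(e^{\pi|s|^2}-1)$ for all $s\in\R$. If $u\in\mathcal N$, then $I'(u)u=0$ reads $\|u\|_{H_{\tilde{\Omega}}^{1/2}}^2=\int_{\R\setminus(a,b)}Q(x)f(u)u\,dx$, so, with $Q_0:=\sup_{\R\setminus(a,b)}Q<\infty$ (finite by $(Q_1)$ and continuity) and using $\int_{\R\setminus(a,b)}|u|^2dx\le\|u\|_{H_{\tilde{\Omega}}^{1/2}}^2$,
$$\|u\|_{H_{\tilde{\Omega}}^{1/2}}^2\le\epsilon\,Q_0\,\|u\|_{H_{\tilde{\Omega}}^{1/2}}^2+C_\epsilon Q_0\int_{\R\setminus(a,b)}|u|^p\bigl(e^{\pi|u|^2}-1\bigr)dx.$$
Put $v=Eu$, so $\|v\|_{1/2}\le\xi\|u\|_{H_{\tilde{\Omega}}^{1/2}}$ by (\ref{GS17}). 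If $\|u\|_{H_{\tilde{\Omega}}^{1/2}}<\frac{1}{\sqrt2\,\xi}$ then $\|v\|_{1/2}<\frac{1}{\sqrt2}$, and on this ball one may fix $r,\beta\in(1,2)$ with $r<\beta$ and $\beta\|v\|_{1/2}^2<1$, so the proof of Lemma \ref{lm04} (via Lemma \ref{lm03}) yields a constant $C$ independent of $u$ with $\int_{\R\setminus(a,b)}|u|^p(e^{\pi|u|^2}-1)dx\le\int_\R|v|^p(e^{\pi|v|^2}-1)dx\le C\xi^p\|u\|_{H_{\tilde{\Omega}}^{1/2}}^p$. Choosing $\epsilon$ with $\epsilon Q_0<\frac12$ gives $\frac12\|u\|_{H_{\tilde{\Omega}}^{1/2}}^2\le C'\|u\|_{H_{\tilde{\Omega}}^{1/2}}^p$ with $p>2$, hence $\|u\|_{H_{\tilde{\Omega}}^{1/2}}\ge\rho_0$ for some $\rho_0>0$; combined with the alternative $\|u\|_{H_{\tilde{\Omega}}^{1/2}}\ge\frac{1}{\sqrt2\,\xi}$ this gives $\|u\|_{H_{\tilde{\Omega}}^{1/2}}\ge\rho:=\min\{\rho_0,\frac{1}{\sqrt2\,\xi}\}>0$ for all $u\in\mathcal N$. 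For the sign of $I$, using $I'(u)u=0$,
$$I(u)=I(u)-\frac1\theta I'(u)u=\Bigl(\frac12-\frac1\theta\Bigr)\|u\|_{H_{\tilde{\Omega}}^{1/2}}^2+\int_{\R\setminus(a,b)}Q(x)\Bigl[\frac1\theta f(u)u-F(u)\Bigr]dx\ge\Bigl(\frac12-\frac1\theta\Bigr)\rho^2>0,$$
since $\theta>2$, $Q\ge\tilde Q>0$ by $(Q_1)$, and $\frac1\theta f(s)s-F(s)\ge0$ by $(f_2)$.

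For part $(ii)$, let $u\in\mathcal M$. Since $t\mapsto t^{\pm}$ is $1$-Lipschitz one has $|u^{\pm}(x)-u^{\pm}(y)|\le|u(x)-u(y)|$ and $|u^{\pm}|\le|u|$ pointwise, so $u^{\pm}\in H_{\tilde{\Omega}}^{1/2}$; moreover (\ref{N05}) gives $I'(u^{\pm})u^{\pm}\le0$, that is, $\|u^{\pm}\|_{H_{\tilde{\Omega}}^{1/2}}^2\le\int_{\R\setminus(a,b)}Q(x)f(u^{\pm})u^{\pm}\,dx$, where $f(u^{+})u^{+}\ge0$ and $f(u^{-})u^{-}\ge0$ because $f$ is odd and $sf(s)>0$ for $s\ne0$ by $(f_2)$. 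Repeating the estimate of part $(i)$ verbatim with $u$ replaced by $u^{\pm}$ (the extension operator applies to $u^{\pm}\in H_{\tilde{\Omega}}^{1/2}$ as well, and $\|Eu^{\pm}\|_{1/2}\le\xi\|u^{\pm}\|_{H_{\tilde{\Omega}}^{1/2}}$) yields $\|u^{\pm}\|_{H_{\tilde{\Omega}}^{1/2}}\ge\rho$ after shrinking $\rho$ if necessary. Taking $\rho$ to be the minimum of the finitely many positive constants obtained above proves the lemma.

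The only genuinely delicate point is the uniformity of the constant in Lemma \ref{lm04}: a priori it depends on $\|\cdot\|_{1/2}$, so it cannot be applied blindly to arbitrary elements of $\mathcal N$ or $\mathcal M$. The remedy used above is to split according to whether the relevant norm is below or above the threshold $\frac{1}{\sqrt2\,\xi}$: above it there is nothing to prove, while below it the extended function lies in the fixed ball $\{\|v\|_{1/2}\le1/\sqrt2\}$, on which Lemma \ref{lm03} (hence Lemma \ref{lm04}) furnishes a constant independent of $u$. A minor check is that (\ref{N03})--(\ref{N05}) are being used with the sign convention for which $\frac1\pi\iint_{\R^2\setminus(a,b)^2}\frac{u^{+}(x)u^{-}(y)+u^{-}(x)u^{+}(y)}{|x-y|^2}\,dy\,dx\le0$, so that $I'(u)u^{\pm}=0$ on $\mathcal M$ indeed forces $I'(u^{\pm})u^{\pm}\le0$.
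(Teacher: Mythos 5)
Your proof is correct and follows essentially the same route as the paper: the Nehari identity combined with the growth estimate from $(f_1)$, $(f_3)$, the extension operator $E$, and the fractional Moser--Trudinger inequality (\ref{I03}) to control the exponential term when the $H_{\tilde{\Omega}}^{1/2}$-norm is small. The only differences are presentational --- you argue directly via a dichotomy at the threshold $\frac{1}{\sqrt{2}\xi}$ whereas the paper argues by contradiction along a sequence with norm tending to zero (and uses Cauchy--Schwarz with $e^{2\pi u^2}-1$ instead of a H\"older exponent close to $1$) --- and you spell out the $I(u)>0$ assertion and part $(ii)$, which the paper leaves implicit.
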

\begin{proof}
\begin{enumerate}
\item[$(i)$] By ($f_3$), given $p>2$ and $\epsilon >0$, there exists $C_\epsilon >0$ such that 
\begin{equation}\label{N06}
|f(s)| \leq \epsilon |s| + C_\epsilon |s|^{p-1}\left(e^{\pi s^2} - 1 \right),
\end{equation}
By contradiction, suppose that there exists a sequence $(u_n)\subset \mathcal{N}$ such that 
\begin{equation}\label{N07}
\|u_n\|_{H_{\tilde{\Omega}}^{1/2}}\to 0\;\;\mbox{as}\;\;n\to \infty.
\end{equation} 
Since $u_n\in \mathcal{N}$, (\ref{N06}) together with H\"older inequality yields 
$$
\begin{aligned}
\|u_n\|_{H_{\tilde{\Omega}}^{1/2}}^{2} &=\int_{\R\setminus (a,b)}Q(x)f(u_n)u_ndx\\
&\leq \|Q\|_{\infty}S_2^2 \epsilon \|u_n\|_{H_{\tilde{\Omega}}^{1/2}}^{2} + \|Q\|_{\infty}S_{2p}^{p}C_\epsilon \|u_n\|_{H_{\tilde{\Omega}}^{1/2}}^{p} \left( \int_{\R\setminus (a,b)} \left( e^{2\pi u_n^2} - 1\right)dx\right)^{1/2},
\end{aligned}
$$
or equivalently
$$
(1-\|Q\|_{\infty}S_{2}^{2}\epsilon)\|u_n\|_{H_{\tilde{\Omega}}^{1/2}}^{2} \leq \|Q\|_{\infty}S_{2p}^{p}C_\epsilon \|u_n\|_{H_{\tilde{\Omega}}^{1/2}}^{p} \left( \int_{\R\setminus (a,b)} \left( e^{2\pi u_n^2} - 1\right)dx\right)^{1/2}.
$$
Taking $\epsilon>0$ small enough such that 
$$
\tilde{C} = \frac{1-\|Q\|_{\infty}S_{2}^{2}\epsilon}{\|Q\|_{\infty}S_{2p}^{p}C_\epsilon} >0,
$$ 
we get
\begin{equation}\label{N08}
0< \tilde{C} \leq \|u_n\|_{H_{\tilde{\Omega}}^{1/2}}^{p-2} \left( \int_{\R\setminus (a,b)} \left( e^{2\pi u_n^2} - 1\right)dx\right)^{1/2}. 
\end{equation}
Setting $v_n = Eu_n$, we get  
$$
\|v_n\|_{1/2} \leq \xi \|u_n\|_{H_{\tilde{\Omega}}^{1/2}}.
$$
Since $\|u_n\|_{H_{\tilde{\Omega}}^{1/2}}\to 0$ as $n\to \infty$, there exists $n_0\in \N$ such that 
$$
2 \|v_n\|_{1/2}^{2} < 1,\;\;\forall n\geq n_0.
$$
Then by (\ref{I03}), 
$$
\int_{\R}\left(e^{2\pi |v_n|^2} - 1 \right)dx = \int_{\R} \left(e^{2\pi \|v_n\|_{1/2}^{2}\left|\frac{v_n}{\|v_n\|_{1/2}} \right|^2}-1 \right) \leq C\;\;\mbox{for every}\;\,n\geq n_0, 
$$
and so, 
$$
\int_{\R\setminus (a,b)}\left( e^{2\pi |u_n|^2} - 1\right)dx \leq C, \quad \forall n\geq n_0.
$$ 
Consequently, by (\ref{N08}),
$$
0 < \left( \frac{\tilde{C}}{\sqrt{C}}\right)^{\frac{1}{p-2}} \leq \|u_n\|_{H_{\tilde{\Omega}}^{1/2}},\quad \forall n\geq n_0,
$$
which is a contradiction. 
\end{enumerate}
\end{proof}

\begin{lemma}\label{Nlm02}
Suppose that $(Q_1)$ and $(Q_2)$ hold. Then 
\begin{equation}\label{N09}
c < c_1 + c_\infty,
\end{equation}
for $R$ given in $(Q_2)$ large enough. 
\end{lemma}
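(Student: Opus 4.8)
The plan is to construct a suitable test function in $\mathcal{M}$ whose energy is strictly below $c_1 + c_\infty$, using the ground state $u_1$ of $(P)$ together with a translated copy of the ground state $u_\infty$ of $(P_\infty)$, placed far away on the interval $(R+\sigma_R, 2R+\sigma_R)$ prescribed by $(Q_2)$. Concretely, I would fix a nonnegative ground state $u_1$ (a critical point of $I$ at level $c_1$, obtained in Theorem \ref{main1}) and, exploiting that $I(|u_1|)\le I(u_1)$ together with the maximum principle, arrange $u_1 \ge 0$; then set $w_R(x) = -u_\infty(x - \sigma_R)$ (so it is supported mass is concentrated near $x\approx\sigma_R$, well outside $(a,b)$ since $|\sigma_R|>3R>|a|+|b|$), possibly with a cutoff so that $u_1$ and $w_R$ have essentially disjoint supports up to a small tail. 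The candidate is $v_R = s\,u_1 + \tau\, w_R$ with $s,\tau>0$ chosen so that $v_R\in\mathcal{M}$, i.e. $I'(v_R)v_R^+ = I'(v_R)v_R^- = 0$; since $v_R^+ = s u_1$ and $v_R^- = \tau w_R$ essentially decouple, the existence of such $s,\tau$ near $1$ follows from the Nehari-type arguments already used for $\mathcal{N}_\infty$ and $\mathcal{N}$ (monotonicity from $(f_4)$).

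Next I would estimate $I(v_R)$. Using \eqref{N02} and the near-disjointness of supports,
\[
I(v_R) = I(s u_1) + I(\tau w_R) - \frac{1}{2\pi}\iint_{\R^{2}\setminus (a,b)^2}\frac{v_R^+(x)v_R^-(y) + v_R^-(x)v_R^+(y)}{|x-y|^{2}}dy dx + o_R(1),
\]
where the cross term is nonpositive, hence $I(v_R) \le I(su_1) + I(\tau w_R) + o_R(1) \le c_1 + I(\tau w_R) + o_R(1)$. The point is that $I(\tau w_R)$ is \emph{not} close to $c_\infty$: because $w_R$ is supported where $Q(x)-\tilde Q \ge CR^\gamma$ by $(Q_2)$, one has
\[
I(\tau w_R) \le I_\infty(\tau w_R) - \int_{R+\sigma_R}^{2R+\sigma_R}\big(Q(x)-\tilde Q\big)F(\tau w_R)\,dx + (\text{correction from the removed }(a,b)\text{ part}),
\]
and since $I_\infty(\tau w_R)\le c_\infty + o_R(1)$ (as $\tau\to1$ and $w_R$ is a translate of $u_\infty$), it remains to show the gain term $\int_{R+\sigma_R}^{2R+\sigma_R}(Q-\tilde Q)F(\tau w_R)\,dx$ dominates all the errors. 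Here I use the lower bound $F(s)\ge \frac{C_p}{p}s^p$ coming from $(f_5)$, the decay estimate \eqref{decaimento} $u_\infty(x)\ge C_1|x|^{-2}$, and the fact that on the shifted interval $|x-\sigma_R|$ ranges over $[R,2R]$, so that $\int (Q-\tilde Q)F(\tau w_R)\,dx \gtrsim R^\gamma \cdot R\cdot R^{-2p} = R^{\gamma+1-2p}$, which is a genuinely positive power since $\gamma > 2p-1$.

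The main obstacle — and the reason the hypothesis $\gamma > 2p-1$ is calibrated the way it is — is controlling the error terms against this gain: the correction from excising $(a,b)$ in the functional (the terms $\frac{1}{4\pi}\iint_{(a,b)^2}\cdots$ and $\frac12\int_a^b\cdots$ applied to $w_R$), the discrepancy $I_\infty(\tau w_R)-c_\infty$, the cutoff tail of $w_R$, and the fact that we do not have exponential decay of $u_\infty$ but only the polynomial bound \eqref{decaimento}. All of these must be shown to be $o(R^{\gamma+1-2p})$ or of strictly smaller order; the lack of exponential decay means the tail contributions of $u_\infty$ outside $(R+\sigma_R,2R+\sigma_R)$ decay only polynomially, so one needs $\gamma$ large enough that the prescribed-interval gain still wins — this is exactly the role of $(Q_2)$ and why the statement only asserts the inequality for $R\ge R_0$ large. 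Once these estimates are assembled, for $R$ sufficiently large we obtain $I(v_R) < c_1 + c_\infty$, and since $v_R\in\mathcal{M}$, $c = \inf_{\mathcal{M}} I \le I(v_R) < c_1 + c_\infty$, completing the proof.
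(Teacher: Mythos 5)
Your overall strategy coincides with the paper's: test $\mathcal{M}$ with a function of the form $\alpha u_1 - \tau u_\infty(\cdot-\sigma_R)$ and beat $c_1+c_\infty$ thanks to the gain $\int_{R+\sigma_R}^{2R+\sigma_R}(Q-\tilde{Q})F \gtrsim R^{\gamma}\cdot R\cdot R^{-2p}=R^{\gamma-2p+1}$ coming from $(Q_2)$, $(f_5)$ and the polynomial decay \eqref{decaimento}. However, two steps as you wrote them do not work. First, the projection into $\mathcal{M}$: since $u_1$ and $u_\infty$ are strictly positive, the positive and negative parts of $v_R=s u_1-\tau u_\infty(\cdot-\sigma_R)$ are \emph{not} $s u_1$ and $-\tau u_\infty(\cdot-\sigma_R)$, and even if you force disjoint supports by a cutoff, the nonlocal terms in \eqref{N03}--\eqref{N04} still couple the two conditions $I'(v_R)v_R^{+}=0$ and $I'(v_R)v_R^{-}=0$; they cannot be solved one at a time by the one-dimensional Nehari/monotonicity argument you invoke. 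A genuinely two-dimensional argument is needed: the paper proves the sign conditions $h_{\sigma}^{+}(\frac12,\tau)>0$, $h_{\sigma}^{+}(2,\tau)<0$ (and the analogues for $h_{\sigma}^{-}$), uniformly for the other parameter in $[\frac12,2]$, through a series of limits as $|\sigma|\to\infty$, and then applies Miranda's theorem to $(h_\sigma^+,h_\sigma^-)$. Note also that cutting off $u_1$ destroys the identity $\sup_{s\ge 0}I(su_1)=c_1$, so a cutoff creates new error terms rather than removing them.

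Second, the energy upper bound. With the paper's conventions the cross term satisfies $\iint_{\R^2\setminus(a,b)^2}\frac{u^{+}(x)u^{-}(y)+u^{-}(x)u^{+}(y)}{|x-y|^{2}}\,dy\,dx\le 0$, so \eqref{N02} gives $I(u)\ge I(u^{+})+I(u^{-})$: dropping the cross term yields a \emph{lower} bound, and your step ``the cross term is nonpositive, hence $I(v_R)\le I(su_1)+I(\tau w_R)+o_R(1)$'' has the inequality backwards. To obtain the needed upper bound one must control the interaction terms quantitatively; the paper does this not through \eqref{N02} but by expanding $I(\alpha u_1-\tau u_\sigma)$ directly, handling the nonlinear coupling with Lemma \ref{alves} and the quadratic coupling via positivity and the equation for $u_1$, and then showing — using the $R$-independent $L^\infty$ bound of Lemma \ref{Rlm01} and \eqref{decaimento} — that all interaction terms, the $(a,b)$ corrections, and $\sup_{\tau\ge 0}I_\infty(\tau u_\sigma)=c_\infty$, $\sup_{\alpha\ge 0}I(\alpha u_1)=c_1$ contribute at most constants independent of $R$, so the growing gain $CR^{\gamma-2p+1}$ wins for $R\ge R_0$. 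Your bookkeeping of errors as $o(R^{\gamma+1-2p})$ aims at the same conclusion, but the uniform-in-$R$ estimates you would need (cross terms, cutoff tails, excised-interval terms) are exactly the content of the paper's proof and are missing from yours; moreover the bound must hold for all $(\alpha,\tau)\in[\frac12,2]^2$, not just for parameters ``near $1$'', since the Miranda point is only known to lie in that square.
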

\begin{proof}
Let $u_\infty$, $u_1$ be a positive ground state solution solution of $(P_\infty)$ and $(P)$ respectively.   For $\alpha, \tau >0$ and $R>0$, let $\sigma=\sigma_R$ given in $(Q_2)$, $u_\sigma(x) = u_\infty(x-\sigma)$, 
$w_\sigma(x) = \alpha u_1(x) - \tau u_\sigma(x),$ and 
\begin{equation}\label{N10}
h_{\sigma}^{\pm}(\alpha, \tau) = I'(w_\sigma)w_{\sigma}^{\pm}.
\end{equation}
Recalling that  $I'(u_1)u_1 = 0$, ($f_4$) leads to
$$
\begin{aligned}
I'(\frac{u_1}{2})\frac{u_1}{2} 
& = \frac{1}{4}\int_{\R\setminus (a,b)} Q(x)f(u_1)u_1 dx - \int_{\R\setminus (a,b)} Q(x)f(\frac{u_1}{2})\frac{u_1}{2}dx\\
& = \int_{\R\setminus (a,b)}Q(x)\left(\frac{f(u_1)}{u_1} - \frac{f(u_1/2}{u_1/2} \right)\left(\frac{u_1}{2} \right)^2dx>0;\\
I'(2u_1)2u_1 
& = 4\int_{\R\setminus (a,b)}Q(x)f(u_1)u_1dx - \int_{\R\setminus (a,b)}Q(x)f(2u_1)2u_1dx\\
& = \int_{\R\setminus (a,b)} Q(x)\left(\frac{f(u_1)}{u_1} - \frac{f(2u_1)}{2u_1} \right)(2u_1)^2dx <0.
\end{aligned}
$$ 
We claim that there exists $\sigma_0>0$ such  
\begin{equation}\label{N11}
I'(\frac{u_\sigma}{2})\frac{u_\sigma}{2} >0,\;\;\mbox{for all}\;\;|\sigma|> \sigma_0.
\end{equation}
In fact, note that 
$$
\begin{aligned}
I'(\frac{u_\sigma}{2})\frac{u_\sigma}{2} 
&= \frac{1}{2}\left(\frac{1}{2\pi}\iint_{\R^2} \frac{|(u_\sigma/2)(x) - (u_\sigma/2)(y)|^2}{|x-y|^{2}}dy dx + \int_{\R} \left( \frac{u_\sigma}{2}\right)^2dx \right) - \int_{\R} Q(x)f(\frac{u_\sigma}{2})\frac{u_\sigma}{2}dx\\
& - \left(\frac{1}{2}\left(\frac{1}{2\pi}\iint_{(a,b)^2} \frac{|(u_\sigma/2)(x) - (u_\sigma/2)(y)|^2}{|x-y|^{2}}dy dx + \int_{(a,b)} \left( \frac{u_\sigma}{2}\right)^2dx \right) - \int_{(a,b)} Q(x)f(\frac{u_\sigma}{2})\frac{u_\sigma}{2}dx \right)\\
& = I_1 - I_2.
\end{aligned}
$$
Making the change of variable $\tilde{x} = x-\sigma$ and $\tilde{y} = y-\sigma$, we get 
$$
I_1 = \frac{1}{4\pi}\iint_{\R^2} \frac{|(u_\infty/2)(x) - (u_\infty/2)(y)|^2}{|x-y|^2} + \frac{1}{2}\int_{\R}\left(\frac{u_{\infty}}{2}\right)^{2}dx - \int_{\R}Q(x+\sigma)f(\frac{u_\infty}{2})\frac{u_\infty}{2}dx.
$$
Note that, for a.e. $x\in \R$,
$$
Q(x+\sigma)f(\frac{u_\infty(x)}{2})\frac{u_\infty(x)}{2} \to \tilde{Q}f(\frac{u_\infty(x)}{2})\frac{u_\infty(x)}{2}\;\,\mbox{as}\;\,|\sigma| \to \infty
$$ 
and
$$
Q(x+\sigma)f(\frac{u_\infty(x)}{2})\frac{u_\infty(x)}{2} \leq \|Q\|_{\infty} f(\frac{u_\infty(x)}{2})\frac{u_\infty(x)}{2} \in L^1(\R).
$$  
Since $f(\frac{u_\infty}{2})\frac{u_\infty}{2} \in L^1(\R)$, the Lebesgue's Theorem gives 
$$
\int_{\R}Q(x+\sigma)f(\frac{u_\infty(x)}{2})\frac{u_\infty(x)}{2}dx \to \int_{\R}\tilde{Q}f(\frac{u_\infty(x)}{2})\frac{u_\infty(x)}{2}dx\;\;\mbox{as}\;\;|\sigma| \to \infty,
$$ 
and so, 
\begin{equation}\label{N12} 
I_1 \to I'(\frac{u_\infty}{2})\frac{u_\infty}{2} >0\;\;\mbox{as}\;\;|\sigma|\to \infty.
 \end{equation}
Since  $u_\infty \in H^{1/2}(\R)$ and
 $$
\chi_{(a-\sigma, b-\sigma)}(x) \to 0\;\;\mbox{as $|\sigma| \to \infty$, for a.e. $x\in \R$},
$$
a similar argument works to prove that 
\begin{equation}\label{N13}
I_2 \to 0\;\,\mbox{as}\;\,|\sigma| \to \infty.
\end{equation}
Now (\ref{N11}) follows from (\ref{N12}) and (\ref{N13}). 

Now we claim that there exists $\sigma_0>0$ large enough, such that 
\begin{equation}\label{N14}
\begin{cases}
h_{\sigma}^{+}(\frac{1}{2}, \tau) >0\\
h_{\sigma}^{+}(2, \tau)<0
\end{cases}
\end{equation}  
for $|\sigma|> \sigma_0$ and $\tau \in [\frac{1}{2}, 2]$. A similar argument can be used to show that for  $\alpha \in [\frac{1}{2}, 2]$ the inequality below holds  
\begin{equation}\label{N15}
\begin{cases}
h_{\sigma}^{-}(\alpha, \frac{1}{2})>0\\
h_{\sigma}^{-}(\alpha, 2)< 0.
\end{cases}
\end{equation} 
In fact, note that
$$
\begin{aligned}
h_{\sigma}^{+}(\frac{1}{2}, \tau)&= I'(w_\sigma)w_{\sigma}^{+}\\
& = \frac{1}{2\pi}\iint_{\R^2\setminus (a,b)^2}\frac{[\frac{u_1}{2}(x) - \frac{u_1}{2}(y)][w_{\sigma}^{+}(x) - w_{\sigma}^{+}(y)]}{|x-y|^{2}}dy dx\\
&- \frac{1}{2\pi}\iint_{\R^2\setminus (a,b)^2}\frac{[\tau u_\sigma(x) - \tau u_\sigma(y)][w_{\sigma}^{+}(x) - w_{\sigma}^{+}(y)]}{|x-y|^{2}}dy dx\\
&+ \int_{\R \setminus (a,b)} w_\sigma(x)w_\sigma^+dx- \int_{\R\setminus (a,b)}Q(x)f(w_\sigma)w_\sigma^+dx.
\end{aligned}
$$ 
In the sequel we denote by $\Lambda_\sigma^1$ and $\Lambda_\sigma^2$ the following numbers
$$
\Lambda_\sigma^1 = \frac{1}{2\pi}\iint_{\R^2\setminus (a,b)^2}\frac{[\frac{u_1}{2}(x) - \frac{u_1}{2}(y)][w_{\sigma}^{+}(x) - w_{\sigma}^{+}(y)]}{|x-y|^{2}}dy dx
$$
and
$$
\Lambda_\sigma^2 = \frac{1}{2\pi}\iint_{\R^2\setminus (a,b)^2}\frac{[\tau u_\sigma(x) - \tau u_\sigma(y)][w_{\sigma}^{+}(x) - w_{\sigma}^{+}(y)]}{|x-y|^{2}}dy dx.
$$
Now, we analyze the behavior of $\Lambda_\sigma^1$ as $|\sigma|$ is large enough. Since
$$
w_{\sigma}^{+}(x) \to \frac{1}{2}u_1(x)\;\;\mbox{a.e.}\;\,x\in \R\setminus (a,b)\;\;\mbox{as $|\sigma| \to \infty$},
$$
and 
$$
\|w_{\sigma}^{+}\|_{H_{\tilde{\Omega}}^{1/2}} \leq \|w_\sigma\|_{H_{\tilde{\Omega}}^{1/2}} < \infty,
$$
we can conclude that
$$
w_{\sigma}^{+} \rightharpoonup  \frac{1}{2}u_1\;\;\mbox{as}\;\;|\sigma|\to \infty,
$$
which implies 
\begin{equation}\label{N16}
\Lambda_\sigma^1 \to \frac{1}{2\pi}\iint_{\R^2\setminus (a,b)^2} \frac{|\frac{1}{2}u_1(x) - \frac{1}{2}u_1(y)|^2}{|x-y|^2}dy dx\;\;\mbox{as}\;\;|\sigma|\to \infty. 
\end{equation}
On the other hand, setting  $h_\sigma (x) = \frac{1}{2}u_1(x+\sigma) - \tau u_\infty(x)$, we obtain 
$$
\Lambda_{\sigma}^{2} = \frac{1}{2\pi}\iint_{\R^2\setminus (a-\sigma, b-\sigma)^2} \frac{[\tau u_\infty(x) - \tau u_\infty(y)][h_{\sigma}^{+}(x) - h_{\sigma}^{+}(y)]}{|x-y|^{2}}dy dx.
$$
For each $T>0$ fixed, let us denote by $\mathbb{B}(0, T) = \{(x,y)\in \R^2:\;\;|(x,y)|< T\}$, the ball in $\R^2$ with center $(0,0)$ and radius $T>0$. In the following, we take $|\sigma|$ large enough such that 
$$
\mathbb{B}(0,T) \subset \R^2 \setminus (a-\sigma, b-\sigma)^2
$$
and the decomposition
$$
\R^2\setminus (a-\sigma, b-\sigma)^2 = \mathbb{B}(0,T) \cup \R^2 \setminus [\mathbb{B}(0,T) \cup (a-\sigma, b-\sigma)^2].
$$
From this, it follows that 
$$
\Lambda_{\sigma}^{2} = \Gamma_{\sigma}^{1} + \Gamma_{\sigma}^{2},
$$
where
$$
\begin{aligned}
\Gamma_{\sigma}^{1} & = \frac{1}{2\pi}\iint_{\mathbb{B}(0,T)}\frac{[\tau u_\infty(x) - \tau u_\infty(y)][h_{\sigma}^{+}(x) - h_{\sigma}^{+}(y)]}{|x-y|^{2}}dy dx;\\
\Gamma_{\sigma}^{2} &= \frac{1}{2\pi}\iint_{\R^2\setminus [\mathbb{B}(0,T)\cup (a-\sigma, b-\sigma)^2]} \frac{[\tau u_\infty(x) - \tau u_\infty(y)][h_{\sigma}^{+}(x) - h_{\sigma}^{+}(y)]}{|x-y|^{2}}dy dx.
\end{aligned}
$$
Following the same steps of (\ref{N16}), we have 
\begin{equation}\label{N17}
\Gamma_{\sigma}^{1} \to 0\;\;\mbox{as}\;\;|\sigma|\to \infty.
\end{equation}
Now, since $u_\infty\in H^{1/2}(\R)$, by H\"older inequality, 
\begin{equation}\label{N18}
\begin{aligned}
\Gamma_{\sigma}^{2} 
&\leq \frac{1}{2\pi}\left( \iint_{\R^2\setminus [\mathbb{B}(0,T)\cup (a-\sigma, b-\sigma)^2]}  \frac{|\tau u_\infty(x) - \tau u_\infty(y)|^2}{|x-y|^2}\right)^{1/2} \left(\iint_{\R^2\setminus [\mathbb{B}(0,T)\cup (a-\sigma, b-\sigma)^2]} \frac{|h_{\sigma}^{+}(x) - h_{\sigma}^{+}(y)|^2}{|x-y|^2}dy dx \right)^{1/2}\\
&\leq \frac{1}{2\pi}\left( \iint_{\R^2\setminus \mathbb{B}(0,T)}  \frac{|\tau u_\infty(x) - \tau u_\infty(y)|^2}{|x-y|^2}\right)^{1/2} \left(\iint_{\R^2\setminus (a-\sigma, b-\sigma)^2} \frac{|h_{\sigma}^{+}(x) - h_{\sigma}^{+}(y)|^2}{|x-y|^2}dy dx \right)^{1/2}\\
&\to 0\;\;\mbox{as}\;\;T\to \infty.
\end{aligned}
\end{equation}
From (\ref{N17}) and (\ref{N18}),  
\begin{equation}\label{N19}
\Lambda_{\sigma}^{2} \to 0\;\,\mbox{as}\;\;|\sigma| \to \infty.
\end{equation}
On the other hand, note that 
$$
\int_{\R \setminus (a,b)} w_\sigma(x)w_{\sigma}^+(x)dx = \int_{\R}\chi_{\Omega_\sigma}(x)w_\sigma^2(x)dx,
$$ 
where $\Omega_\sigma = \{x\in \R\setminus (a,b):\;\,\frac{1}{2}u_1(x) - \tau \tilde{u}(x-\sigma)>0\}$. Since $u_\infty(x-\sigma) \to 0$ a.e. in $\R$ as $|\sigma|\to \infty$ and $u_1>0$, we have 
$$
\chi_{\Omega_\sigma}(x)\to 1\;\;\mbox{a.e. in $\R\setminus (a,b)$}.
$$
Furthermore 
$$
\begin{aligned}
& \chi_{\Omega_\sigma}(x)w_\sigma^2(x)\leq w_\sigma^2(x)\in L^1(\R \setminus (a,b)),\;\;\mbox{and}\\ 
& \chi_{\Omega_\sigma}(x) w_{\sigma}^{2} \to (\frac{1}{2}u_1(x))^2\;\;\mbox{a.e. in $\R\setminus (a,b)$}.
\end{aligned}
$$
Thereby, by Lebesgue's Theorem, 
\begin{equation}\label{N20}
\int_{\R \setminus (a,b)} w_\sigma(x)w_{\sigma}^{+}(x)dx \to \int_{\R \setminus (a,b)} (\frac{1}{2}u_1(x))^2dx\;\;\mbox{as $|\sigma|\to \infty$}.
\end{equation} 
Finally, we are going to show that 
\begin{equation}\label{N21}
\int_{\R\setminus (a,b)}Q(x)f(w_\sigma)w_\sigma^+dx \to \int_{\R\setminus (a,b)}Q(x)f(\frac{u_1}{2})\frac{u_1}{2}dx\;\,\mbox{as}\;\;|\sigma|\to \infty.
\end{equation}
In fact, note that for $\tau \in [\frac{1}{2}, 2]$, since the function $t\to f(t)t$ is increasing in $[0,+\infty]$ and $u_\infty(x) \to 0$ a.e. in $\R$, we get
$$
Q(x)f(w_\sigma)(w_\sigma)^+ \leq Q(x)f(\frac{u_1}{2})\frac{u_1}{2} \in L^1(\R\setminus (a,b))
$$
and
$$
Q(x)f(w_\sigma(x))w_\sigma(x) \to Q(x)f(\frac{u_1}{2}(x))\frac{u_1}{2}(x)\;\;\mbox{a.e. $x\in \R\setminus (a,b)$}, \quad \mbox{as} \quad |\sigma| \to +\infty.
$$
In the above limit we are using strongly the fact that $u_\infty(x) \to 0$ when $|x| \to +\infty$. Hence, by Lebesgue's Theorem, 
\begin{equation}\label{N23}
\int_{\R\setminus (a,b)}Q(x)f(w_\sigma)w_{\sigma}dx \to \int_{\R\setminus (a,b)}Q(x)f(\frac{u_1}{2})\frac{u_1}{2}dx\;\;\mbox{as}\;\;|\sigma|\to \infty.
\end{equation}
In the same way, we can show that 
\begin{equation}\label{N24}
\int_{\{\frac{u_1}{2} - \frac{u_\sigma}{2}\geq 0\}} Q(x)f(w_\sigma)w_\sigma dx \to \int_{\R^\setminus (a,b)}Q(x)f(\frac{u_1}{2})\frac{u_1}{2}dx\;\;\mbox{as $|\sigma| \to \infty$}.
\end{equation}
Combining (\ref{N23}) with (\ref{N24}), we get (\ref{N21}). 
Therefore, by (\ref{N16}), (\ref{N19}), (\ref{N20}) and (\ref{N21})
$$
h_{\sigma}^{+}(\frac{1}{2}, \tau) = I'(w_\sigma)w_\sigma^+ \to I'(\frac{u_1}{2})\frac{u_1}{2}>0\;\;\mbox{as}\;\;|\sigma|\to +\infty \quad \mbox{and} \quad  \tau \in [\frac{1}{2},2].
$$ 
From this, there is $\sigma_0>0$ large enough such that 
$$
h_{\sigma}^{+}(\frac{1}{2}, \tau)>0\;\;\mbox{for $\tau \in [\frac{1}{2}, 2]$ and $|\sigma|>\sigma_0$}.
$$
In the same way, we can show that $h_{\sigma}^{+}(2,\tau) < 0$ and (\ref{N15}) holds.

\noindent 
From (\ref{N14}) and (\ref{N15}), we can apply the Mean Value Theorem due to Miranda \cite{CM} to find $\alpha^*, \tau^*\in [\frac{1}{2}, 2]$, which depend on $\sigma$, such that 
$$
h_{\sigma}^{\pm}(\alpha^*, \tau^*) = 0\;\;\mbox{for any}\;\;|\sigma|\geq \sigma_0.
$$
Thus,
$$
\alpha^*u_1 - \tau^*u_\sigma\in \mathcal{M},\;\;\mbox{for}\;\;|\sigma|\geq \sigma_0.
$$
By the definition of $c$, it suffices to show that 
\begin{equation}\label{N25}
\sup_{\alpha, \tau \in [\frac{1}{2}, 2]}I(\alpha u_1 - \tau u_\sigma) < c_1 + c_\infty\;\;\mbox{for}\;\;|\sigma|\geq \sigma_0.
\end{equation}
We would like to emphasize that the last inequality holds for $\sigma$ large, then by $(Q_2)$, it is enough to consider $R$ large enough to have $|\sigma|$ large enough.

Indeed, by Lemma \ref{alves},
$$
\begin{aligned}
I(\alpha u_1 - \tau u_\sigma) 
&\leq \frac{1}{2}\left(\frac{1}{2\pi}\iint_{\R^2\setminus (a,b)^2} \frac{|(\alpha u_1)(x) - (\alpha u_1)(y)|^2}{|x-y|^2}dy dx + \int_{\R\setminus (a,b)} (\alpha u_1)^2(x)dx\right)\\
&+\frac{1}{2}\left(\frac{1}{2\pi}\iint_{\R^2} \frac{|(\tau u_\sigma)(x) - (\tau u_\sigma)(y)|^2}{|x-y|^2}dy dx + \int_{\R} (\tau u_\sigma)^2(x)dx\right)\\
&- \int_{\R\setminus (a,b)} Q(x)[F(\alpha u_1) + F(\tau u_\sigma) - 2(f(\alpha u_1)\tau u_\sigma + f(\tau u_\sigma)\alpha u_1)]dx\\
&\leq I(\alpha u_1) + I_\infty(\tau u_\sigma) - \int_{\R \setminus (a,b)} [Q(x) - \tilde{Q}]F(\tau u_\sigma)dx\\
& + C_1\int_{\R\setminus (a,b)} (f(\alpha u_1)\tau u_\sigma + f(\tau u_\sigma)\alpha u_1)dx + \int_{a}^{b}\tilde{Q}F(\tau u_\sigma)dx.
\end{aligned}
$$
Therefore, 
\begin{equation}\label{N26}
\begin{aligned}
\sup_{\alpha, \tau \in [\frac{1}{2}, 2]} I(\alpha u_1 - \tau u_\sigma)&\leq \sup_{\alpha \geq 0}I(\alpha u_1) + \sup_{\tau \geq 0} I(\tau u_\sigma) - \int_{\R \setminus (a,b)} [Q(x) - \tilde{Q}]F( \frac{1}{2}u_\sigma)dx\\
& + C_1\int_{\R\setminus (a,b)} (f(\alpha u_1)\tau u_\sigma + f(\tau u_\sigma)\alpha u_1)dx + \int_{a}^{b}\tilde{Q}F(2 u_\sigma)dx.
\end{aligned}
\end{equation} 
Now, from ($f_5$), $(Q_2)$ and (\ref{decaimento}), we get   
$$
\begin{aligned}
\int_{\R \setminus (a,b)} [Q(x) - \tilde{Q}]F( \frac{1}{2}u_\sigma)dx &\geq \frac{CC_p}{p} \int_{\R \setminus (a-\sigma , b-\sigma)} [Q(x+\sigma) - \tilde{Q}]{u_{\infty}^p(x)} dx\\
&\geq CR^{\gamma}\int_{R}^{2R}u_{\infty}^p(x)\, dx\geq  C_1 R^{\gamma-2p+1}\\
\end{aligned}
$$
for $R$ large enough. By (\ref{GS05}),$(Q_2)$ and using the fact that $u_\infty \in L^{\infty}(\R)$, we get  
$$
\begin{aligned}
\int_{a}^{b} \tilde{Q}F(2u_\sigma)dx &\leq \tilde{Q}\int_{a}^{b} \frac{\epsilon}{q+1}|2u_\sigma(x)|^{q+1} dx +\tilde{Q}C_\epsilon \int_{a}^{b} |2u_\sigma(x)|\left(e^{\pi \nu u_\sigma^2(x)} -1 \right)dx\\
&\leq {C}_2 (b-a).
\end{aligned}
$$
From ($f_1$) and ($f_2$), we have 
\begin{equation}\label{N27}
\begin{aligned}
\int_{\R \setminus (a,b)} f(\alpha u_1)\tau u_\sigma dx & \leq \epsilon\tau \alpha^q \int_{\R \setminus (a,b)} |u_1(x)|^{q}u_\sigma (x)dx + C_\epsilon \tau \int_{\R \setminus (a,b)} u_\sigma (x)u_1^{q}(x) \left(e^{\pi (\alpha u_1)^2} -1 \right)dx.
\end{aligned}
\end{equation}
By H\"older inequality,  
\begin{equation}\label{N28}
\begin{aligned}
\int_{\R \setminus (a,b)} u_1^q(x)u_\infty (x-\sigma)dx &\leq \left(\int_{\R\setminus (a,b)} u_1^{q+1}(x)dx \right)^{\frac{q}{q+1}} \left( \int_{\R} u_{\infty}^{q+1}(x-\sigma)dx\right)^{\frac{1}{q+1}}\\
&\leq C^{q} \|u_1\|^{\frac{q(q-1)}{q+1}}_{L^{\infty}(\R\setminus (a,b))}\|u_1\|^{\frac{2q}{q+1}}_{L^{q2}(\R\setminus (a,b))}\|u_\infty\|_{L^{q+1}(\R)}\leq C_3.
\end{aligned}
\end{equation}
Since $\|u_1\|_{L^{\infty}(\R\setminus (a,b))}$ and $\|u_1\|_{L^{2}(\R\setminus (a,b))}$ are bounded from above by a constant that is independent of $R$, the constant  $C_3$ can be choose independent of $R$, for more details see Lemma \ref{Rlm01}. 

In a similar way, we have   
$$
\int_{\R \setminus (a,b)} u_\sigma (x)u_1^{q}(x) \left(e^{\pi (\alpha u_1)^2} -1 \right)dx \leq C_4, 
$$
where $C_5$ is a constant independent of $R$.  Therefore, 
\begin{equation}\label{N29}
\int_{\R \setminus (a,b)} f(\alpha u_1)\tau u_\sigma dx  \leq C_5
\end{equation}
where $C_5$ is a constant independent of $R$. A similar argument works to show that 
\begin{equation}\label{N30}
\int_{\R\setminus (a,b)}f(\tau u_\sigma)\alpha u_1dx \leq C_6
\end{equation}
where $C_6$ is a constant independent of $R$. From the above analysis, we derive that 
$$
\sup_{\alpha, \tau \in [\frac{1}{2}, 2]}I(\alpha u_1 - \tau u_\sigma) \leq \sup_{\alpha \geq 0} I(\alpha u_1) + \sup_{\tau \geq 0} I(\tau u_\sigma) + R^{\gamma-2p+1} \left(-C + \frac{C_7}{R^{\gamma-2p +1} }\right)
$$
where $C_7$ is a constant independent of $R$. Hence, for $R$ large enough  
\begin{equation}\label{N31}
\sup_{\alpha, \tau \in [\frac{1}{2}, 2]} I(\alpha u_1 - \tau u_\sigma) < c_1 + c_\infty,
\end{equation}  
showing that $c < c_1 + c_\infty.$
\end{proof}

Let us introduce the function $\varphi_\rho(x) = \varphi (\frac{x}{\rho})$, where $\rho\gg |b-a|$ and  
$$
\varphi (x) = \begin{cases}
1,&|x|\leq 1\\
0,&|x| \geq 2,
\end{cases}
$$
and consider the following fractional problem 
$$
\left\{
\begin{aligned}
(-\Delta)^su + u &= \varphi_\rho (x)Q(x)f(u),\;\;\mbox{in}\:\:\R\setminus (a,b)\\
\mathcal{N}_{1/2}u(x) &= 0\;\;\mbox{in}\;\;(a,b).
\end{aligned}
\right.
\eqno{(P_\rho)}
$$
Associated to problem $(P_\rho)$ we have the energy functional  
$$
I_\rho(u) = \frac{1}{2}\|u\|_{H_{\tilde{\Omega}}^{1/2}}^2 - \int_{\R\setminus (a,b)} Q(x)\varphi_\rho(x)f(u)dx.
$$
Moreover, we introduce the nodal set
$$
\mathcal{M}_\rho = \{u\in \mathcal{N}_\rho:\;\;u^{\pm} \not \equiv 0\;\;\mbox{and}\;\;I'_{\rho}(u)u^{\pm} = 0\}
$$
with
$$
\mathcal{N}_{\rho} = \{u\in H_{\tilde{\Omega}}^{s}\setminus \{0\}:\;\;I'_\rho(u)u=0\}
$$
and the number
$$
c_\rho = \inf_{u\in \mathcal{M}_\rho} I_\rho(u).
$$
By repeating a similar reasoning as used in \cite{KTKWRW}( see also \cite{AlvesSouto}), we can show that, for each $\rho \gg |b-a|$ there exists $u_\rho \in \mathcal{M}_\rho$ such that $u_{\rho}^{\pm} \not \equiv 0$ and $c_\rho = I_\rho(u_\rho)$. 

\begin{lemma}\label{Nlm03}
$$
\lim_{\rho\to +\infty}c_\rho = c = \inf_{u\in \mathcal{M}}I(u).
$$
\end{lemma}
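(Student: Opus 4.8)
The plan is to compare $I_\rho$ with $I$ through the pointwise identity
$$
I_\rho(u)-I(u)=\int_{\R\setminus(a,b)}\bigl(1-\varphi_\rho(x)\bigr)Q(x)F(u)\,dx\ \ge\ 0,
$$
valid for every $u\in H^{1/2}_{\tilde\Omega}$ because $0\le\varphi_\rho\le1$, $Q>0$ on $\R\setminus(a,b)$, and $F\ge0$ by $(f_2)$; thus $I\le I_\rho$ everywhere. I shall also use the by-now standard sign-changing Nehari machinery (built as in \cite{KTKWRW,AlvesSouto}, relying on $(f_4)$ and on the sign of the nonlocal cross terms in \eqref{N01}): for any $v$ with $v^{\pm}\not\equiv0$ there is a \emph{unique} pair $(s,t)\in(0,\infty)^2$ with $sv^{+}+tv^{-}\in\mathcal M$, this pair is the global maximizer of $(s,t)\mapsto I(sv^{+}+tv^{-})$, and the same holds with $I_\rho$ and $\mathcal M_\rho$. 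In particular $\max_{s,t\ge0}I(su^{+}+tu^{-})=I(u)$ for $u\in\mathcal M$, and $c=\inf\{\max_{s,t\ge0}I(su^{+}+tu^{-}):u^{\pm}\not\equiv0\}$, with the analogous characterization of $c_\rho$.

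\emph{Lower bound $c\le c_\rho$.} Fix $\rho\gg|b-a|$ and let $u_\rho\in\mathcal M_\rho$ with $I_\rho(u_\rho)=c_\rho$. Since $u_\rho^{\pm}\not\equiv0$, choose $\bar s,\bar t>0$ with $\bar s u_\rho^{+}+\bar t u_\rho^{-}\in\mathcal M$; then, using $I\le I_\rho$,
$$
c\le I\bigl(\bar s u_\rho^{+}+\bar t u_\rho^{-}\bigr)=\max_{s,t\ge0}I\bigl(su_\rho^{+}+tu_\rho^{-}\bigr)\le\max_{s,t\ge0}I_\rho\bigl(su_\rho^{+}+tu_\rho^{-}\bigr)=I_\rho(u_\rho)=c_\rho.
$$

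\emph{Upper bound $\limsup_{\rho\to\infty}c_\rho\le c$.} Fix an arbitrary $u\in\mathcal M$; it suffices to prove $\limsup_{\rho\to\infty}c_\rho\le I(u)$. Pick $\rho_0$ large enough that $\int_{B(0,\rho_0)\setminus(a,b)}QF(u^{+})\,dx>0$ and $\int_{B(0,\rho_0)\setminus(a,b)}QF(u^{-})\,dx>0$. For $\rho\ge\rho_0$ one has $\varphi_\rho\ge\chi_{B(0,\rho_0)}$; bounding the quadratic part of $I_\rho(su^{+}+tu^{-})$ from above and invoking the consequence $F(\lambda s)\ge\lambda^{\theta}F(s)$ ($\lambda\ge1$, $s>0$) of $(f_2)$, we obtain a radius $M>0$, \emph{independent of $\rho\ge\rho_0$}, such that $I_\rho(su^{+}+tu^{-})<0$ whenever $(s,t)\notin[0,M]^2$; hence $\max_{s,t\ge0}I_\rho(su^{+}+tu^{-})$ is attained on the compact box $[0,M]^2$. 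On $[0,M]^2$,
$$
0\le I_\rho(su^{+}+tu^{-})-I(su^{+}+tu^{-})\le\int_{\{|x|\ge\rho\}}Q\bigl[F(Mu^{+})+F(Mu^{-})\bigr]\,dx,
$$
and the right-hand side tends to $0$ as $\rho\to\infty$ by dominated convergence, the majorant being integrable since $M|u|\in H^{1/2}_{\tilde\Omega}$ (apply Lemma \ref{lm04}). Therefore $\max_{[0,M]^2}I_\rho(su^{+}+tu^{-})\to\max_{s,t\ge0}I(su^{+}+tu^{-})=I(u)$, and since $c_\rho\le\max_{s,t\ge0}I_\rho(su^{+}+tu^{-})$ we conclude $\limsup_{\rho\to\infty}c_\rho\le I(u)$. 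Taking the infimum over $u\in\mathcal M$ gives $\limsup_{\rho\to\infty}c_\rho\le c$, which combined with the lower bound yields $\lim_{\rho\to\infty}c_\rho=c$.

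The delicate point is the uniform-in-$\rho$ control in the last step: one must confine the maximizers of $(s,t)\mapsto I_\rho(su^{+}+tu^{-})$ to a box that does not depend on $\rho$ — here the Ambrosetti--Rabinowitz condition $(f_2)$ together with $\varphi_\rho\equiv1$ on $B(0,\rho_0)$ does the job — and simultaneously ensure $I_\rho\to I$ uniformly on that box, which is exactly where the Moser--Trudinger estimate of Lemma \ref{lm04} is needed to supply an $L^1$ majorant for the exponential nonlinearity. The remaining ingredients (existence and uniqueness of the Nehari projections, and monotonicity of the fibering maps $(s,t)\mapsto I(sv^{+}+tv^{-})$ and $(s,t)\mapsto I_\rho(sv^{+}+tv^{-})$) are the routine sign-changing constructions of \cite{KTKWRW,AlvesSouto} adapted to the nonlocal cross terms in \eqref{N01}.
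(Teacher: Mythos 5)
Your proof is correct and follows essentially the same route as the paper: the lower bound comes from $I\le I_\rho$ together with the Nehari projections onto $\mathcal M$ and the fiber-map maximality, and the upper bound comes from projecting a fixed element of $\mathcal M$ into the $\mathcal M_\rho$-fibers, using the Ambrosetti--Rabinowitz condition to keep the fiber parameters bounded, and dominated convergence to kill the defect term $\int_{\R\setminus(a,b)}(1-\varphi_\rho)Q\,F$. The only cosmetic difference is that you package the boundedness step as confining the fiber maxima to a $\rho$-independent box $[0,M]^2$ with uniform convergence $I_\rho\to I$ there, whereas the paper bounds the projection parameters $(t_\rho,s_\rho)$ directly; the two arguments are interchangeable.
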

\begin{proof}
Note that $I(u) \leq I_\rho(u)$, for all $u\in H_{\tilde{\Omega}}^{1/2}$. For each $u^{\pm}\in \mathcal{M}_\rho$, there exist $t_\rho, s_\rho>0$ such that 
$$
t_\rho u^+ + s_\rho u^{-} \in \mathcal{M}.
$$
Thus
$$
c\leq I(t_\rho u^+ + s_\rho u^{-}) \leq I_\rho (t_\rho u^+ + s_\rho u^{-}) \leq I_\rho(u) \leq c_\rho, \quad \forall \rho \gg |b-a|,
$$
and 
\begin{equation}\label{N32}
c\leq \liminf_{\rho\to +\infty}c_\rho.
\end{equation}
On the other hand, given $w\in \mathcal{M}$, there exist $t_\rho, s_\rho>0$ such that 
$$
t_\rho w^+ + s_\rho w^-\in \mathcal{M}_\rho.
$$ 
A direct computation gives that $(t_\rho)$ and $(s_\rho)$ are bounded, because $I_\rho(t_\rho w^+ + s_\rho w^-) \geq c_\rho >0$, and we have the limit below  
$$
\lim_{\rho \to +\infty}I_\rho(t_\rho w^+ + s_\rho w^-) <0,
$$
if $t_\rho \to +\infty$ or $s_\rho \to +\infty$ as $\rho \to +\infty$. Hence, by Lebesgue's theorem,
$$
\int_{\R \setminus (a,b)} (1-\varphi_\rho (x))Q(x)F(t_\rho w^+ + s_\rho w^-)dx \to 0\;\;\mbox{as}\;\;\rho\to +\infty
$$ 
from where it follows that
$$
c_\rho \leq I(w) + o_\rho(1).
$$
Consequently 
$$
\limsup_{\varrho\to +\infty}c_\varrho\leq I(w),\;\;\forall w\in \mathcal{M}
$$
leading to 
\begin{equation}\label{N33}
\limsup_{\rho\to +\infty}c_\rho \leq c.
\end{equation}
By (\ref{N32}) and (\ref{N33}), 
$$
\lim_{\rho\to \infty}c_\rho = c.
$$
\end{proof}

\noindent 
{\bf Proof of Theorem \ref{main2}:} In what follows, we set $\rho_n \to +\infty$ and $u_n=u_{\rho_n}$. Since 
$$
c + \|u_n\|_{H_{\tilde{\Omega}}^{1/2}}\geq I_{\rho_n}(u_n) - \frac{1}{\theta}I'_{\rho_n}(u_n)u_n = \left( \frac{1}{2} - \frac{1}{\theta}\right)\|u_n\|_{H_{\tilde{\Omega}}^{1/2}}^{2}, 
$$
we conclude that $(u_n)$ is bounded in $H_{\tilde{\Omega}}^{1/2}$. Then, for some subsequence, there is $u\in H_{\tilde{\Omega}}^{1/2}$ such that 
$$
u_n  \rightharpoonup u\;\;\mbox{in}\;\;H_{\tilde{\Omega}}^{1/2} \quad \mbox{and}\quad I'(u)=0. 
$$ 
Now we are going to show that $u^{\pm} \neq 0$. Indeed we need to consider three cases:
\begin{itemize}
\item[(i)] $u^+ = u^-=0$.
\item[(ii)] $u^+\neq 0$ and $u^- = 0$.
\item[(iii)] $u^+ = 0$ and $u^- \neq 0$
\end{itemize}
We will prove that the above cases do not hold, therefore $u^{\pm} \neq 0$. We only prove $(i)$, since the other cases follow with the same type of arguments.

By Proposition \ref{GSresult03}, there exist $\eta, \kappa>0$ and sequences $(y_{n}^{1})$ and $(y_{n}^{2})$ in $\R\setminus (a,b)$ with $|y_n^1|, |y_{n}^{2}|\to \infty$ such that   
\begin{equation}\label{N34}
\liminf_{\varrho\to +\infty} \int_{\Lambda(y_{n}^{1}, \kappa)} |u_{n}^{+}|^2dx\geq \eta\quad \mbox{and}\quad \liminf_{n \to +\infty} \int_{\Lambda(y_{n}^{2}, \kappa)} |u_{n}^{-}|^2dx\geq \eta,
\end{equation}
where $\Lambda(y,\kappa) = (-\kappa,\kappa) \cap (\R\setminus (a,b))$. Let $w_n (x) = u_{\rho}(x+y_{n}^{1})$ and $z_{n}(x) = u_n(x+y_{\rho}^{2})$. Arguing as in the proof of Theorem \ref{main1}, we can assume that there exist $w,z\in H^{1/2}(\R)\setminus \{0\}$ such that $w_n  \rightharpoonup w$ and $z_n  \rightharpoonup z$ in $H^{1/2}(-T,T)$ for all $T>0$,  with $w^+\neq 0$ and $z^-\neq 0$. Now, let $\psi \in H_{\tilde{\Omega}}^{1/2}$ be a test function with bounded support. Since $I'_{\rho_n}(u_{n}) = 0$, then   
\begin{equation}\label{N35}
I'_{\rho_n}(u_n)\psi(.-y_{n}^{1}) = 0.
\end{equation}
By doing the change of variable $\tilde{x} = x - y_{n}^1$ and $\tilde{y} = y - y_{n}^{1}$, from (\ref{N35}) we get 
\begin{equation}\label{N36}
\begin{aligned}
&\frac{1}{2\pi}\iint_{\R^{2}\setminus (a-y_{n}^{1}, b-y_{n}^{1})^2} \frac{[w_{n}(x) - w_{n}(y)][\psi(x) - \psi(y)]}{|x-y|^{N+2s}}dy dx + \int_{\R \setminus (a-y_{n}^{1}, b-y_n^1)} w_{n}(x)\psi(x)dx \\
&\hspace{6cm}= \int_{\R\setminus (a-y_{n}^{1}, b-y_n^1)} \varphi_{\rho}(x+y_{n}^{1})Q(x + y_{n}^{1})f(w_{n}^{1})\psi(x)dx.
\end{aligned}
\end{equation} 
By the weak convergence of $(w_n)$ to $w$ in $H^{1/2}(-T,T)$, we have 
\begin{equation}\label{N37}
\begin{aligned}
\frac{1}{2\pi}\iint_{\R^{2}\setminus (a-y_n^1, b-y_n^1)^2} &\frac{[w_{n}(x) - w_{n}(y)][\psi(x) - \psi(y)]}{|x-y|^{N+2s}}dy dx + \int_{\R \setminus (a-y_{n}^{1}, b - y_n^1)} w_{n}(x)\psi(x)dx \\
&\to \frac{1}{2\pi} \iint_{\R^{2}} \frac{[w(x) - w(y)][\psi(x) - \psi(y)]}{|x-y|^{N+2s}}dydx + \int_{\R} w(x)\psi(x)dx.
\end{aligned}
\end{equation}
Moreover, again by Lebesgue's theorem, 
\begin{equation}\label{N38}
\int_{\R\setminus (a-y_{n}^{1}, b-y_n^1)} \varphi_n(x+y_{n}^{1})Q(x+y_{n}^{1})f(w_n)\psi(x)dx \to \int_{\R^N}\tilde{Q}f(w)\psi(x)dx
\end{equation}
Combining (\ref{N37}) and (\ref{N38}), we obtain
$$I'_\infty(w)\psi = 0.$$
In the same way we can show that $I'_\infty(z)\psi = 0$. 

Now by (\ref{N03}) and (\ref{N04}) and the above equality, we have
$$
I'_\infty(w^{+})w^{+} = I'_\infty(w)w^+ + \frac{1}{2\pi}\iint_{\R^{2N}}\frac{w^{+}(x)w^{-}(y) + w^{+}(y)w^{-}(x)}{|x-y|^{N+2s}}dy dx\leq 0
$$
and 
$$
I'_\infty(z^{-})z^{-} = I'_\infty(z)z^- + \frac{1}{2\pi}\iint_{\R^{2N}}\frac{z^{+}(x)z^{-}(y) + z^{+}(y)z^{-}(x)}{|x-y|^{N+2s}}dy dx\leq 0.
$$
So there are $t_{w}, t_z \in (0,1]$ such that $t_{w}w^{+}, t_{z} z^{-} \in \mathcal{N}_\infty$. Thus, by Fatou's lemma, Lemma \ref{Nlm02} and Lemma \ref{Nlm03} we have
$$
\begin{aligned}
2c_{\infty} &\leq I_{\infty}(t_ww^+) + I_{\infty}(t_zz^-)\\
& = \left[I_\infty(t_ww^+) - \frac{1}{\theta}I'_\infty(t_ww^+)t_ww^+\right] + \left[I(t_zz^-) - \frac{1}{\theta}I'_\infty(t_zz^-)t_zz^-\right]\\
& = \liminf_{n \to +\infty} \left[I_{\rho_n}(u_n) - \frac{1}{2}I'_{\rho_n}(u_n)u_n\right] = \lim_{n \to +\infty} I_{\rho_n}(u_n)= \lim_{n \to +\infty} c_{\rho_n}=c < c_1+c_\infty,
\end{aligned}
$$
which is absurd.

\section{Appendix}

In this section, our main goal is to study some $L^\infty$ estimate and decay at infinite of the ground state solution $u$ of ($P$) that was obtained in Section 3. 
We start our analysis with the following lemma.
\begin{lemma}\label{Rlm01}
The ground state solution  $u_1$ belongs to $L^\infty(\R\setminus (a,b))$. Moreover, $\|u_1\|_{L^\infty(\R\setminus (a,b))} \leq M$ , for some $M$ that is independent of $R >|a|+|b|+1$.
\end{lemma}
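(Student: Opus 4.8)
The plan is to first promote the exponential integrability coming from the smallness of the norm into an $L^{r}$--bound, with $r>1$, for the right--hand side $Q(x)f(u_1)$, and then to run a Moser--type iteration adapted to the critical Trudinger--Moser regime. First I would invoke Proposition \ref{GSresult05}, which gives $\|u_1\|_{H_{\tilde{\Omega}}^{1/2}}<\frac{1}{\sqrt2\,\xi}$, so that the extension $v_1=Eu_1$ satisfies $\|v_1\|_{1/2}\le\xi\|u_1\|_{H_{\tilde{\Omega}}^{1/2}}<\frac{1}{\sqrt2}$, and hence $r\|v_1\|_{1/2}^{2}<1$ for some $r>1$. By \eqref{I03} and Lemma \ref{lm03} this yields $e^{\pi|v_1|^2}-1\in L^{r}(\R)$, and therefore $e^{\pi|u_1|^2}-1\in L^{r}(\tilde{\Omega})$. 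Combining this with $(f_1)$, $(f_3)$ in the form \eqref{01}, and with the fact that $u_1\in L^{q}(\tilde{\Omega})$ for every $q\in[2,\infty)$ (Lemma \ref{embe}), I obtain $g:=Q(x)f(u_1)\in L^{r}(\tilde{\Omega})$ for this fixed $r>1$.

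Next I would test the weak formulation
$$
\frac{1}{2\pi}\iint_{\R^{2}\setminus(a,b)^2}\frac{(u_1(x)-u_1(y))(\varphi(x)-\varphi(y))}{|x-y|^{2}}\,dy\,dx+\int_{\tilde{\Omega}}u_1\varphi\,dx=\int_{\tilde{\Omega}}g\,\varphi\,dx
$$
with $\varphi=u_1\,u_{1,L}^{2(\beta-1)}$, where $u_{1,L}=\min\{|u_1|,L\}$, $L>0$, $\beta\ge1$, noting $\varphi\in H_{\tilde{\Omega}}^{1/2}$ because $u_1\in L^{q}(\tilde{\Omega})$ for all finite $q$. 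The elementary (fractional chain--rule) pointwise inequality for the Gagliardo kernel, which stays valid after restricting the integration to $\R^{2}\setminus(a,b)^2$, shows that the left--hand side controls $\frac{c}{\beta}\big\|\,|u_1|^{\beta}\,\big\|_{H_{\tilde{\Omega}}^{1/2}}^{2}$ after letting $L\to\infty$, while H\"older's inequality with the conjugate pair $(r,r')$ bounds the right--hand side by $\|g\|_{L^{r}(\tilde{\Omega})}\,\|u_1\|_{L^{(2\beta-1)r'}(\tilde{\Omega})}^{2\beta-1}$. Using the critical embedding $H_{\tilde{\Omega}}^{1/2}\hookrightarrow L^{q}(\tilde{\Omega})$, whose optimal constant grows like $C\sqrt q$ as $q\to\infty$ (a quantitative form of \eqref{I03}, transported through $E$), this gives, for every finite $q$,
$$
\|u_1\|_{L^{q\beta}(\tilde{\Omega})}^{2\beta}\le C\,q\,\beta\,\|g\|_{L^{r}(\tilde{\Omega})}\,\|u_1\|_{L^{(2\beta-1)r'}(\tilde{\Omega})}^{2\beta-1}.
$$
Taking $q=\beta$ and, at the $k$--th step, choosing $\beta_k$ so that $(2\beta_k-1)r'$ equals the exponent produced at the previous step, the resulting Lebesgue exponents $p_k$ grow doubly exponentially once they exceed $(2r')^{2}$, whereas the multiplicative errors are $\big(C\beta_k^{2}\|g\|_{L^{r}(\tilde{\Omega})}\big)^{1/(2\beta_k)}$, whose logarithms form a convergent series. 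Hence $\limsup_k\|u_1\|_{L^{p_k}(\tilde{\Omega})}<\infty$, i.e. $u_1\in L^{\infty}(\tilde{\Omega})$, with $M$ depending only on $r$, on $\|g\|_{L^{r}(\tilde{\Omega})}$ and on the critical Sobolev constants. (An alternative is to note $(-\Delta)^{1/2}u_1=g-u_1\in L^{r}(\tilde{\Omega})$ with $r>1=N/(2s)$ and to invoke, after extending the equation across $(a,b)$, the one--dimensional embedding $W^{1,r}(\R)\hookrightarrow L^{\infty}(\R)$; the nonlocal Neumann region then needs extra care.) Finally, the condition $\mathcal{N}_{1/2}u_1=0$ on $(a,b)$ reads $u_1(x)\big(\tfrac{1}{x-a}+\tfrac{1}{b-x}\big)=\int_{\tilde{\Omega}}\frac{u_1(y)}{|x-y|^{2}}\,dy$ for $x\in(a,b)$, exhibiting $u_1(x)$ as a convex combination of the values of $u_1$ on $\tilde{\Omega}$, so in fact $\|u_1\|_{L^{\infty}(\R)}=\|u_1\|_{L^{\infty}(\tilde{\Omega})}$.

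For the uniformity in $R$ I would simply track the constants: the Trudinger--Moser constant $\pi$ in \eqref{I03}, the extension norm $\xi$, the bound $\|u_1\|_{H_{\tilde{\Omega}}^{1/2}}<\frac{1}{\sqrt2\,\xi}$ from Proposition \ref{GSresult05}, the critical Sobolev constants, and $\|Q\|_{L^{\infty}(\tilde{\Omega})}$ do not involve $R$, hence neither does $M$. The step I expect to be the main obstacle is the Moser iteration in the critical dimension $N=2s$: there is no Sobolev gain in the exponent for free, so one must exploit the precise $\sqrt q$--growth of the critical embedding constant (equivalently, the $L^{r}$--integrability of $e^{\pi|v_1|^2}-1$) together with a step--dependent choice of the auxiliary exponent $q$, and then check that the logarithmic corrections are summable; in addition, the truncation and chain--rule estimates for the nonlocal form restricted to $\R^{2}\setminus(a,b)^{2}$, inherited from the nonlocal Neumann condition, require some further bookkeeping.
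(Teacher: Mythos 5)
Your proposal is correct in substance and rests on the same pillars as the paper's proof: the truncated test function $v=u_1u_{1,L}^{2(\beta-1)}$, the pointwise inequality for the Gagliardo kernel (which indeed survives restriction to $\R^2\setminus(a,b)^2$), the exponential integrability of $e^{\pi|u_1|^2}-1$ coming from $\|Eu_1\|_{1/2}\le\xi\|u_1\|_{H_{\tilde{\Omega}}^{1/2}}<\tfrac{1}{\sqrt2}$ (Proposition \ref{GSresult05}, \eqref{I03}, Lemma \ref{lm03}), a Moser-type iteration, and $R$-independence tracked through $\xi$ and the norm bound. Where you genuinely diverge is the bookkeeping of the iteration. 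The paper splits $f(u_1)v\le \epsilon u_1v+C_\epsilon u_1v\left(e^{\pi\tau u_1^2}-1\right)$, absorbs the $\epsilon$-term on the left, applies H\"older to $u_1v\cdot h$ with $h=e^{\pi\tau u_1^2}-1\in L^{q}$, and then uses a \emph{single fixed} embedding $H_{\tilde{\Omega}}^{1/2}\hookrightarrow L^{\gamma}$ with $\gamma>2q'$; this yields $\|u_1\|_{L^{\gamma\beta}}\le C^{1/\beta}\beta^{1/\beta}\|u_1\|_{L^{2\beta q'}}$ and a fixed geometric gain $\chi=\gamma/(2q')>1$ per step, so the $\sqrt{q}$-growth of the critical embedding constant is never needed. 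Your scheme, which puts all of $g=Qf(u_1)$ into a fixed $L^{r}$ and exploits the $\sqrt{q}$-growth with the step-dependent choice $q=\beta$, also closes (doubly exponential exponents, summable logarithmic corrections), but your claim that this quantitative growth is indispensable in the critical dimension is an overstatement: the fixed-$\gamma$ device above resolves the "no Sobolev gain'' issue and would work equally well with your H\"older split, giving geometric rather than doubly exponential growth. Two points you should make explicit if you run your version: (i) since $\tilde{\Omega}$ is unbounded, the linear part $\epsilon|u_1|$ in \eqref{01} lies in $L^{r}(\tilde{\Omega})$ only for $r\ge 2$, so either take $r=2$ (admissible because $2\|Eu_1\|_{1/2}^{2}<1$) or absorb that term on the left-hand side as the paper does; (ii) with $q=\beta$ the recursion $p_k\approx p_{k-1}^{2}/(2r')^{2}$ increases only once $p_{k-1}>(2r')^{2}$, so the iteration must be started from a sufficiently large finite exponent, which is legitimate since $u_1\in L^{p}(\tilde{\Omega})$ for every $p\in[2,\infty)$ with a bound independent of $R$ by Lemma \ref{embe} and Proposition \ref{GSresult05} (this is also how the uniform bound on the starting norm $\|u_1\|_{L^{\gamma}}$ is obtained in the paper). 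Your closing remark on the nonlocal Neumann condition giving $\|u_1\|_{L^{\infty}(\R)}=\|u_1\|_{L^{\infty}(\tilde{\Omega})}$ is correct but not needed for the statement.
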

\begin{proof} In what follows $u$ denotes $u_1$. In this proof we adapt for our case some arguments found in \cite[Lemma 5.4]{AlvesAmbrosio}.
For all $t\in \R$ and $L>0$, we set 
\begin{equation}\label{A01}
t_L = \mbox{sgn}(t) \min\{|t|, L\}.
\end{equation}
By \cite[Lemma 3.1]{AISMMS}, for all $a,b\in \R$, $\beta > 1$ and $L>0$ we have 
\begin{equation}\label{A02}
(a-b)(a|a|_{L}^{2(\beta -1)} - b|b|_{L}^{2(\beta -1)}) \geq \frac{2\beta -1}{\beta^2} (a|a|_{L}^{\beta - 1} - b|b|_{L}^{\beta -1})^2.
\end{equation}
Since the mapping 
$t\to t|t|_{L}^{2(\beta-1)} \;\,\mbox{is Lipschitz in $\R$},$  
then $uu_{L}^{2(\beta-1)} \in H_{\tilde{\Omega}}^{1/2}$. Taking $v = uu_{L}^{2(\beta-1)}$ as a test function in $(P)$, we have 
\begin{equation}\label{A03}
\begin{aligned}
\frac{1}{2\pi}\iint_{\R^2\setminus (a,b)^2} \frac{[u(x) - u(y)][v(x) - v(y)]}{|x-y|^{2}}dy dx + \int_{\R\setminus (a,b)} u(x)v(x)dx = \int_{\R\setminus (a,b)} Q(x)f(u(x))v(x)dx.
\end{aligned}
\end{equation}
By ($f_1$) and ($f_3$), given $\tau >1$ and $\epsilon >0$, there exists $C_\epsilon>0$ such that 
$$
|f(s)| \leq \epsilon |s| + C_\epsilon |s|\left( e^{\pi \tau s^2} - 1\right)\;\;\forall s\in \R.
$$ 
So
\begin{equation}\label{A04}
\begin{aligned}
\int_{\R\setminus (a,b)} Q(x)f(u(x))v(x)dx &\leq \epsilon \int_{\R\setminus (a,b)} Q(x) u(x)v(x) dx + C_\epsilon \int_{\R\setminus (a,b)} Q(x)u(x)\left(e^{\pi \tau u^2} - 1\right)v(x)dx\\
&\leq \epsilon \|Q\|_{\infty}\int_{\R\setminus (a,b)}u(x)v(x)dx + C_\epsilon \|Q\|_{\infty}\int_{\R\setminus (a,b)} u(x)v(x)\left( e^{\pi \tau u^2} - 1\right)dx. 
\end{aligned}
\end{equation}
Combining (\ref{A03}) with (\ref{A04}) we get
$$
\begin{aligned}
\frac{1}{2\pi}\iint_{\R^2\setminus (a,b)^2} \frac{[u(x) - u(y)][v(x) - v(y)]}{|x-y|^{2}}dy dx &+ (1-\epsilon \|Q\|_\infty)\int_{\R\setminus (a,b)}u(x)v(x)dx\\
& \leq C_\epsilon \|Q\|_{\infty}\int_{\R\setminus (a,b)} u(x)v(x)\left( e^{\pi \tau u^2} -1\right)dx.
\end{aligned}
$$
Taking $\epsilon>0$ small enough such that $1-\epsilon \|Q\|_\infty>0$ we get
\begin{equation}\label{A05}
\begin{aligned}
(1-\epsilon\|Q\|_\infty)\left(\frac{1}{2\pi}\iint_{\R^2\setminus (a,b)^2} \frac{[u(x) - u(y)][v(x) - v(y)]}{|x-y|^{2}}dy dx\right. & + \left.\int_{\R\setminus (a,b)}u(x)v(x)dx \right)\\
& \leq C_\epsilon \|Q\|_{\infty}\int_{\R\setminus (a,b)} u(x)v(x)\left( e^{\pi \tau u^2} - 1\right)dx.
\end{aligned}
\end{equation} 
Now we can see that $h(x) = \left( e^{\pi \tau u^2} - 1\right) \in L^q(\R)$ for some $q>1$ close to $1$, with $\tau, q>1$ are such that $\tau q \|u\|_{H_{\tilde{\Omega}}^{1/2}}^{2} < 1$ and 
$$
\|h\|_{L^q(\R)} \leq C.
$$
We would like point out that $C$ is independent of $R>|a|+|b|+1$, because the constant $\xi$ in Proposition \ref{GSresult05} is independent of $R>|a|+|b|+1$.

Thereby, by Lemma \ref{lm03}, (\ref{A05}) and H\"older inequality  
$$
\begin{aligned}
&\|uu_{L}^{\beta-1}\|_{L^{\gamma}(\R\setminus (a,b))}^{2} \leq S \|uu_{L}^{\beta-1}\|_{H_{\tilde{\Omega}}^{1/2}}^{2}\\
& = S\left(\frac{1}{2\pi}\iint_{\R^{2}\setminus (a,b)^2} \frac{|(uu_{L}^{\beta-1})(x) - (uu_{L}^{\beta-1})(y)|^2}{|x-y|^{2}}dy dx + \int_{\R \setminus (a,b)} (uu_{L}^{\beta-1})^2(x)dx\right)\\
&\leq S\left( \frac{\beta^2}{2\beta-1} \frac{1}{2\pi}\iint_{\R^{2}\setminus (a,b)^2} \frac{(u(x) - u(y))(u(x)u_{L}^{2(\beta-1)}(x) - u(y)u_{L}^{2(\beta-1)}(y))}{|x-y|^{N+2s}} dy dx + \int_{\R\setminus (a,b)} u^2(x)u_{L}^{2(\beta-1)}(x)dx\right)\\
&\leq \frac{S\beta^2}{2\beta-1} \left(\frac{1}{2\pi}\iint_{\R^{2}\setminus (a,b)^2} \frac{(u(x) - u(y))(u(x)u_{L}^{2(\beta-1)}(x) - u(y)u_{L}^{2(\beta-1)}(y))}{|x-y|^{N+2s}} dy dx + \int_{\R\setminus (a,b)} u^2(x)u_{L}^{2(\beta-1)}(x)dx\right)\\
&\leq \frac{S\beta^2}{2\beta-1} \frac{C_\epsilon \|Q\|_\infty}{1-\epsilon \|Q\|_\infty} \int_{\R\setminus (a,b)} u(x)v(x)\left( e^{\pi \tau u^2} - 1\right)dx\\
&\leq \tilde{C} \beta^2 \int_{\R\setminus (a,b)} u(x)v(x)\left( e^{\pi \tau u^2} - 1\right)dx \leq \tilde{C}\beta^2 \left( \int_{\R\setminus (a,b)}(u(x)v(x))^{q'}dx\right)^{1/q'}\left(\int_{\R\setminus (a,b)} h^q(x)dx \right)^{1/q}\\
&\leq \tilde{C}_1 \beta^2\left( \int_{\R\setminus (a,b)} (u(x)v(x))^{q'}dx\right)^{1/q'}
\end{aligned}
$$
Since $u_L\leq u$, the last inequality leads to  
\begin{equation}\label{A06}
\begin{aligned}
\|u_{L}^{\gamma \beta}\|_{L^{\gamma \beta}(\R\setminus (a,b))}^{2\beta} \leq \tilde{C}_1\beta^2 \left( \int_{\R\setminus (a,b)} u^{2\beta q'}(x)dx\right)^{1/q'} = \tilde{C}_1\beta^2 \|u\|_{L^{2\beta q'}(\R\setminus (a,b))}^{2\beta}
\end{aligned}
\end{equation}
By passing to the limit in (\ref{A06}) as $L \rightarrow +\infty$, the Fatou's Lemma gives
\begin{equation}\label{A07}
\|u\|_{L^{\gamma \beta}(\R\setminus (a,b))} \leq C^{\frac{1}{\beta}} \beta^{\frac{1}{\beta}} \|u\|_{L^{2\beta q'}(\R\setminus (a,b))}.
\end{equation}
whenever $u^{\gamma \beta} \in L^1(\R\setminus (a,b))$. 
Now we claim that 
\begin{equation}\label{A08}
\|u\|_{L^{\chi^{k+1}2q'}(\R\setminus (a,b))} \leq C^{\frac{1}{\chi^k} +\cdots + \frac{1}{\chi}}\chi^{\frac{k}{\chi^k}+\cdots + \frac{1}{\chi}}\|u\|_{L^{\gamma}(\R\setminus (a,b))},
\end{equation}
where 
$$
\chi = \frac{\gamma}{2q'}
$$
with $\gamma > 2q'$ fixed. In fact, by (\ref{A07}), if $\beta = \chi$, then we have 
$$
u^\beta \in L^{2q'}(\R\setminus (a,b))
$$
and 
\begin{equation}\label{A09}
\|u\|_{L^{\chi^22q'}(\R\setminus (a,b))} \leq C^{\frac{1}{\chi}} \chi^{\frac{1}{\chi}} \|u\|_{L^{\gamma}(\R\setminus (a,b))}.
\end{equation}
Now, if $\beta = \chi^2$, by (\ref{A09}) implies that 
$$
u^{\beta} \in L^{2q'}(\R\setminus (a,b))
$$
and by (\ref{A07}) and (\ref{A09}) 
\begin{equation}\label{A10}
\|u\|_{L^{\chi^3 2q'}(\R\setminus (a,b))} \leq C^{\frac{1}{\chi^2}} \chi^{\frac{2}{\chi^2}} \|u\|_{L^{\chi^2 2q'}(\R\setminus (a,b))} \leq C^{\frac{1}{\chi^2} + \frac{1}{\chi}} \chi^{\frac{2}{\chi^2} + \frac{1}{\chi}} \|u\|_{L^{\gamma}(\R\setminus (a,b))}.
\end{equation}
It follows from (\ref{A10})
\begin{equation}\label{A11}
u^{\chi^3} \in L^{2q'}(\R\setminus (a,b)).
\end{equation}
Now by induction, we suppose that (\ref{A08}) holds for some $k>1$. Then
$$
u^\beta \in L^{2q'}(\R\setminus (a,b)),
$$
with $\beta = \chi^{k+1}$. Moreover by (\ref{A07}) and by hypotheses we get 
\begin{equation}\label{A12}
\|u\|_{L^{\chi^{k+2}2q'}(\R\setminus (a,b))} \leq C^{\frac{1}{\chi^{k+1}}}\chi^{\frac{k+1}{\chi^{k+1}}}\|u\|_{L^{\chi^{k+1}2q'}(\R\setminus (a,b))}\leq C^{\sum_{i=1}^{k}\frac{1}{\chi^i}} \chi^{\sum_{i=1}^{k} \frac{i}{\chi^i}}\|u\|_{L^{\gamma}(\R\setminus (a,b))},
\end{equation}
which proves (\ref{A08}). Taking the limit in (\ref{A08}) as $k\to \infty$ and recalling that $u\in H_{\tilde{\Omega}}^{1/2}$, we get that $u\in L^\infty(\R\setminus (a,b))$ with 
$$
\|u\|_{L^\infty(\R\setminus (a,b))} \leq C^{\frac{1}{\chi-1}} \chi^{\frac{\chi^2}{(\chi-1)^2}}\|u\|_{L^\gamma(\R\setminus (a,b))},
$$ 
where 
$$
\sum_{i=1}^{\infty} \frac{1}{\chi^i} = \frac{1}{\chi -1}\;\;\mbox{and}\;\;\sum_{i=1}^{\infty} \frac{i}{\chi^i} = \frac{\chi^2}{(\chi-1)^2}.
$$ 
By Sobolev embedding and Proposition \ref{GSresult05}, there is $M_1$ independent of $R>|a|+|b|+1$ such that 
$$
\|u\|_{L^\gamma(\R\setminus (a,b))} \leq M_1.
$$ 
Therefore,  there is $M$ independent of $R>|a|+|b|+1$ such that 
$$
\|u\|_{L^\infty(\R\setminus (a,b))} \leq M. 
$$ 
\end{proof}

Our next goal is showing that $u(x) \to 0$ as $|x| \to +\infty$. However, in order to prove this, we will firstly study some properties of the solution of the following linear problem. 
\begin{equation}\label{A13}
\left\{\begin{array}{l} 
\frac{1}{2}(-\Delta)^{1/2}v + v = g(x)\;\;\mbox{in}\;\;\R,\\
v\in H^{1/2}(\R),
\end{array}
\right.
\end{equation}
where 
$$
g(x) = Q(x)f(\tilde{u}(x)).
$$ 
and 
$$
\tilde{u}(x) = \begin{cases}
u(x),&x\in \R \setminus (a,b)\\
0,&x\in (a,b).
\end{cases}
$$
Note that $g\in L^2(\R^N)$. Consequently, by Riesz's Theorem,  problem (\ref{A13}) has a unique weak solution $v\in H^{1/2}(\R)$, which is given by   
\begin{equation}\label{A14}
v(x) = (\mathcal{K} * g)(x) = \int_{\R} \mathcal{K}(x-\xi)g(\xi)d\xi,
\end{equation} 
where $\mathcal{K}$ is the Bessel kernel
\begin{equation}\label{A15}
\mathcal{K}(x) = \mathcal{F}^{-1}\left( \frac{1}{1+\frac{1}{2}|\xi|}\right)(x)=2K_*(2x),
\end{equation}
where
$$
K_*(x)=\mathcal{F}^{-1}\left( \frac{1}{1+|\xi|}\right)(x).
$$
The function $\mathcal{K}$ verifies the following properties: 
\begin{itemize}
\item[$(K_1)$] $\mathcal{K} $ is positive, radially symmetric and smooth in $\mathbb{R}\setminus \{0\}$, \\
\item[$(K_2)$] There is $C>0$ such that  
$$
\mathcal{K}(x) \leq \frac{C}{|x|^{2}}, \quad \forall x \in \mathbb{R}\setminus \{0\} 
$$
\item[$(K_3)$] There is a constant $C$ such that 
$$
\mathcal{K}'(x) \leq \frac{C}{|x|^{3}}\;\;\mbox{if}\;\;|x|\geq 1.
$$ 
\item[$(K_4)$] $\mathcal{K} \in L^{q}(\mathbb{R}), \quad \forall q \in [1,\infty)$. 
\end{itemize}
The properties above mentioned were proved in \cite{PFAQJT} for function $K_*$, and so, they must hold for $\mathcal{K}$. Since $u(x)\geq 0$ for all $x\in \R\setminus \Omega$, $u \not \equiv 0$ and  $\mathcal{K}$ is positive, then $v(x)>0$ for all $x \in \mathbb{R}$. 

By using the above information, we are able to prove the following result
\begin{lemma}\label{continuity}
 The function $v$ is continuous, that is, $v\in C(\R)$.
 \end{lemma}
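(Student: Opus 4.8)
The plan is to realize $v$ as the convolution of two $L^2$ functions and then invoke the continuity of translations in $L^2(\R)$. First I would record the integrability of $g$. By Lemma \ref{Rlm01} we have $\tilde{u}\in L^\infty(\R)$, and since $u\in H_{\tilde{\Omega}}^{1/2}$ it follows that $\tilde{u}\in L^2(\R)$. Because $f\in C^1(\R,\R)$ is odd we have $f(0)=0$, so on the bounded interval $[-\|\tilde{u}\|_\infty,\|\tilde{u}\|_\infty]$ one gets $|f(s)|\le C|s|$; combining this with the boundedness of $Q$ yields
$$
|g(x)|=|Q(x)f(\tilde{u}(x))|\le C|\tilde{u}(x)|\quad\text{for a.e. }x\in\R,
$$
hence $g\in L^2(\R)$ (and also $g\in L^\infty(\R)$).

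Next, by property $(K_4)$ we have $\mathcal{K}\in L^2(\R)$. I would then estimate, for $h\in\R$,
$$
|v(x+h)-v(x)|=\left|\int_{\R}\big(\mathcal{K}(x+h-\xi)-\mathcal{K}(x-\xi)\big)g(\xi)\,d\xi\right|\le \|\tau_h\mathcal{K}-\mathcal{K}\|_{L^2(\R)}\,\|g\|_{L^2(\R)},
$$
where $\tau_h\mathcal{K}(\cdot)=\mathcal{K}(\cdot+h)$, using the Cauchy–Schwarz inequality. Since translation acts continuously on $L^2(\R)$, the right-hand side tends to $0$ as $h\to 0$, uniformly in $x$. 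Therefore $v$ is (uniformly) continuous on $\R$, that is, $v\in C(\R)$; in particular $\|v\|_{L^\infty(\R)}\le\|\mathcal{K}\|_{L^2(\R)}\|g\|_{L^2(\R)}$.

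The argument is essentially routine. The only points requiring a little care are verifying that $g$ genuinely lies in $L^2(\R)$ — which is where $(f_1)$, $(f_3)$ and the $L^\infty$-bound from Lemma \ref{Rlm01} enter — and recalling the standard fact that translation is continuous on $L^p$ for $p<\infty$. No genuine obstacle is expected; an alternative route, splitting the convolution integral near the singularity of $\mathcal{K}$ and using $(K_1)$–$(K_2)$ together with dominated convergence, would also work but is less clean.
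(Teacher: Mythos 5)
Your proof is correct, and it takes a genuinely different (and cleaner) route than the paper. The paper argues pointwise at a fixed $x_0$: it splits $\int_{\R}|\mathcal{K}(x-\xi)-\mathcal{K}(x_0-\xi)||g(\xi)|\,d\xi$ into three regions — far away, where it uses the mean value theorem together with the derivative decay $(K_3)$ and H\"older; a small neighbourhood of $x_0$ containing the singularity of the kernel, handled via $(K_4)$ and the absolute continuity of $\int|g|^{q'}$; and the intermediate region, where it uses the continuity of $\mathcal{K}$ away from the origin — so its proof leans on the pointwise properties $(K_2)$--$(K_4)$ and smoothness of $\mathcal{K}$ off the origin. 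You instead observe that $v=\mathcal{K}\ast g$ with $\mathcal{K}\in L^2(\R)$ (from $(K_4)$) and $g\in L^2(\R)$ (which the paper also asserts, and which you justify correctly from $Q$ bounded, $f\in C^1$ odd so $|f(s)|\le C|s|$ on the range of $\tilde{u}$, and $\tilde{u}\in L^2\cap L^\infty$ via Lemma \ref{Rlm01}), and then use Cauchy--Schwarz plus the continuity of translations in $L^2(\R)$. This buys you more than the statement: uniform continuity of $v$ and the bound $\|v\|_{L^\infty(\R)}\le\|\mathcal{K}\|_{L^2(\R)}\|g\|_{L^2(\R)}$, with none of the kernel's fine structure beyond $(K_4)$; the paper's hands-on splitting, on the other hand, is the kind of argument that also adapts to the decay estimates it needs later (Lemma \ref{ZERO}), which is presumably why the authors phrase it that way. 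The alternative you sketch at the end (splitting near the singularity and using dominated convergence) is essentially the paper's proof.
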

\begin{proof}
Let $\delta >0$, $x_0\in \R$ and $T > |x_0|+2 \delta$. For any $x\in (x_0-\delta, x_0+\delta)$, we have 
$$
\begin{aligned}
|v(x) - v(x_0)| &= \left|\int_{\R} \mathcal{K}(x-\xi)g(\xi)d\xi - \int_{\R} \mathcal{K}(x_0 - \xi)g(\xi)d\xi\right|\\
&\leq \int_{\R} |\mathcal{K}(x-\xi) - \mathcal{K}(x_0-\xi)||g(\xi)|d\xi \\
&= \int_{-T}^{T} |\mathcal{K}(x-\xi) - \mathcal{K}(x_0-\xi)||g(\xi)|d\xi + \int_{(-T,T)^c} |\mathcal{K}(x-\xi) - \mathcal{K}(x_0-\xi)||g(\xi)|d\xi
\end{aligned}
$$
Note that, by H\"older inequality,  
$$
\begin{aligned}
\int_{(-T,T)^c} |\mathcal{K}(x-\xi) - \mathcal{K}(x_0-\xi)||g(\xi)|d\xi & \leq \left( \int_{(-T,T)^c} |\mathcal{K}(x-\xi)- \mathcal{K}(x_0-\xi)|^2d\xi\right)^{1/2} \left( \int_{(-T,T)^c} |g(\xi)|^2d\xi\right)^{1/2}.
\end{aligned}
$$ 
Since $\mathcal{K}$ is smooth, there exists $C>0$ 
$$
\begin{aligned}
|\mathcal{K}(x - \xi) - \mathcal{K}(x_0 -\xi)| &\leq |\mathcal{K}' (x_0-\xi + \theta (x-x_0))||x-x_0|\\
&\leq C \frac{1}{|x_0-\xi + \theta(x-x_0)|^{3}} |x-x_0|\\
&\leq C \frac{|x-x_0|}{|\xi|^{3}}.
\end{aligned}
$$ 
Then
$$
\begin{aligned}
\int_{(-T,T)^c} |\mathcal{K}(x-\xi) - \mathcal{K}(x_0 - \xi)|^2 d\xi &\leq \tilde{C} |x-x_0|^2 \int_{(-T,T)^c} \frac{d\xi}{|\xi|^{6}}
= \tilde{C}\delta^2 \frac{1}{T^5}.
\end{aligned}
$$
So
\begin{equation*}
\int_{(-T,T)^c} |\mathcal{K}(x-\xi) - \mathcal{K}(x_0-\xi)||g(\xi)|d\xi \leq  \tilde{C} \frac{\delta}{T^5} \left( \int_{\R} |g(\xi)|^2d\xi\right)^{1/2}.
\end{equation*}
Therefore, given $\epsilon$, we can fix $\delta$ small enough such that
\begin{equation}\label{A16}
\int_{(-T,T)^c} |\mathcal{K}(x-\xi) - \mathcal{K}(x_0-\xi)||g(\xi)|d\xi < \frac{\epsilon}{3}.
\end{equation}
On the other hand, fixing $q \in (1,\infty)$, $q'=\frac{q}{q-1}$ and using $(K_4)$, we obtain by  H\"older inequality  
\begin{equation*}
 \int_{x_0-\delta}^{x_0+\delta} |\mathcal{K}(x-\xi) - \mathcal{K}(x_0-\xi)||g(\xi)|d\xi \leq C\left(\int_{x_0-\delta}^{x_0+\delta} |g(\xi)|^{q'}\,d\xi\right)^{\frac{1}{q'}}.  
\end{equation*}
From this, we can fix $\delta>0$ small enough such that 
\begin{equation}\label{A17}
\int_{x_0-\delta}^{x_0+\delta} |\mathcal{K}(x-\xi) - \mathcal{K}(x_0-\xi)||g(\xi)|d\xi < \frac{\epsilon}{3}.  
\end{equation}
Finally, we can use the continuity of $K$ in $\mathbb{R} \setminus \{0\}$ to prove that 
\begin{equation}\label{A18}
\int_{(-T,T) \setminus (x_0-\delta, x_0+\delta)} |\mathcal{K}(x-\xi) - \mathcal{K}(x_0-\xi)||g(\xi)|d\xi < \frac{\epsilon}{3},  
\end{equation}
when $\delta$ is smaller enough. Now, the lemma follows from (\ref{A16})-(\ref{A18}).

\end{proof}

Our next lemma studies the behavior of $v$ at infinity. In this proof, we use some arguments developed in Alves and Miyagaki \cite[Lemma 2.6]{CAOM} ( see also \cite{Alves16}).   
\begin{lemma} \label{ZERO}
$$	
v(x) \to 0 \quad \mbox{as} \quad |x| \to +\infty.
$$
\end{lemma}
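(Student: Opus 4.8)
The plan is to show $v(x)\to 0$ as $|x|\to+\infty$ by exploiting the convolution representation $v=\mathcal{K}*g$ together with the decay estimate $(K_2)$, namely $\mathcal{K}(x)\le C/|x|^{2}$, and the fact that $g=Q(x)f(\tilde u(x))$ is bounded and lies in $L^q(\R)$ for a suitable range of $q$. The key point is that $g$ is small far from the origin: since $u\in H^{1/2}(\R\setminus(a,b))$ and is bounded (Lemma \ref{Rlm01}), the estimate $|f(s)|\le\epsilon|s|+C_\epsilon|s|(e^{\pi s^2}-1)$ from $(f_1)$, $(f_3)$ together with $u\in L^2(\R\setminus(a,b))$ gives that $g\in L^2(\R)\cap L^\infty(\R)$ and, more importantly, $\int_{|\xi|>T}|g(\xi)|^2\,d\xi\to 0$ as $T\to\infty$.

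\textbf{Splitting the convolution.} Fix $x$ with $|x|$ large and set $T=|x|/2$. I would write
$$
v(x)=\int_{|\xi|\le T}\mathcal{K}(x-\xi)g(\xi)\,d\xi+\int_{|\xi|>T}\mathcal{K}(x-\xi)g(\xi)\,d\xi=:J_1(x)+J_2(x).
$$
For $J_1$: when $|\xi|\le T=|x|/2$ we have $|x-\xi|\ge|x|/2$, so by $(K_2)$, $\mathcal{K}(x-\xi)\le C/|x-\xi|^{2}\le 4C/|x|^{2}$, and hence
$$
|J_1(x)|\le\frac{4C}{|x|^{2}}\int_{\R}|g(\xi)|\,d\xi\le\frac{4C\|g\|_{L^1(\R)}}{|x|^{2}}\to 0,
$$
using that $g\in L^1(\R)$ (which follows from $g\in L^2\cap L^\infty$ and, since $\tilde u$ vanishes where $u$ is not defined, $g$ is supported in $\R\setminus(a,b)$ where $u\in L^2$, while near-origin integrability is immediate from boundedness). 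For $J_2$: by H\"older's inequality with exponents $q'$ and $q$ (choosing $q\in(1,\infty)$ so that $\mathcal{K}\in L^q(\R)$ by $(K_4)$),
$$
|J_2(x)|\le\|\mathcal{K}\|_{L^q(\R)}\Big(\int_{|\xi|>T}|g(\xi)|^{q'}\,d\xi\Big)^{1/q'}.
$$
Since $g\in L^2\cap L^\infty$, it lies in $L^{q'}(\R)$ for every $q'\ge 2$ (take $q$ close to $1$ so that $q'\ge 2$), hence the tail $\int_{|\xi|>T}|g|^{q'}\,d\xi\to 0$ as $T=|x|/2\to\infty$. Combining the two estimates yields $v(x)\to 0$.

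\textbf{Expected main obstacle.} The only delicate point is to make sure the exponential term in $g$ is genuinely integrable, i.e.\ that $e^{\pi\tau u^2}-1\in L^q(\R\setminus(a,b))$ for some $q>1$; this is exactly the content of the Moser--Trudinger estimate (\ref{I03}) combined with Lemma \ref{lm03}, applied to the extension $Eu$, whose norm is controlled by the bound $\|u\|_{H_{\tilde\Omega}^{1/2}}<1/(\sqrt2\,\xi)$ from Proposition \ref{GSresult05}; this was already carried out in the proof of Lemma \ref{Rlm01}, so I would simply invoke it to conclude $g\in L^q(\R)$ for $q$ near $1$, hence $g\in L^1(\R)\cap L^2(\R)$. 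Once this integrability of $g$ is in hand, the decomposition above is routine. Everything else is a direct application of $(K_2)$, $(K_4)$ and dominated convergence on the tails.
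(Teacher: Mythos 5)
Your overall strategy---writing $v=\mathcal{K}*g$, splitting the convolution, and playing the kernel decay $(K_2)$, $(K_4)$ against the smallness of the tails of $g$---is in essence the paper's own argument, only with a different geometry: the paper splits at distance $1/\delta$ \emph{from the point} $x$ (far part controlled by the kernel decay and $\|f(\tilde u)\|_{\infty}$, near part by H\"older with $\mathcal{K}\in L^q$ and the fact that $\|f(\tilde u)\|_{L^{q'}(x-1/\delta,\,x+1/\delta)}\to 0$ as $|x|\to+\infty$), while you split at distance $|x|/2$ \emph{from the origin}. Your $J_2$ estimate is correct and is the exact analogue of the paper's near-window step. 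The problem is in $J_1$: you invoke $g\in L^1(\R)$ and justify it by ``$g\in L^2\cap L^\infty$''. That implication is false: $L^2(\R)\cap L^\infty(\R)$ embeds into $L^p(\R)$ only for $p\geq 2$, and $g=Q\,f(\tilde u)$ is supported on the unbounded set $\R\setminus(a,b)$, so boundedness gives nothing at infinity. At this point of the paper no pointwise decay of $u$ is available---the bound $u(x)\leq C|x|^{-2}$ of Lemma \ref{decay2} is obtained \emph{after}, and by means of, the present lemma---so all one knows is $u\in L^p(\R\setminus(a,b))$ for $p\in[2,\infty]$, and with $(f_3)$ only guaranteeing $q>1$ one cannot conclude $f(\tilde u)\in L^1$. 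As written, the estimate $|J_1(x)|\leq 4C\|g\|_{L^1}|x|^{-2}$ therefore rests on an unproved (and unjustifiable at this stage) hypothesis.

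The gap is easily repaired without changing your decomposition: on $\{|\xi|\leq |x|/2\}$ one has $|x-\xi|\geq |x|/2$, so by the Cauchy--Schwarz inequality and $(K_2)$,
$$
|J_1(x)|\;\leq\;\|g\|_{L^2(\R)}\left(\int_{\{|z|\geq |x|/2\}}\mathcal{K}(z)^2\,dz\right)^{1/2}\;\leq\; C\,\|g\|_{L^2(\R)}\,|x|^{-3/2}\;\longrightarrow\;0,
$$
which uses only $g\in L^2(\R)$, exactly the integrability the paper records before (\ref{A14}); alternatively one can argue as the paper does, bounding the far region by $\|Q\|_\infty\|f(u)\|_\infty\int_{\{|x-y|\geq 1/\delta\}}\mathcal{K}(x-y)\,dy\leq C\delta$, which again avoids any $L^1$ information on $g$. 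With this correction your proof is complete and is a legitimate variant of the paper's argument.
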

\begin{proof}
Given $\delta >0$, we have 
$$
\begin{aligned}
0\leq v(x) &\leq  \int_{\mathbb{R}}\mathcal{K}(x-y)Q(y)|f(\tilde{u})|dy\\
&= \left(\int_{-\infty}^{x-\frac{1}{\delta}} + \int_{x + \frac{1}{\delta}}^{\infty} \right) \mathcal{K}(x-y)Q(y)|f(\tilde{u})|dy + \int_{x-\frac{1}{\delta}}^{x+\frac{1}{\delta}} \mathcal{K}(x-y)Q(y)|f(\tilde{u})|dy.
\end{aligned}
$$
By $(K_2)$, 
\begin{equation} \label{A19}
\begin{aligned}
\left(\int_{-\infty}^{x-\frac{1}{\delta}} +\int_{x+\frac{1}{\delta}}^{+\infty} \right)\mathcal{K}(x-y)Q(y)|f(\tilde{u})|dy &\leq \|Q\|_{\infty}\|f(u)\|_\infty\left(\int_{-\infty}^{x-\frac{1}{\delta}} + \int_{x+\frac{1}{\delta}}^{+\infty} \right)\mathcal{K}(x-y)dy \\
&\leq \|Q\|_{\infty}C\left(\int_{-\infty}^{x-\frac{1}{\delta}} + \int_{x+\frac{1}{\delta}}^{+\infty} \right)\frac{dy}{|x-y|^{3}}=C_1\delta.
\end{aligned}
\end{equation}
On the other hand, fixing $q \in (1,\infty)$, $q'=\frac{q}{q-1}$ and using $(K_4)$, we obtain by  H\"older inequality   
$$
\begin{aligned}
\int_{x-\frac{1}{\delta}}^{x+\frac{1}{\delta}}\mathcal{K}(x-y)Q(y)f(\tilde{u})dy&\leq \int_{x-\frac{1}{\delta}}^{x+\frac{1}{\delta}} \mathcal{K}(x-y)Q(y)f(\tilde{u})dy\\
&\leq K \left( \int_{x-\frac{1}{\delta}}^{x+\frac{1}{\delta}} \mathcal{K}^q(x-y)dx\right)^{1/q} \left( \int_{x-\frac{1}{\delta}}^{x+\frac{1}{\delta}} |f(\tilde{u})|^{2q'}dy\right)^{1/q'}.
\end{aligned}
$$
As $u\in L^{p}(\R\setminus (a,b))$, we know that  
$$
\|f(\tilde{u})\|_{L^{p}(x-\frac{1}{\delta}, x+\frac{1}{\delta})} \to 0\;\;\mbox{as}\;\;|x|\to +\infty.
$$
Therefore, there are  $T>0$ such that
\begin{equation} \label{A20}
\int_{x-\frac{1}{\delta}}^{x+\frac{1}{\delta}}\mathcal{K}(x-y)Q(y)f(\tilde{u})dy\leq \delta, \quad \forall  |x|\geq T.
\end{equation}
From (\ref{A19}) and (\ref{A20}),
\begin{equation} \label{A21}
\int_{\mathbb{R}}\mathcal{K}(x-y)Q(y)f(\tilde{u})dy\leq C_1\delta +\delta, \quad \forall |x|\geq T.
\end{equation}
Since $\delta$ is arbitrary, the proof is finished. 
\end{proof}

Now we are able to prove the following lemma 
\begin{lemma}\label{decay}
$$
u(x)\to 0\;\;\mbox{as}\;\;|x|\to \infty.
$$
\end{lemma}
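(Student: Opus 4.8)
The plan is to compare $u$ with the function $v$ of \eqref{A14}: I would show that $u$ equals $v$ plus a term that is ``generated'' on the hole $(a,b)$ and hence decays at infinity, and then invoke Lemma \ref{ZERO}.

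First I would check that $u$, regarded as a function on all of $\R$, solves the linear equation of \eqref{A13} on $\R$ with a compactly supported extra forcing term. On $\R\setminus(a,b)$ this is just the equation of $(P)$: $u$ solves there $\mathcal L u + u = Q(x)f(u) = g(x)$, where $\mathcal L$ denotes the (fractional) operator of \eqref{A13} and $g$ is the right--hand side of \eqref{A13} (recall $\tilde u=u$ on $\R\setminus(a,b)$, while $g\equiv 0$ on $(a,b)$ since $f(0)=0$). For $x\in(a,b)$ the nonlocal Neumann condition $\mathcal N_{1/2}u(x)=0$ means exactly that the long--range part $\tfrac1\pi\int_{\R\setminus(a,b)}\tfrac{u(x)-u(y)}{|x-y|^{2}}\,dy$ of the fractional operator applied to $u$ at $x$ vanishes, so that there $\mathcal L u(x)$ reduces to a purely regional expression in $u|_{(a,b)}$; moreover $u$ is bounded on $(a,b)$, because $\mathcal N_{1/2}u=0$ gives the averaging identity
$$
u(x)=\Big(\int_{\R\setminus(a,b)}\tfrac{dy}{|x-y|^{2}}\Big)^{-1}\int_{\R\setminus(a,b)}\tfrac{u(y)}{|x-y|^{2}}\,dy,\qquad x\in(a,b),
$$
whence $|u(x)|\le\|u\|_{L^{\infty}(\R\setminus(a,b))}$ on $(a,b)$ by Lemma \ref{Rlm01}. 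It follows that the function $R:=\mathcal L u + u - g$ vanishes identically on $\R\setminus(a,b)$ and is supported in $[a,b]$, so that $u$ solves $\mathcal L u + u = g + R$ in $\R$, with $R$ localized near the hole.

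The rest is routine. Since $\mathcal L+1$ admits the convolution (Green's) representation used in \eqref{A14} and $g,R$ are admissible data (as noted, $g\in L^{2}(\R)$, and $R$ is compactly supported), the function $u-\mathcal K*(g+R)$ solves the homogeneous equation $\mathcal L\psi+\psi=0$ in $\R$ and is controlled at infinity; by the Liouville property of $\mathcal L+1$ it must vanish, hence
$$
u=\mathcal K*(g+R)=\mathcal K*g+\mathcal K*R=v+\mathcal K*R .
$$
Finally, for $|x|>|a|+|b|+1$ and $y\in[a,b]$ one has $|x-y|\ge |x|-|a|-|b|$, so by $(K_2)$
$$
|(\mathcal K*R)(x)|\ \le\ \int_{a}^{b}\mathcal K(x-y)\,|R(y)|\,dy\ \le\ \frac{C}{\big(|x|-|a|-|b|\big)^{2}}\ \longrightarrow\ 0\qquad\text{as }|x|\to\infty ,
$$
and combining this with Lemma \ref{ZERO} we get $u(x)=v(x)+(\mathcal K*R)(x)\to 0$ as $|x|\to\infty$.

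The hard part is the first step: turning the nonlocal Neumann condition $\mathcal N_{1/2}u=0$ into the clean statement that $R=\mathcal L u+u-g$ is a well--behaved, compactly supported function (or at worst a compactly supported distribution) on $\R$. This requires a little regularity of $u$ up to $\partial(a,b)$, which one extracts from the averaging identity above together with the $L^{\infty}$ bound of Lemma \ref{Rlm01}, and a careful choice of the function spaces in which the representation/Liouville argument is run. Once the decomposition $u=v+\mathcal K*R$ is available, the conclusion follows immediately from Lemma \ref{ZERO} and the kernel bounds $(K_1)$--$(K_4)$.
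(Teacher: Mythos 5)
Your route is genuinely different from the paper's, and as written it has a gap at exactly its crucial step. The paper never extends the equation across $(a,b)$: it takes the solution $v$ of \eqref{A13}, uses Lemma \ref{continuity} and the positivity of $v$ to choose $C\gg 1$ with $V=Cv\geq 1+\|u\|_{L^{\infty}(\R\setminus(a,b))}$ on a compact interval $(-T,T)\supset(a,b)$, observes that $CQ f(\tilde u)\geq Qf(u)$, and tests the difference of the two weak formulations with $\varphi=(u-V)^{+}$ cut off outside $(-T,T)$ to conclude $(u-V)^{+}\equiv 0$ there, hence $u\leq V\to 0$ by Lemma \ref{ZERO}. That comparison argument only uses the weak (form-level) formulation and requires no regularity of $u$ inside $(a,b)$.

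Your argument, by contrast, stands or falls with the claim that $R:=\mathcal{L}u+u-g$ is a compactly supported object regular enough that (i) the representation $u=\mathcal{K}*(g+R)$ holds and (ii) $|(\mathcal{K}*R)(x)|\leq\int_{a}^{b}\mathcal{K}(x-y)|R(y)|\,dy$, which already presupposes $R\in L^{1}(a,b)$ (or at least a finite measure). For $x\in(a,b)$, after the Neumann condition removes the long-range part, $R(x)$ is essentially the regional principal-value integral of $u$ over $(a,b)$ plus $u(x)$; nothing you have established --- the averaging identity and the $L^{\infty}$ bound of Lemma \ref{Rlm01} --- gives the interior $C^{1}$-type regularity needed for that principal value to be finite, nor any integrability of it up to the endpoints $a,b$, where for $s=1/2$ derivative blow-up is exactly what one expects. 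You label this "the hard part" and leave it unproved, so the proof is incomplete precisely where the difficulty lies; if $R$ is only a compactly supported distribution of unknown order, the final kernel estimate via $(K_2)$ and the Liouville step (which needs $u-\mathcal{K}*(g+R)$ to lie in $L^{2}(\R)$ or $H^{1/2}(\R)$) no longer follow. There is also a normalization mismatch that your argument, unlike the paper's, cannot ignore: the weak form of $(P)$ (the form $\frac{1}{2\pi}\iint_{\R^{2}\setminus(a,b)^{2}}$ together with $\mathcal{N}_{1/2}u=0$) corresponds to the full $(-\Delta)^{1/2}$ in $\R\setminus(a,b)$, while \eqref{A13} and the kernel $\mathcal{K}$ of \eqref{A14}--\eqref{A15} are built for $\tfrac12(-\Delta)^{1/2}+1$. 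The paper's comparison only needs inequalities between the two quadratic forms, but your representation formula needs the exact operator identity; with the mismatch, $\mathcal{L}u+u-g$ does not vanish off $(a,b)$, so $R$ is not compactly supported as claimed unless you rebuild $v$, $\mathcal{K}$ and Lemma \ref{ZERO} for the correctly normalized operator.
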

\begin{proof}
Let $v$ be the positive solution of the linear problem (\ref{A13}) and $T>0$ such that 
$$
(a,b)\subset (-T,T).
$$ 
Then there exists $C \gg 1$ such that 
$$
V(x) = Cv(x) \geq 1+\|u\|_{L^\infty(\R \setminus (a,b))}, \quad \mbox{for} \quad |x| \leq T.
$$
Moreover, $V$ is solution of the problem 
\begin{equation}\label{A22} 
\frac{1}{2}(-\Delta)^{1/2} V + V = CQ(x)f(\tilde{u}(x))\;\;\mbox{in}\;\;\R
\end{equation} 
and 
\begin{equation}\label{A23}
V(x) \to 0\;\;\mbox{as}\;\;|x| \to \infty.
\end{equation}
Let 
$$
\varphi (x) = \begin{cases}
(u-V)^+(x),&x\in (-T,T)^c\\
0,&x\in (-T,T).
\end{cases}
$$
We claim that 
\begin{equation}\label{A24}
\varphi \equiv 0.
\end{equation}
Assuming for a moment that (\ref{A24}) is true, we have  
$$
u(x)\leq V(x)\;\;\mbox{a.e.}\;\;x\in \R \setminus (-T,T),
$$
and by (\ref{A23}),  
\begin{equation}\label{A25}
u(x) \to 0\;\;\mbox{as}\;\;|x|\to \infty,
\end{equation}
showing the lemma.

\noindent 
{\bf Proof of the claim.} Since $\varphi \in H^{1/2}(\R)$ and $u$ is solution of problem $(P)$, we have 
\begin{equation}\label{A26}
\frac{1}{2}\iint_{\R^{2} \setminus (a,b)^2}\frac{[u(x)-u(y)][\varphi (x) - \varphi (y)]}{|x-y|^{2}}dy dx + \int_{\R \setminus (a,b)} u(x)\varphi (x)dx = \int_{\R \setminus (a,b)} Q(x)f(u)\varphi (x)dx.
\end{equation}
Moreover, since $V$ is solution of (\ref{A22}), we also have  
$$
\frac{1}{2}\iint_{\R^{2}} \frac{[V(x) - V(y)][\varphi (x) - \varphi (y)]}{|x-y|^{2}}dy dx + \int_{\R} V(x)\varphi (x)dx = \int_{\R} CQ(x)f(\tilde{u}(x))\varphi (x)dx.
$$
Recalling that
$$
 \int_{\R} CQ(x)f(\tilde{u}(x))\varphi (x)dx =\int_{\R \setminus (a,b)} CQ(x)f(\tilde{u}(x))\varphi (x)dx= \int_{\R \setminus (a,b)} CQ(x)f(u(x))\varphi (x)dx,
$$
it follows that
\begin{equation}\label{A27}
\frac{1}{2}\iint_{\R^{2}} \frac{[V(x) - V(y)][\varphi (x) - \varphi (y)]}{|x-y|^{2}}dy dx + \int_{\R \setminus (a,b)} V(x)\varphi (x)dx = \int_{\R \setminus (a,b)} CQ(x)f(u(x))\varphi (x)dx.
\end{equation}
Now by subtracting (\ref{A26}) with (\ref{A27}), we find 
$$
\frac{1}{2}\iint_{\R^{2}\setminus (-T,T)^2}\frac{[(u-V)(x) - (u-V)(y)][\varphi (x) - \varphi (y)]}{|x-y|^{2}}dy dx + \int_{\R \setminus (-T,T)} (u-V)(x)\varphi (x)dx \leq 0.
$$
Using the fact that $V(x) \geq u(x)$ for $x \in (-T,T)$, it is easy to check that 
$$
[(u-V)(x) - (u-V)(y)][\varphi (x) - \varphi (y)]\geq 0, \quad (x,y) \in \R^{2}\setminus (-T,T)^2.
$$
Thus, as $(\R\setminus (-T,T))^{2} \subset \R^{2}\setminus (-T,T)^2$, we get
$$
\frac{1}{2}\iint_{(\R\setminus (-T,T))^{2}} \frac{|(u-V)^{+}(x) - (u-V)^+(y)|^2}{|x-y|^{2}}dy dx + \int_{\R\setminus (-T,T)} [(u-V)^+]^2(x)dx \leq 0,
$$
leading to $(u-V)^+ \equiv 0$.
\end{proof}

\begin{lemma}\label{decay2}
There exists $C>0$ such that 
$$
0\leq u(x) \leq \frac{C}{|x|^{2}}, \quad \forall  x \in \mathbb{R}\setminus \{0\}.
$$
\end{lemma}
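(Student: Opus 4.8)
The plan is to transfer the pointwise decay of the Bessel-type kernel $\mathcal{K}$ to $u$, exploiting the comparison $u\le Cv$ already produced in the proof of Lemma~\ref{decay}, where $v$ solves the linear problem \eqref{A13}. First I would upgrade that comparison to all of $\R$: by Lemma~\ref{continuity} $v$ is continuous, and it is strictly positive, so $m:=\min_{|x|\le T}v(x)>0$; since $u$ is bounded (Lemma~\ref{Rlm01}), enlarging $C$ gives $\tilde u(x)\le Cv(x)$ for every $x\in\R$ (on $(a,b)$ this is trivial, as $\tilde u=0$ there). Hence it suffices to show $v(x)\le C'/|x|^{2}$ for $|x|\ge1$, the range $0<|x|<1$ being handled by $u(x)\le\|u\|_{\infty}\le\|u\|_{\infty}/|x|^{2}$.

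To estimate $v$ I would use the representation $v=\mathcal{K}*g$ with $g=Qf(\tilde u)\ge0$, and the observation that $g$ becomes small relative to $v$ far from the origin: $f$ is increasing on $(0,\infty)$ by $(f_4)$, with $f(s)/s\to0$ as $s\to0^{+}$ by $(f_3)$ (since $q>1$), and $\tilde u\le Cv\to0$ at infinity by Lemma~\ref{ZERO}; therefore for each $\epsilon>0$ there is $R_\epsilon>0$ with $g(x)\le\|Q\|_{\infty}f(Cv(x))\le\epsilon v(x)$ for $|x|\ge R_\epsilon$. Writing $N_0:=\|g\|_{\infty}\chi_{\{|x|\le R_\epsilon\}}$ we get $g\le\epsilon v+N_0$ on $\R$, and convolving with $\mathcal{K}$ yields the key inequality $v\le\epsilon\,\mathcal{K}*v+\mathcal{K}*N_0$. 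I would then record the elementary kernel facts needed: $\mathcal{K}\in L^{1}(\R)\cap L^{2}(\R)$ by $(K_4)$, so (since $g\in L^{2}$, because $|g|\le\|Q\|_{\infty}|f(\tilde u)|\le C\|Q\|_{\infty}|\tilde u|$ with $\tilde u\in L^{2}$) Young's inequality gives $v\in L^{\infty}(\R)$; moreover, on the weighted space $X:=\{h:\ \|h\|_{X}:=\sup_{x}\langle x\rangle^{2}|h(x)|<\infty\}$ one has $N_0\in X$ and convolution with $\mathcal{K}$ maps $X$ into itself with $\|\mathcal{K}*h\|_{X}\le\Lambda\|h\|_{X}$ for some $\Lambda$ independent of $\epsilon$ — this is a routine split of $\int\mathcal{K}(x-y)\langle y\rangle^{-2}\,dy$ into $\{|x-y|\le|x|/2\}$ (use $\|\mathcal{K}\|_{L^{1}}$) and $\{|x-y|>|x|/2\}$ (use $(K_2)$ and $\int\langle y\rangle^{-2}\,dy<\infty$), valid precisely because the decay exponent $2$ exceeds the dimension $1$.

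Finally I would fix $\epsilon$ small enough that $\epsilon\|\mathcal{K}\|_{L^{1}}<1$ and $\epsilon\Lambda<1$, and iterate the key inequality: $v\le\epsilon^{k}\mathcal{K}^{*k}*v+\sum_{j=0}^{k-1}\epsilon^{j}\mathcal{K}^{*j}*N_0$. The first term tends to $0$ uniformly as $k\to\infty$ since $\|\mathcal{K}^{*k}\|_{L^{1}}=\|\mathcal{K}\|_{L^{1}}^{k}$ and $v\in L^{\infty}$, while the series $\sum_{j}\epsilon^{j}\mathcal{K}^{*j}*N_0$ converges in $X$ because $\|\mathcal{K}^{*j}*N_0\|_{X}\le\Lambda^{j}\|N_0\|_{X}$; hence $v\in X$, i.e. $v(x)\le C'/|x|^{2}$, and consequently $0\le u(x)\le Cv(x)\le C''/|x|^{2}$. (Alternatively, one can run a comparison argument for $v$ against a suitable multiple of the Bessel kernel of $\tfrac12(-\Delta)^{1/2}+(1-\epsilon)$ on the exterior region $\{|x|>R_\epsilon\}$, invoking the maximum principle for the fractional Laplacian plus a positive zeroth-order term and the same cut-off/test-function device as in the proof of Lemma~\ref{decay}.) I expect the only genuinely delicate step to be making precise the two kernel estimates — that $\mathcal{K}*N_0$ inherits $|x|^{-2}$ decay and that $\mathcal{K}*$ is bounded on $X$ — together with the bookkeeping in the Neumann iteration; the smallness of $g$ relative to $v$ at infinity, by contrast, is an immediate consequence of $(f_3)$, $(f_4)$ and Lemma~\ref{ZERO}.
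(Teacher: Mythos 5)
Your argument is correct, but it follows a genuinely different route from the paper. The paper does not touch the integral representation at this stage: it imports from \cite[Lemma 4.3]{PFAQJT} a smooth barrier $w$ with $\tfrac12(-\Delta)^{1/2}w+\tfrac12 w\ge 0$ outside a compact set and $0<w(x)\le k_1|x|^{-2}$, uses the already-proved decay $u(x)\to 0$ to get $Q(x)f(u)\le \tfrac12 u$ for $|x|\ge T$, and then repeats the $(u-w)^+$ test-function comparison of Lemma \ref{decay} to conclude $u\le w$ outside $(-T,T)$; the decay of $u$ is thus obtained by a weak maximum-principle comparison against an explicit supersolution. You instead stay with the auxiliary linear solution $v=\mathcal{K}\ast g$, exploit the comparison $u\le Cv$ already extracted from the proof of Lemma \ref{decay}, show via $(f_3)$, $(f_4)$ and Lemma \ref{ZERO} that $g\le \epsilon v+N_0$ with $N_0$ bounded and compactly supported, and then run a Neumann-series bootstrap in the weighted space $X=\{h:\sup_x\langle x\rangle^{2}|h(x)|<\infty\}$, using $(K_2)$, $(K_4)$ and the fact that the decay exponent $2$ exceeds the dimension $1$ to make $\mathcal{K}\ast$ bounded on $X$. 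What each buys: the paper's proof is short once the barrier of Felmer--Quaas--Tan is taken for granted, and it needs nothing about $v$ beyond Lemma \ref{decay}; your proof avoids importing the barrier altogether, is self-contained given $(K_1)$--$(K_4)$, and yields the slightly stronger conclusion that the linear solution $v$ itself decays like $\langle x\rangle^{-2}$ (hence so does $u\le Cv$), at the cost of the weighted-norm bookkeeping. Two small remarks: your iterated inequality should read $v\le \epsilon^{k}\mathcal{K}^{\ast k}\ast v+\sum_{j=0}^{k-1}\epsilon^{j}\mathcal{K}^{\ast(j+1)}\ast N_0$ (an immaterial off-by-one in the kernel power), and the monotonicity of $f$ on $(0,\infty)$ that you invoke from $(f_4)$ uses additionally $f>0$ on $(0,\infty)$, which follows from $(f_2)$; neither point affects the validity of the argument.
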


\begin{proof}
Arguing as in \cite[Lemma 4.3]{PFAQJT}, it is possible to prove that there is a smooth function $w$ in $\R$ satisfying  
\begin{equation}\label{A28}
\frac{1}{2}(-\Delta)^{1/2}w(x) + \frac{1}{2}w(x) \geq  0 \quad \mbox{for}\;\;|x|>T
\end{equation} 
in the classical sense, where $R$ is fixed of a way such that $ (a,b) \subset (-T,T)$, and   
\begin{equation}\label{A29}
0< w(x) \leq \frac{k_1}{|x|^{2}}, \quad \forall x \in \mathbb{R} \setminus \{0\},
\end{equation}
Note that (\ref{A28}) is equivalent to
\begin{equation}\label{A30}
\frac{1}{2}\iint_{\R^{2}} \frac{[w(x)-w(y)][\phi(x)-\phi(y)]}{|x-y|^{2}}dy dx + \frac{1}{2}\int_{\R} w(x)\phi(x)dx \geq  0,
\end{equation}
for all $\phi \in H^{1/2}(\R)$ with $\phi \geq 0$ and $supp \phi \subset (-T,T)^c$. Without loss of generality, we can assume that 
$$
w(x) \geq 1+\|u\|_{L^{\infty}(\R\setminus \Omega)} \;\;\mbox{for}\;\;|x| \leq T.
$$
Note that, by (\ref{A25}), there is $T>0$ large enough such that 
\begin{equation}\label{A31}
u(x)\left( Q(x)\frac{f(u(x))}{u(x)} - \frac{1}{2}\right) \leq 0,\;\;\mbox{for}\;\;|x|\geq T.
\end{equation} 
As in the last lemma, considering the function 
$$
\varphi (x) = \begin{cases}
(u-w)^+(x),&x\in \R \setminus (-T,T)\\
0,&x\in (-T,T).
\end{cases}
$$ 
it follows from  (\ref{A31}), 
\begin{equation}\label{A32}
\frac{1}{2}\iint_{\R^{2}\setminus (-T,T)^2} \frac{[u(x) - u(y)][\varphi (x) -\varphi (y)]}{|x-y|^{2}}dy dx + \frac{1}{2}\int_{\R \setminus (-T,T)} u(x)\varphi (x)dx\leq 0.
\end{equation} 
Therefore, from (\ref{A30}) and (\ref{A32}),   
\begin{equation}\label{A27}
\frac{1}{2}\iint_{\R^{2}\setminus (-T,T)^2} \frac{[(u-w)(x) - (u-w)(y)][\varphi (x) - \varphi (y)]}{|x-y|^{2}}dy dx + \frac{1}{2}\int_{\R\setminus (-T,T)} (u-w)(x)\varphi (x)dx \leq 0.
\end{equation}
Arguing as in Lemma \ref{decay}, we find
$$
\frac{1}{2}\iint_{(\R\setminus (-T,T))^{2}} \frac{|(u-w)^+(x) - (u-w)^+(y)|^2}{|x-y|^{2}}dy dx + \frac{1}{2}\int_{\R \setminus (-T,T)} [(u-w)^+(x)]^2dx \leq 0.
$$
that is, $(u-w)^+ \equiv 0$. Therefore,   
\begin{equation}\label{A28}
u(x) \leq w(x) \leq \frac{k_2}{|x|^{2}}\;\;\mbox{for all}\;\;x\in \R \setminus (-T,T).
\end{equation}
Now, the result follows by using the fact that $u \in L^{\infty}(\mathbb{R})$.
\end{proof}



\begin{thebibliography}{1} 

\bibitem{CA16}C. O. Alves, {\it Existence of a positive solution for a nonlinear elliptic equation with saddle-like potential and nonlinearity with exponential critical growth in $\R^2$}, Milan J. Math. {\bf 84}, 1-22 (2016).

\bibitem{CA}C.O. Alves, {\it Multiplicity of solutions for a class of elliptic problem in $\R^2$ with Neumann conditions,} J. Differential Equation {\bf 219}, 20-39 (2005).


\bibitem{CAPCEM}C.O. Alves, P.C. Carri\~ao and E.S. Medeiros, {\it Multiplicity of solutions for a class of quasilinear problem in exterior domains with Neumann conditions}, Abstract Appl. Anal. {\bf 3}, 251-268 (2004).

\bibitem{AlvesFG} C.O. Alves, G. M. Figueiredo and G. Siciliano, {\it Ground state solutions for fractional scalar field equations under a general critical nonlinearity}, to appear in Comm. Pure Appl. Ana


\bibitem{Alves16}C.O. Alves, J.M. do \'O and O.H. Miyagaki, {\it Concentration phenomena for fractional elliptic equations involving exponential critical growth}, Adv. Nonlinear Stud. {\bf 16}, No. 4, 843-861 (2016).

\bibitem{CACT}C.O. Alves, G.M. Bisci and C. Torres, {\it Existence of positive solutions for a class of fractional elliptic problem in exterior domain}, {arXiv:1812.04878v1 [math.AP] 12 Dec 2018}.

\bibitem{CACT2}C.O. Alves and C. Torres, {\it Fractional elliptic problem in exterior domains with nonlocal Neumann boundary condition}, {arXiv:1812.04881v1 [math.AP] 12 Dec 2018}.


\bibitem{CAOM}C. O. Alves and O. H. Miyagaki, {\it Existence and concentration of solution for a class of fractional elliptic equation in $\R^N$ via penalization method}, Calc. Var. {\bf 55}, 47 (2016) .  

\bibitem{AlvesAmbrosio} C. O. Alves and V. Ambrosio, {\it A multiplicity result for a nonlinear fractional Schr\"odinger equation in $\R^N$ without the Ambrosetti-Rabinowitz condition}, J. Math. Anal. Appl. {\bf 466}, 498-522  (2018).

\bibitem{AlvesSouto} C. O. Alves and M. A. S. Souto, {\it Existence of least energy nodal solution for a Schr\"odinger-Poisson system in bounded domains}, Z. Angew. Math. Phys., 65 (2014), 1153-1166.

\bibitem{VA}V. Ambrosio, {\it On a fractional magnetic Schr\"odinger equation in $\R$ with exponential critical growth}, {arXiv:1810.04560v1 [math.AP] 9 Oct 2018}.

\bibitem{BartschW05}T. Bartsch, T. Weth and M. Willem, {\it Partial symmetry of least energy nodal solutions to some variational problems}, J. Anal. Math. {\bf 96}, 1-18 (2005).

\bibitem{VBGC}V. Benci and G. Cerami, {\it Positive solutions of some nonlinear elliptic problems in exterior domains}, Arch. Rational Mech. Anal. {\bf 99}, 283-300 (1987).


\bibitem{Bucurb}C. Bucur and E. Valdinoci, {\it Nonlocal Diffusion and Applications}, Springer International Publishing Switzerland 2016. 

\bibitem{DC}D. Cao, {\it Multiple solutions for a Neumann problem in an exterior domain}, Commun. Partial
Differential Equations {\bf 18}, 687-700 (1993).

\bibitem{Chang14}X. Chang and Z. Wang, {\it Nodal and multiple solutions of nonlinear problems involving the fractional Laplacian}, J. Differential Equations {\bf 256}, 2965-2992 (2014).

\bibitem{Chen}G. Chen, {\it Singularly perturbed Neumann problem for fractional Schr\"odinger equations}, Science China Mathematics, {\bf 61}, 4, 695-708 (2018).

\bibitem{FDGD}F. Demengel and G. Demengel, {\it Functional Spaces for Theory of Elliptic Partial Differential Equations}, Springer - Verlag London Limited 2012. 

\bibitem{Nez12}E. Di Nezza, G. Palatucci and E. Valdinoci, \textit{Hitchhiker's guide to the fractional Sobolev spaces}, Bull. Sci. Math. {\bf 136}, 521-573 (2012).

\bibitem{Dipierrob}S. Dipierro, M. Medina and E. Valdinoci, {\it Fractional Elliptic Problems
with Critical Growth in the whole of $\R^N$}, Lecture Notes. Scuola Normale Superiore di Pisa (New Series), 15. Edizioni della Normale, Pisa, 2017.

\bibitem{SDXREV}S. Dipierro, X. Ros-Oton and E. Valdinoci, {\it Nonlocal problems with Neumann boundary conditions.} Rev. Mat. Iberoam. {\bf 33}, 2, 377-416 (2017).

\bibitem{ME}M. Esteban,{\it Nonsymmetric ground state of symmetric variational problems}, Comm. Pure Appl. Math. {\bf XLIV}, 259-274 (1991).

\bibitem{PFAQJT}P. Felmer, A. Quaas and J. Tan, {\it Positive solutions of nonlinear Schr\"odinger equation with the fractional laplacian}, Proceedings of the Royal Society of Edinburgh: Section A Mathematics, {\bf 142}, No 6, 1237-1262 (2012).

\bibitem{RFEL}R. Frank and E. Lenzmann, \textit{Uniqueness of non-linear ground states for fractional Laplacians in $\R$}, Acta Math. {\bf 210}, 261-318 (2013).

\bibitem{FLS} R.L. Frank, E. Lenzmann and L. Silvestre, {\it Uniqueness of Radial Solutions for the Fractional Laplacian}, Commun. Pure Appl. Math.  https://doi.org/10.1002/cpa.21591.

\bibitem{Iannizzoto}A. Iannizzotto and M. Squassina, {\it $1/2$-Laplacian problems with exponential nonlinearity}, J. Math. Anal. Appl. {\bf 414}, No. 1, 372-385 (2014).

\bibitem{AISMMS}A. Iannizzotto, S. Mosconi and M. Squassina, {\it $H^s$ versus $C^0$ - weighted minimizers}, Nonlinear Differ. Equ. Appl. {\bf 22}, 477-497  (2015). 


\bibitem{Ko06}H. Kozono, T. Sato, and H. Wadade, \textit{Upper bound of the best constant of a Trudinger-Moser inequality and its application to a Gagliardo-Nirenberg inequality}, Indiana Univ. Math. J. {\bf 55}, 1951-1974 (2006).

\bibitem{Iu16}S. Lula, A. Maalaoui and L. Martinazzi, \textit{A fractional Moser-Trudinger type inequality in one dimension and its critical point}, Diff. and Integral Equations, {\bf 29}, 455-492 (2016).

\bibitem{CM}C. Miranda, {\it Un'osservazione sul teorema di Brouwer}, Boll. Unione Mat. Ital. Ser. II, Anno III, n. 1 {\bf 19}, 5-7 (1940).

\bibitem{Molicab}G. Molica Bisci, V. R\u{a}dulescu and R. Servadei, {\it Variational Methods for Nonlocal Fractional Problems}, University Printing House, Cambridge CB2 8BS, United Kingdom 2016. 

\bibitem{Ozawa}T. Ozawa, \textit{On critical cases of Sobolev's inequalities}, J. Funct. Anal. {\bf 127}, 259-269 (1995).

\bibitem{doO}J.M. do \'O, O.H. Miyagaki and M. Squassina, {\it Nonautonomous fractional problems with exponential growth}, NoDEA Nonlinear Differential Equations Appl. {\bf 22}, No. 5, 1395-1410 (2015).

\bibitem{CP}C. Pozrikidis, {\it The Fractional Laplacian}, Taylor \& Francis Group, LLC 2016. 

\bibitem{Souza16}M. Souza and Y. Ara\'ujo, \textit{On nonlinear perturbations of a periodic fractional Schr\"odinger equation with critical exponential growth}, Math. Nachr. {\bf 289}, 610-625 (2016).

\bibitem{KTKWRW}K. Teng, K. Wang and R. Wang, {\it A sign-changing solution for nonlinear problems involving the fractional Laplacian}, Electronic Journal of Differential Equations, Vol. 2015, No. 109, pp. 1-12  (2015).

\bibitem{MW}M. Willem, {\it Minimax Theorems}, Birkh\"auser, Boston, 1996.

\end{thebibliography}
\end{document}